\definecolor{darkred}{rgb}{.7,0,0}
\definecolor{red}{rgb}{0,0,0}
\definecolor{RED}{rgb}{0,0,0}
\definecolor{green}{rgb}{0,0.7,0}
\newtheorem{assumption}{Assumption}
\newcommand{\supp}{\operatorname{supp}}
\newcommand{\M}{\mathcal{M}}
\newcommand{\na}{\nabla}
\newcommand{\pa}{\partial}
\newcommand{\eps}{\varepsilon}
\newcommand{\Om}{\Omega}
\newcommand{\IOm}{I\times \Om}
\newcommand{\vertiii}[1]{{\left\vert\kern-0.25ex\left\vert\kern-0.25ex\left\vert #1
    \right\vert\kern-0.25ex\right\vert\kern-0.25ex\right\vert}}
\newcommand{\norm}[1]{\left\lVert#1\right\rVert}
\newcommand{\mnorm}[1]{\norm{#1}_{\M(\Omega)}}
\newcommand{\ltwonorm}[1]{\left\lVert#1\right\rVert_{L^2(\Omega)}}
\newcommand{\linfnorm}[1]{\lVert#1\rVert_{L^\infty(\Omega)}}
\newcommand{\abs}[1]{\lvert#1\rvert}
\newcommand{\lh}{\abs{\ln{h}}}
\newcommand{\lk}{\ln{\frac{T}{k}}}
\newcommand{\Xk}{X_k^r}
\newcommand{\Xkhs}{X^{r,s}_{k,h}}
\DeclareMathOperator*{\dist}{dist}
\newcommand{\tXk}{\widetilde X_k^r}
\newcommand{\hXk}{\widehat X_k^r}
\newcommand{\ttm}{t_{\tilde{m}}}
\newcommand{\Ltwo}{{L^2(\Omega)}}
\newcommand{\Xkr}{X_{k}^{r}}
\newcommand{\Xkhrone}{X_{k,h}^{r,1}}
\newcommand{\Xkhrtwo}{X_{k,h}^{r,2}}
\definecolor{darkgreen}{RGB}{64,191,64}
\definecolor{green}{rgb}{0,0.7,0}
\begin{document}
%
\author{Dmitriy Leykekhman}\address{Department of Mathematics,
               University of Connecticut,
              Storrs,
              CT~06269, USA (dmitriy.leykekhman@uconn.edu).}
\author{Boris Vexler}\address{Chair of Optimal Control, Technical University of Munich,
  School of Computation Information and Technology,
Department of Mathematics , Boltzmannstra{\ss}e 3, 85748 Garching b. Munich, Germany
(vexler@tum.de).}
\author{Jakob Wagner}\address{Chair of Optimal Control, Technical University of Munich,
  School of Computation Information and Technology,
Department of Mathematics , Boltzmannstra{\ss}e 3, 85748 Garching b. Munich, Germany
(wagnerja@cit.tum.de) ORCID: \url{https://orcid.org/0000-0001-8510-9790}.}

\begin{abstract}
  In this paper we analyze a homogeneous parabolic problem with initial 
  data in the space of regular Borel measures.
  The problem is discretized in time with a discontinuous Galerkin scheme of arbitrary degree
  and in space with 
  continuous finite elements of orders one or two.
  We show parabolic smoothing results for the continuous, semidiscrete and fully discrete problems.
  Our main results are interior $L^\infty$ error estimates for the evaluation at the endtime, in cases
  where the initial data is supported in a subdomain.
  In order to obtain these, we additionally show interior $L^\infty$ error estimates for $L^2$ initial data
  and quadratic finite elements, which extends the corresponding result previously established by the 
  authors for linear finite elements.
\end{abstract}
\subjclass{65N30,65N15}
\keywords{optimal control, sparse control, initial data identification, smoothing estimates, 
    parabolic problems, finite elements, discontinuous Galerkin, error estimates, pointwise error estimates}
    \title{Fully discrete \textcolor{red}{pointwise} smoothing error estimates for measure valued initial data}
\maketitle

\section{Introduction}
In this work we discuss smoothing properties of the fully discrete approximation of the homogeneous parabolic problem 
\begin{equation}\label{eq:eq_aux}
\begin{aligned}
    \pa_t v-\Delta v &= 0 &&\text{in} \quad \textcolor{red}{I}\times \Om,\;  \\
    v                &= 0 &&\text{on}\quad  \textcolor{red}{I}\times\pa\Omega, \\
v(0)             &= v_0 &&\text{in}\quad \Omega,
\end{aligned}
\end{equation}
where $\Omega \subset \mathbb{R}^N$, $N=2,3$ is a bounded, convex, polygonal/polyhedral domain,
\textcolor{red}{and $I=(0,T]$ a bounded time interval.}
In particular we are interested in pointwise error estimates  in the case when the initial condition $v_0$ is a regular Borel measure $v_0\in \mathcal{M}(\Om)$ supported in some subdomain $\Om_0$ such that
$\overline \Omega_0 \subset \Omega$,
for example a linear combination of Dirac delta functions, 
$v_0=\sum_j \beta_j\delta_{x_j}$. Our main result of this paper establishes the fully discrete error estimate of the form
\begin{equation}\label{eq: smoothing global measure}
\|(v-v_{kh})(T)\|_{L^\infty(\Om_0)}\le C(\Omega_0,T)\left({k^{2r+1}}+\ell_{kh}{h^{s+1}}\right)\|{v_0}\|_{\mathcal{M}(\Om)}.
\end{equation}
Here $r\geq 0$ is the order of time discretization, $s=1,2$ is the order of the space discretization, and $\ell_{kh}$ is a logarithmic term that depends on the mesh size $h$ and the maximum time step $k$.
In order to simplify the presentation, we assume that $v_0$ is supported in the same subdomain $\Omega_0$,
in which the $L^\infty$ error is estimated, wheareas in general, two different subdomains could be chosen.
We would like to point out that the piecewise linear case $s=1$ does not require any additional smoothness assumptions beyond regularity results available on convex domains $\Om$. The higher order convergence of $s=2$ requires some additional smoothness assumptions, which are available for example on rectangular domains (cf. Section \ref{sec: higher order}). In this case, the logarithmic term $\ell_{kh}$ only depends
on $k$.

The above problem is classical and many important results are available in the literature. The $L^2$ theory for a uniform time partition is well presented in the classical textbook of Thome\'e \cite{ThomeeV_2006}. Extensions to variable time steps are available in Eriksson et al. \cite{ErikssonK_JohnsonC_LarssonS_1998a}. The $L^1\to L^\infty$ stability results, are technically more difficult and one of the first papers in this direction was the work of Schatz et al. \cite{AHSchatz_VThomee_LBWahlbin_1980a}, where such results were established in two space dimensions for piecewise linear elements and strongly A-stable single step methods with uniform time steps.  The sharpest result in the case of smooth domains and uniform time steps was obtained by A. Hansbo in \cite{HansboA_2002a}.

In our previous paper \cite{LeyVexWal19}, for piecewise linear space discretizations on a convex polygonal/polyhedral domain $\Omega$ and $v_0\in L^2(\Omega)$, we have obtained
\begin{equation}\label{eq: smoothing global}
\|(v-v_{kh})(T)\|_{L^\infty(\Om_0)}\le C(T)\left({k^{2r+1}}+\ell_{kh}{h^2}\right)\ltwonorm{v_0},
\end{equation}
with explicit form of the constant $C(T)$. Such results were required for obtaining sharp results in  initial data estimation of the parabolic problems from final time observation \cite{LeyVexWal19}. 
However, in order to extend the results  to the situation when the final time observation
is taken at a finite number of points \cite{LeyVexWagl_2023}, which is more relevant in applications, we require the results of the form \eqref{eq: smoothing global measure}.
This yields an error estimate for the adjoint state, which satisfies a backwards-in-time problem,
with a final time condition given by a measure supported in the observation points.
Since these points are fixed, this support is contained in a proper subdomain, and hence
the assumptions for \eqref{eq: smoothing global measure} are satisfied.
In summary, the main contribution of our paper is the establishment of fully discrete error estimates \eqref{eq: smoothing global measure} for Galerkin methods on potentially highly varying time partitions and quasi-uniform meshes on convex polygonal/polyhedral domains, without any additional smoothness assumptions in the case of piecewise linear case and with additional smoothness assumptions in the case of quadratic elements.    

The rest of the paper is organized as follows. In Section \ref{sec: very weak solution}, we review the notion of very weak solutions for parabolic homogeneous problems with initial data given in the space of regular
Borel measures.
In Section \ref{sec:discretization}, we discuss space-time discretization schemes and introduce the semidiscrete and fully discrete Galerkin solutions to  \eqref{eq:eq_aux}. In Section \ref{sec: parabolic smoothing}, we review and show continuous and discrete smoothing estimates for the continuous, semidiscrete and fully discrete solutions. Our main result will be the  pointwise fully discrete error estimate for initial data in $\mathcal{M}(\Om)$, see Theorem \ref{thm:fully discrete from Linfty_to_measure}, which we establish in Section \ref{sec: smoothing error}. Finally,  in Section \ref{sec: higher order} we extend our main result to a higher order space discretization. 

\section{Very weak solutions and regularity}\label{sec: very weak solution}
We begin by introducing the proper setup for the existence and regularity of the solution with measure valued initial data.
\textcolor{red}{Throughout this work, we use standard notations $L^p(\Om)$, $W^{k,p}(\Om)$, $W^{k,p}_0(\Om)$
    for the Lebesque and Sobolev spaces and abbreviate them by $H^k(\Om)$, $H^k_0(\Om)$, in case $p=2$.
    The $L^2(\Om)$ inner product will be denoted by $(\cdot,\cdot)_\Om$.
    We denote the Bochner spaces of $W^{k,p}(\Om)$ valued, $q-$integrable functions over the time interval
    $I$ by $L^q(I;W^{k,p}(\Om))$, and denote by $(\cdot,\cdot)_{I\times \Om}$
    the inner product on $L^2(I;L^2(\Om))\cong L^2(I\times \Om)$. 
    The space $\M(\Om)$ of regular Borel measures can be identified with the dual space
    of $C_0(\Om):=\{v \in C(\bar \Om): v|_{\partial \Om} = 0\}$, i.e. it holds $\M(\Om) \cong (C_0(\Om))^*$.
    The norm on $\M(\Om)$ is then given as 
    $\mnorm{\mu} := \sup_{0 \neq v \in C_0(\Om)} \frac{\langle v,\mu\rangle}{\|v\|_{C_0(\Om)}}$.
    Note that this norm is equivalent to the total variation norm $|\mu|(\Om) = \mu^+(\Om) + \mu^-(\Om)$, where
    $\mu = \mu^+-\mu^-$ is the Jordan decomposition of $\mu \in \M(\Om)$.
    With theses notations fixed, we can state the very weak formulation of \eqref{eq:eq_aux}.
}
\begin{defin}
Let $v_0 \in \M(\Om)$ be given. A function $v \in L^1(\textcolor{red}{I}\times \Om)$ is called a very weak solution to the
  heat equation \eqref{eq:eq_aux}, if 
  \begin{equation}\label{eq:veryweak_heat}
      - \int_{\textcolor{red}{I}\times \Om}
      \left( \frac{\partial \varphi}{\partial t} + \Delta \varphi \right)v \ dx \ dt
    = \int_\Om \varphi(0) \ dv_0 \qquad \text{for all } \varphi \in \Phi_T,
  \end{equation}
  where the space $\Phi_T$ of all test functions is given by
  \begin{equation}
      \Phi_T = \left\lbrace \varphi \in \Phi: \ \varphi(T) = 0 \ \text{in } \Omega \right\rbrace \quad \text{with} \quad \Phi = \left\lbrace \varphi \in L^2(\textcolor{red}{I};H^1_0(\Om)) : \  
        \textcolor{red}{\partial_t} \varphi + \Delta \varphi \in L^\infty(\textcolor{red}{I}\times \Omega)
          \land 
      \varphi(T) \in L^2(\Omega) \right\rbrace.
  \end{equation}
\end{defin}
With this definition, we have the following result (see \cite[Lemma 2.2]{CasasE_VexlerB_ZuazuaE_2015a}):
\begin{theorem}\label{thrm:existence_very_weak_solution}
  For a given $v_0 \in \M(\Om)$, there exists a unique solution $v$ in the sense of \eqref{eq:veryweak_heat}.
  The solution $v$ lies in the space $L^r(\textcolor{red}{I};W^{1,p}_0(\Om))$ for any $p,r \in [1,2)$ with 
  $\frac{2}{r} + \frac{N}{p} > N+1$, with the estimate 
  \begin{equation*}
      \|v\|_{L^r(\textcolor{red}{I};W^{1,p}_0(\Om))} \le C_{r,p} \mnorm{v_0}.
  \end{equation*}
  Moreover, $v \in C([0,T];W^{-1,p}(\Om))$, making the evaluation $v(t)$ well defined at any $t \in [0,T]$,
  and in addition $v(T) \in L^2(\Om)$ with
  \begin{equation}
    \ltwonorm{v(T)} \le C_T \mnorm{v_0}.
  \end{equation}
  For any $\varphi \in \Phi$ there holds
  \begin{equation}
    \int_\Om \varphi(T) v(T) 
    - \int_{(0,T)\times \Om} \left( \textcolor{red}{\partial_t \varphi} + \Delta \varphi \right)v \ dx \ dt
    = \int_\Om \varphi(0) \ dv_0.
  \end{equation}
\end{theorem}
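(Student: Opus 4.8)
The plan is to represent the solution through the Dirichlet heat semigroup and reduce every assertion to parabolic smoothing estimates. Writing $\{S(t)\}_{t\ge 0}$, $S(t)=e^{t\De}$, for the analytic semigroup generated by the Dirichlet Laplacian on $L^q(\Om)$, I would define $v(t):=S(t)v_0$, which is meaningful for $t>0$ since $S(t)$ has a smooth kernel and $\M(\Om)=(C_0(\Om))^*$. The space–time regularity, the bound on $v(T)$, the continuity into $W^{-1,p}(\Om)$ and the identity \eqref{eq:veryweak_heat} all follow once one controls $\|S(t)v_0\|_{W^{1,p}_0(\Om)}$ and $\ltwonorm{S(t)v_0}$ in terms of $\mnorm{v_0}$ and the appropriate negative power of $t$.

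For the gradient bound I would avoid delicate boundary estimates on the heat-kernel gradient and instead factor $S(t)=S(t/2)S(t/2)$. The first factor maps measure data into $L^p$ via the $L^1\to L^p$ kernel bound, $\|S(t/2)v_0\|_{L^p(\Om)}\le C\,t^{-\frac{N}{2}(1-1/p)}\mnorm{v_0}$, while the second gains one derivative through analyticity and the identification $D(A^{1/2})=W^{1,p}_0(\Om)$ with $\|\na S(t/2)w\|_{L^p(\Om)}\le C\,t^{-1/2}\|w\|_{L^p(\Om)}$. Composing yields
\[
  \|S(t)v_0\|_{W^{1,p}_0(\Om)}\le C\,t^{-\alpha}\mnorm{v_0},\qquad
  \alpha=\half+\frac{N}{2}\Big(1-\frac1p\Big).
\]
Raising to the $r$-th power and integrating, $\int_0^T t^{-r\alpha}\,dt$ is finite precisely when $r\alpha<1$, which rearranges to $\frac{2}{r}+\frac{N}{p}>N+1$; this reproduces the stated range of exponents and the bound $\|v\|_{L^r(0,T;W^{1,p}_0(\Om))}\le C_{r,p}\mnorm{v_0}$. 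The endpoint estimate $\ltwonorm{v(T)}=\ltwonorm{S(T)v_0}\le C\,T^{-N/4}\mnorm{v_0}=:C_T\mnorm{v_0}$ is the same $\M\to L^2$ smoothing with the time fixed at $T$.

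To verify that $v=S(\cdot)v_0$ is a very weak solution I would test with $\varphi\in\Phi_T$: integrating by parts in time and using $\frac{d}{dt}\langle S(t)v_0,\psi\rangle=\langle S(t)v_0,\De\psi\rangle$ together with $\varphi(T)=0$ and $S(t)v_0\rightharpoonup v_0$ as $t\to 0^+$ produces exactly \eqref{eq:veryweak_heat}; the continuity $v\in C([0,T];W^{-1,p}(\Om))$ is the strong continuity of $S(\cdot)$ on $W^{-1,p}(\Om)$ after the embedding $\M(\Om)\hookrightarrow W^{-1,p}(\Om)$ (valid since $W^{1,p'}_0(\Om)\hookrightarrow C_0(\Om)$ for $p'>N$), which also makes $v(0)=v_0$ precise. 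For uniqueness I would use transposition: if $w\in L^1((0,T)\times\Om)$ satisfies the homogeneous identity for all $\varphi\in\Phi_T$, then for arbitrary $g\in L^\infty((0,T)\times\Om)$ I solve the backward problem $-\pa_t\varphi-\De\varphi=g$, $\varphi(T)=0$, whose solution lies in $\Phi_T$ by maximal parabolic regularity, obtaining $\int gw=0$; taking $g=\sgn(w)$ forces $w=0$. The identity for general $\varphi\in\Phi$ (without $\varphi(T)=0$) follows from the same integration by parts, now retaining the boundary term $\int_\Om\varphi(T)v(T)$, justified by first taking smooth $v_0$ and passing to the limit with the regularity already established.

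The delicate point is the gradient smoothing estimate on a merely convex polygonal/polyhedral domain. The factorization isolates two facts that must be secured there: the kernel bound giving $\M\to L^p$ smoothing, and the square-root characterization $D(A^{1/2})=W^{1,p}_0(\Om)$ with $\|A^{1/2}\,\cdot\|_{L^p(\Om)}\simeq\|\na\,\cdot\|_{L^p(\Om)}$ for the relevant range $p\in[1,2)$. Both are available from the Gaussian heat-kernel bounds valid on convex domains and from $L^p$ resolvent estimates for the Dirichlet Laplacian, but ensuring the constants are uniform in $t$ and that the exponent $\alpha$ is sharp — rather than relying on global $H^2$ elliptic regularity, which is all that convexity directly grants — is where the real work lies; this is precisely the content imported from \cite{CasasE_VexlerB_ZuazuaE_2015a}.
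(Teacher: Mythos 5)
Your proposal is essentially correct, but it is a genuinely different route from the paper's: the paper gives no proof of Theorem \ref{thrm:existence_very_weak_solution} at all, importing it wholesale from \cite[Lemma 2.2]{CasasE_VexlerB_ZuazuaE_2015a}, where the result is established by transposition/duality arguments (the very weak solution is handled through the backward problem with bounded right-hand side, and the $L^r(W^{1,p}_0)$ regularity by duality-type estimates), not by an explicit semigroup representation. Your construction $v=S(\cdot)v_0$ with the factorization $S(t)=S(t/2)S(t/2)$, the $\M(\Om)\to L^p$ kernel smoothing, and the $t^{-1/2}$ gradient-analyticity bound reproduces exactly the exponent condition $\frac{2}{r}+\frac{N}{p}>N+1$; your uniqueness argument by solving the backward problem with $g\in L^\infty$ and taking $g=\sgn(w)$ is the standard transposition argument and is sound (note the constructed $\varphi$ indeed lies in $\Phi_T$, since $\partial_t\varphi+\Delta\varphi=g\in L^\infty$ by construction); and the extension to $\varphi\in\Phi$ with the boundary term $\int_\Om \varphi(T)v(T)$ via approximation by smooth $v_0$ is correct. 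Your endpoint bound $C_T\sim T^{-N/4}$ is moreover consistent with what the paper later proves by duality in Lemma \ref{lem: L2 smoothign}. What your route buys is a self-contained argument with explicit constants and time dependence for the constant-coefficient heat equation; what it costs is exactly what you flag at the end: on a convex polygonal/polyhedral (hence nonsmooth) domain one must actually secure the gradient bound $\|\nabla S(t)\|_{L^p\to L^p}\le Ct^{-1/2}$ for $p\in[1,N/(N-1))$ and the strong continuity of the dual semigroup on $W^{-1,p}(\Om)$ -- these follow from Gaussian upper bounds plus convexity (or Coulhon--Duong type Riesz-transform bounds for $1<p\le 2$), but they are precisely the technical content that the transposition proof in the cited reference avoids and that your sketch defers rather than supplies. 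With those facts cited or proved, your argument stands as a complete and valid alternative proof.
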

In the second estimate of the above theorem, the constant $C_T$ depends on $T$. We shall make this dependence
more explicit in Lemma \ref{lem: L2 smoothign}. 
For the error analysis below, we will require the following result.
\begin{lemma}\label{lmm:composed_solution}
  Let $\tau \in (0,T)$ and let $v_1$ be the very weak solution of the heat equation on the subinterval
  $(0,\tau)$ in the sense of \eqref{eq:veryweak_heat}.
  Let $v_2$ be the weak solution of the heat equation on the subinterval $(\tau,T)$ with initial
  data $v_1(\tau) \in L^2(\Omega)$. Then $v$ defined as 
  \begin{equation}
    v(t) = \left\lbrace
      \begin{array}{ll}
      v_1(t) & t \in (0,\tau]\\
      v_2(t) & t \in (\tau,T)
      \end{array}
      \right.
  \end{equation}
  is the very weak solution in the sense of \eqref{eq:veryweak_heat}.
\end{lemma}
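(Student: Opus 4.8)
The plan is to verify the defining very weak identity \eqref{eq:veryweak_heat} for the glued function $v$ directly, by splitting the time integral at $\tau$ and handling each piece with a tool that is already available. So I fix an arbitrary test function $\varphi \in \Phi_T$, in particular $\varphi(T) = 0$, and write
\[
-\int_{(0,T)\times\Om}\Big(\tfrac{\pa\varphi}{\pa t}+\Delta\varphi\Big)v\,dx\,dt
= -\int_{(0,\tau)\times\Om}\Big(\tfrac{\pa\varphi}{\pa t}+\Delta\varphi\Big)v_1\,dx\,dt
  -\int_{(\tau,T)\times\Om}\Big(\tfrac{\pa\varphi}{\pa t}+\Delta\varphi\Big)v_2\,dx\,dt,
\]
using that $v=v_1$ on $(0,\tau]$ and $v=v_2$ on $(\tau,T)$. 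The goal is to show each summand produces an interface term at $t=\tau$ that cancels, leaving precisely $\int_\Om\varphi(0)\,dv_0$.

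For the first summand I would apply the last identity of Theorem \ref{thrm:existence_very_weak_solution}, but on the shorter interval $(0,\tau)$ and to the very weak solution $v_1$. To do this legitimately I must check that the restriction $\varphi|_{(0,\tau)}$ is an admissible test function for the $(0,\tau)$-problem, i.e.\ that $\varphi(\tau)\in L^2(\Om)$. This I would justify by time reversal: setting $\tilde\varphi(t)=\varphi(T-t)$, the function $\tilde\varphi$ solves a \emph{forward} heat equation $\pa_t\tilde\varphi-\Delta\tilde\varphi = -(\tfrac{\pa\varphi}{\pa t}+\Delta\varphi)(T-\cdot)$ with right-hand side in $L^\infty\subset L^2$ and initial value $\varphi(T)\in L^2(\Om)$, so standard parabolic regularity gives $\tilde\varphi\in C([0,T];L^2(\Om))$, hence $\varphi(s)\in L^2(\Om)$ for every $s\in[0,T]$, in particular at $s=\tau$. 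With this, the cited identity on $(0,\tau)$ yields the interface term explicitly,
\[
\int_\Om\varphi(\tau)v_1(\tau)
-\int_{(0,\tau)\times\Om}\Big(\tfrac{\pa\varphi}{\pa t}+\Delta\varphi\Big)v_1\,dx\,dt
=\int_\Om\varphi(0)\,dv_0 .
\]

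For the second summand $v_2$ has $L^2$ initial data $v_1(\tau)$, so I would use the standard integration-by-parts identity for the weak solution: testing $\pa_t v_2-\Delta v_2=0$ against $\varphi$, integrating by parts once in time and twice in space (the spatial boundary terms vanish since $v_2,\varphi\in H^1_0(\Om)$), gives
\[
\int_\Om v_2(T)\varphi(T)-\int_\Om v_2(\tau)\varphi(\tau)
-\int_{(\tau,T)\times\Om}\Big(\tfrac{\pa\varphi}{\pa t}+\Delta\varphi\Big)v_2\,dx\,dt=0 .
\]
Adding the two relations and using the matching condition $v_2(\tau)=v_1(\tau)$ from the definition of $v_2$, the two interface terms $\pm\int_\Om\varphi(\tau)v_1(\tau)$ cancel, while $\int_\Om v_2(T)\varphi(T)=0$ because $\varphi(T)=0$. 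What remains is exactly \eqref{eq:veryweak_heat} for $v$. Finally $v\in L^1((0,T)\times\Om)$ since $v_1$ is a very weak solution and $v_2\in C([\tau,T];L^2(\Om))\subset L^1$, so $v$ is a very weak solution; uniqueness in Theorem \ref{thrm:existence_very_weak_solution} then identifies it as \emph{the} very weak solution.

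I expect the main obstacle to be purely at the interface $t=\tau$: it is the bookkeeping of the two boundary-in-time terms and the verification that they genuinely cancel that makes the gluing consistent. The only nontrivial regularity point feeding into this is $\varphi(\tau)\in L^2(\Om)$, which is what allows the $(0,\tau)$ identity to carry the boundary term $\int_\Om\varphi(\tau)v_1(\tau)$ in the first place; everything else is the routine weak/very-weak formalism already set up in Theorem \ref{thrm:existence_very_weak_solution}.
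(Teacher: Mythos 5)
Your proof is correct, and since the paper's own proof consists only of the sentence ``The proof is straightforward,'' your argument fills in precisely the verification that the authors deem routine: split the defining identity at $t=\tau$, apply the extended identity of Theorem \ref{thrm:existence_very_weak_solution} to $v_1$ on $(0,\tau)$, integrate by parts for the energy solution $v_2$ on $(\tau,T)$, and observe that the interface terms $\pm\int_\Om \varphi(\tau)v_1(\tau)\,dx$ cancel. The one technical point you flag, $\varphi(\tau)\in L^2(\Om)$, is indeed the crux of admissibility, though it follows even more directly than by your time-reversal argument: from $\varphi \in L^2(0,T;H^1_0(\Om))$ one has $\Delta\varphi \in L^2(0,T;H^{-1}(\Om))$, hence $\partial_t\varphi = (\partial_t\varphi + \Delta\varphi) - \Delta\varphi \in L^2(0,T;H^{-1}(\Om))$, and the standard embedding of this space into $C([0,T];L^2(\Om))$ gives continuity of $\varphi$ into $L^2(\Om)$ at every time.
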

\begin{proof} The proof is straightforward.
\end{proof}

\section{Discretization}\label{sec:discretization}

In this section we describe the semidiscrete and  fully discrete finite element discretizations of the 
homogeneous equation~\eqref{eq:eq_aux} and present smoothing type error estimates. To discretize the problem,
we use  continuous  Lagrange finite elements of order $s\geq 1$  in space and discontinuous Galerkin methods of order $r\geq 0$ in time. 

\subsection{Time discretization}
To be more precise, we partition  $I =(0,T]$ into subintervals $I_m = (t_{m-1}, t_m]$ of length 
$k_m = t_m-t_{m-1}$, where $0 = t_0 < t_1 <\cdots < t_{M-1} < t_M =T$. The maximal and minimal time steps 
are denoted by $k =\max_{m} k_m$ and $k_{\min}=\min_{m} k_m$, respectively.
We impose the following conditions on the time mesh (as in~\cite{LeykekhmanD_VexlerB_2016a} 
or~\cite{DMeidner_RRannacher_BVexler_2011a}):
\begin{enumerate}[(i)]
  \item There are constants $c,\beta>0$ independent on $k$ such that
    \[
      k_{\min}\ge ck^\beta.
    \]
  \item There is a constant $\kappa>0$ independent on $k$ such that for all $m=1,2,\dots,M-1$
    \[
    \kappa^{-1}\le\frac{k_m}{k_{m+1}}\le \kappa.
    \]
  \item It holds $k\le\frac{T}{2r+2}$.
\end{enumerate}
The semidiscrete space $\Xk$ for the case $v_0 \in L^2(\Omega)$ is taken as 
\[
    \Xk=\Set{\varphi_k\in L^2(I;H^1_0(\Om)) | \varphi_k|_{I_m}\in 
    \mathbb{P}_{r}(I_m;H^1_0(\Om)), \ m=1,2,\dots,M},
\]
where $\mathbb{P}_{r}(I_m;V)$ is the space of polynomial functions of degree $r$ in time on $I_m$ with values
in a Banach space $V$.
However, for the semidiscrete formulation of \eqref{eq:eq_aux} with measure valued initial data $v_0\in \mathcal{M}(\Om)$ to be well defined, at initial time the test functions $\varphi_k$ need to be in $C_0(\Om)$.
Since for $N\geq 2$ the space $H^1_0(\Om)$ is not embedded in the space of continuous functions, we  need to modify the spaces of 
trial and test functions, as
\begin{equation*}
  \tXk = \left\lbrace \varphi_k \in L^2(I \times \Omega):  
 \varphi_k|_{I_m} \in \mathbb{P}_r(I_m;W_0^{1,p}(\Omega)), \ m=1,2,\dots,M
\right\rbrace
\end{equation*}
and
\begin{equation*}
  \hXk = \left\lbrace \varphi_k \in L^2(I \times \Omega) :
  \varphi_k|_{I_m} \in \mathbb{P}_r(I_m;W_0^{1,p'}(\Omega)),\  m=1,2,\dots,M
\right\rbrace,
\end{equation*}
for  some $\textcolor{red}{ \frac{2N}{N+2}} < p < \frac{N}{N-1}$ and 
$ \textcolor{red}{ \frac{2N}{N-2}} > p' > N$ the dual exponent satisfying 
$ \frac{1}{p} + \frac{1}{p'} = 1$. In this setting, the embedding 
$W_0^{1,p'}(\Om) \hookrightarrow C_0(\Om)$, yields that $\langle v_0, \varphi_{k,0}^+\rangle$ is well
defined for all \textcolor{red}{test functions} $\varphi_k \in \hXk$, 
\textcolor{red}{while every trial function $v_k \in \tXk$ satisfies
$v_k(t) \in W^{1,p}_0(\Om) \hookrightarrow L^2(\Om)$ for every $t \in I$.}
With these spaces, the dG($r$)
semidiscrete (in time)  solution $v_k$ of \eqref{eq:eq_aux} for
$v_0 \in \M(\Omega)$ is given by $v_k \in \tXk$ that satisfies
\begin{equation}\label{eq:semidiscrete_heat_measuredata}
    B(v_k,\varphi_k) = \left\langle v_0, \varphi_{k,0}^+ \right\rangle
    \quad \text{for all }\; \varphi_k \in \hXk.
\end{equation}
Here the bilinear form $B(\cdot,\cdot)$ is defined by
\begin{equation}\label{eq: bilinear form B}
  B(w,\varphi)=\sum_{m=1}^M \langle \textcolor{red}{\partial_t w},\varphi \rangle_{I_m \times \Omega} +
 (\na w,\na \varphi)_{\IOm}+\sum_{m=2}^M([w]_{m-1},\varphi_{m-1}^+)_{\Omega}+(w_{0}^+,\varphi_{0}^+)_{\Omega},
\end{equation}
where 
$\langle \cdot,\cdot \rangle_{I_m \times \Omega}$ is the duality product between $ L^2(I_m;W^{-1,p}(\Omega))$
and $ L^2(I_m;W^{1,p'}_0(\Omega))$. 
In the above definition we use the usual notation for functions with possible discontinuities at the nodes $t_m$:
\begin{equation}\label{def: time jumps}
w^+_m=\lim_{\eps\to 0^+}w(t_m+\eps), \quad w^-_m=\lim_{\eps\to 0^+}w(t_m-\eps), \quad [w]_m=w^+_m-w^-_m.
\end{equation}
\begin{remark}\label{rem:semidiscrete_l2_initial_data}
Note that whenever $v_0 \in L^2(\Omega)$ the formulation \eqref{eq:semidiscrete_heat_measuredata} is 
equivalent to searching $v_k \in \Xk$, satisfying
\begin{equation}\label{eq:semidiscrete_heat_l2}
  B(v_k,\varphi_k) = \textcolor{red}{\left( v_0, \varphi_{k,0}^+ \right)_{\Om}}
    \quad \text{for all }\; \varphi_k \in \Xk.
\end{equation}
\end{remark}
\begin{remark}
Since we are dealing with a homogeneous parabolic problem with constant coefficients,
the discontinuous Galerkin method actually coincides with subdiagonal Pad\'{e} approximations
and one can use, for example, a rational representation of the semidiscrete solution.  
\textcolor{red}{While} it is more convenient for our analysis to use the definition based on the 
bilinear form $B(\cdot,\cdot)$,
\textcolor{red}{this rational expression allows us to show wellposedness of the semidiscrete problem.}
\end{remark}
\textcolor{red}{
  \begin{theorem}
    Let $v_0 \in \M(\Om)$. Then the semidiscrete problem \eqref{eq:semidiscrete_heat_measuredata}
    has a unique solution $v_k \in \tXk$.
  \end{theorem}
  \begin{proof}
      It is sufficient to show the claim on the first time interval. 
      Let $\{\phi_j : j=0,...,r\}$ denote a basis of $\mathbb P_r(I_1;\mathbb R)$. 
      It is well known, that in the setting of \eqref{eq:semidiscrete_heat_l2},
      there exist polynomials $P_j(z)$, $j=0,...,r$ and $Q(z)$ of degrees $r$ and $r+1$, respectively,
      such that the variational formulation is equivalent to the rational representation
      \begin{equation*}
          v_k|_{I_1} = \sum_{j=0}^r \phi_j \dfrac{P_j(-k_1 \Delta)}{Q(-k_1 \Delta)} v_0,
      \end{equation*}
      see \cite{lesaint_finite_1974}, \cite[Section 4.1]{LeykekhmanD_VexlerB_2017a}.
      Here $Q(z)$ corresponds to the denominator of the subdiagonal $(r,r+1)$ Padé approximation  of $e^{-z}$.
      The polynomial $Q(z)$ posesses $r+1$ complex zeroes $\xi_n \in \mathbb C$, $n=1,...,r+1$.
      These all satisfy $Re(\xi_n) < 0$, and thus $\xi_n \in \rho(-k_1 \Delta)$, i.e., they are
      contained in the resolvent set of $-k_1 \Delta$, see \cite[Theorem 1.1]{saff_zeros_1975}.
      This implies that the operators $(\xi_n + k_1 \Delta)^{-1}\colon L^2(\Om) \to H^1_0(\Om)$,
      are well defined.
      By \cite[Theorem 8]{wanner_order_1978} and the fact that the $(r,r+1)$ Padé approximation
      is of order $2r+1$, we know that the zeroes $\xi_n$ of  $Q(z)$ are pairwise distinct.
      Hence for the coefficients of $v_k|_{I_1}$, there holds a partial fraction decomposition,
      and for some $c_{j,n} \in \mathbb C$, $j=0,...,r$, $n=1,...,r+1$ we have the representation 
      \begin{equation}\label{eq:semidiscrete_sol_representation}
          v_k|_{I_1} = \sum_{j=0}^r \phi_j \sum_{n=1}^{r+1} c_{j,n} (\xi_n + k_1 \Delta)^{-1} v_0.
      \end{equation}
      By the elliptic theory, we can show, that $(\xi_n +k_1 \Delta)^{-1}\colon \M(\Om) \to W^{1,p}_0(\Om)$,
      are well defined, which implies that \eqref{eq:semidiscrete_sol_representation} holds also true
      for $v_0 \in \M(\Om)$. 
      To show wellposedness of the elliptic problems, we employ the following construction.
      For any $\mu \in \M(\Om)$, due to 
      \cite[Corollary 2.7]{drelichman_weighted_2020}, there exists a unique solution 
      $u_\mu \in W^{1,p}_0(\Omega)$ with $\frac{2N}{N+2} < p < \frac{N}{N-1}$ to
      \begin{equation}\label{eq:elliptic_measure}
        k_1 (\nabla u_\mu,\nabla v)_\Om = \langle \mu, v\rangle \quad \text{for all } v \in W^{1,p'}_0(\Omega).
      \end{equation}
      By the embedding $W^{1,p}_0(\Om) \hookrightarrow L^2(\Om)$, it holds 
      $(\xi_n + k_1 \Delta)^{-1} u_\mu \in H^1_0(\Om)$.
      From this, we can construct $(\xi_n + k_1 \Delta)^{-1}\mu$ via 
      $\xi_n (\xi_n +k_1 \Delta)^{-1} u_\mu - u_\mu \in W^{1,p}_0(\Om)$, which concludes the proof.
\end{proof}
}

\begin{remark}\label{rem:semidiscrete_spaces}
\textcolor{red}{Due to Remark \ref{rem:semidiscrete_l2_initial_data}, the semidiscrete problem  
for $v_0 \in \M(\Om)$, can equivalently be formulated using $H^1_0(\Om)$ for test and trial functions
on the intervals $I_m$, $m=2,...M$, instead of using the spaces $\tXk,\hXk$ defined above.
By definition, it holds $v_{k,1}^- \in W^{1,p}_0(\Om) \hookrightarrow L^2(\Om)$, hence 
on subsequent intervals, the solution lies in $X_k^r$. This construction of spaces was pursued in 
\cite{LeyVexWal19}.
}
\end{remark}
Rearranging the terms in \eqref{eq: bilinear form B}, we obtain an equivalent (dual) expression for $B$:
\begin{equation}\label{eq:B_dual}
  B(w,\varphi)= - \sum_{m=1}^M \langle w,\textcolor{red}{\partial_t \varphi} \rangle_{I_m \times \Omega} +
    (\na w,\na \varphi)_{\IOm}-\sum_{m=1}^{M-1} (w_m^-,[\varphi]_m)_{\Omega} + (w_M^-,\varphi_M^-)_{\Omega}.
\end{equation}
Notice that for the very weak solution $v$ to \eqref{eq:veryweak_heat} and the semidiscrete solution $v_k \in \tXk$ to \eqref{eq:semidiscrete_heat_measuredata} we have the following orthogonality property:
\begin{equation}\label{eq: orthogonality semidiscrete}
    B(v-v_k,\varphi_k)=0  \quad \text{for all }\; \varphi_k\in \hXk,
\end{equation}
\textcolor{red}{which can be shown by splitting $v$ at $t_1$ according to Lemma \ref{lmm:composed_solution}, 
    using the weak formulation on $(t_1,T]$ and a density argument on $I_1$ to show,
    that the very weak solution can be tested with semidiscrete functions $\varphi_k \in \hXk$.}
Next we define the fully discrete approximation scheme. 
\subsection{Space discretization}
For some $h_0 > 0$ and $h \in (0, h_0]$ let $\mathcal{T}$
denote  a quasi-uniform triangulation of $\Om$  with mesh size $h$, i.e. $\mathcal{T} = \{\tau\}$ is a
partition of $\Om$ into cells (triangles or tetrahedrons) $\tau$ of diameter $h_\tau$ and measure~$|\tau|$
such that for $h=\max_{\tau} h_\tau$,
$$
h_\tau\le h \le C |\tau|^{\frac{1}{N}}, \quad \text{for all }\; \tau\in \mathcal{T},
$$
hold. Let $V^s_h\subset H^1_0(\Om)$ be the usual space of  conforming piecewise polynomial finite elements of degree $s$. We define the following three operators
to be used in the sequel: the discrete Laplacian $\Delta_h \colon V^s_h \to V^s_h$, defined by
\begin{equation}\label{eq:discrete_laplacian}
  (-\Delta_h v_h,w_h)_\Omega =(\nabla v_h,\nabla w_h)_\Omega \quad \text{for all }\; v_h,w_h \in V^s_h,
\end{equation}
the $L^2$ projection $P_h \colon L^2(\Omega) \to V^s_h$, defined by
\begin{equation}\label{eq:L2project}
(P_h v, w_h)_\Omega = ( v, w_h)_\Omega \quad \text{for all }\; w_h \in V^s_h,
\end{equation}
and the Ritz projection $R_h \colon H^1_0(\Omega) \to V^s_h$, defined by
\[
(\nabla R_h v, \nabla w_h)_\Omega = (\nabla v, \nabla w_h)_\Omega \quad \text{for all }\; w_h \in V^s_h.
\]
To obtain the fully discrete approximation of~\eqref{eq:eq_aux} we consider the space-time finite element
space
\begin{equation} \label{def: space_time}
    \Xkhs=\Set{v_{kh} \in \Xk | v_{kh}|_{I_m}\in \mathbb{P}_{r}(I_m;V^s_h), \ m=1,2,\dots,M}.
\end{equation}
We define a fully discrete cG($s$)dG($r$) approximation $v_{kh} \in \Xkhs$ of~\eqref{eq:eq_aux} by
\begin{equation}\label{eq:fully discrete heat}
    B(v_{kh},\varphi_{kh})=\left\langle v_0,\varphi_{kh,0}^+\right\rangle \quad \text{for all }\; \varphi_{kh}\in \Xkhs.
\end{equation}
Similarly to the time semidiscretization, we have the following orthogonality relation
for the \textcolor{red}{semidiscrete solution $v_k$ to \eqref{eq:semidiscrete_heat_measuredata}}
and the fully discrete solution $v_{kh} \in \Xkhs$ to \eqref{eq:fully discrete heat}:
\begin{equation}\label{eq: orthogonality fully discrete}
    B(v_k-v_{kh},\varphi_{kh})=0  \quad \text{for all }\;  \varphi_{kh}\in \Xkhs.
\end{equation}
Existence of a unique solution $v_{kh}$ is shown, e.g., in \cite{ThomeeV_2006}.
At the end of this section, we would like to introduce the  following truncation argument, 
which we will use often in our proofs. For $w_k,\varphi_k \in \Xk$,
we let $\tilde{w}_k=\chi_{(t_{\tilde{m}},T]}w_k$ and $\tilde{\varphi}_k=\chi_{(t_{\tilde{m}},T]}\varphi_{k}$, 
where $\chi_{(t_{\tilde{m}},T]}$ is the characteristic function on the interval $(t_{\tilde{m}},T]$,
for some $1\le\tilde{m}\le M$, i.e. $\tilde{w}_k=0$  on $I_1\cup \cdots \cup I_{\tilde{m}}$ for some
$\tilde{m}$  and $\tilde{w}_k={w}_k$ on the remaining time intervals.
Then from \eqref{eq: bilinear form B}, we have the identity
\begin{equation}\label{eq: Bilinear tilde}
    B(\tilde{w}_{k},\varphi_{k})=B(w_{k},\tilde{\varphi}_{k})+(w_{k,\tilde{m}}^-,\varphi_{k,\tilde{m}}^+)_\Omega.
\end{equation}
The same identity holds of course for fully discrete functions  $w_{kh},\varphi_{kh}\in\Xkhs$.

\section{Parabolic smoothing}\label{sec: parabolic smoothing}
In this section we review and establish smoothing properties of the continuous and discrete solutions, which are essential
for the establishment of our main results. 

\subsection{Smoothing estimates for the continuous problem}
It is well known that homogeneous parabolic problems have a strong smoothing effect. In particular, for  $v_0\in L^2(\Om)$, the solution  $v$ to the  problem~\eqref{eq:eq_aux} has the following smoothing property, 
see \cite[Chapter 1, Eq. 1.14]{ashyralyev_wellposedness_1994}
\begin{equation}\label{eq:continuous_smoothing}
    \|\partial_t^l v(t)\|_{L^2(\Om)} + \|(-\Delta)^l v(t)\|_{L^2(\Om)}\le
    \frac{C}{t^l}\| v_0\|_{L^2(\Om)}\quad t>0,\quad l=0,1,\dots.
\end{equation}
\begin{remark}
In many situations it is sufficient to have smoothing type estimates in $L^2$ norms and the corresponding smoothing results, for example in $L^p$ norms, can obtained by the Gagliardo-Nirenberg inequality
\begin{equation}\label{eq:general_Gagliardo-Nirenberg}
\norm{g}_{L^p(B)}\le C \norm{g}^{\alpha}_{H^2(B)} \norm{g}^{1-\alpha}_{L^2(B)},\quad 2\le p\le \infty,\quad \text{for}\ \alpha = \frac{N}{4}-\frac{N}{2p},
\end{equation}
which holds for any subdomain $B \subset \Omega$ fulfilling the cone condition (in particular for $B=\Omega$)
and for all $g \in H^2(B)$, see~\cite[Theorem 3]{AdamsFournier:1977}. In particular, for $p=\infty$ on convex domains, we have
\begin{equation}\label{eq:Gagliardo-Nirenberg_Omega}
    \norm{g}_{L^\infty(\Omega)}\le C \norm{\Delta g}^{\frac{N}{4}}_{L^2(\Omega)} 
    \norm{g}^{1-\frac{N}{4}}_{L^2(\Omega)}, \quad \text{for}\ g\in H^2(\Omega)\cap H^1_0(\Omega).
\end{equation}
 Thus using \eqref{eq:Gagliardo-Nirenberg_Omega} and the
 smoothing estimates \eqref{eq:continuous_smoothing} for $v_0\in L^2(\Om)$, we immediately obtain 
 \begin{equation}\label{eq: from_Linfty_to_L2}
  \| v(t)\|_{L^\infty(\Om)}\le
    \frac{C}{t^{\frac{N}{4}}}\| v_0\|_{L^2(\Om)}\quad t>0.
 \end{equation}
 \end{remark}
Using a duality argument and the smoothing estimates, this result can be easily extended to the case
$v_0\in \M(\Om)$.
First we derive the explicit time dependence of the constant occuring in the estimate of Theorem \ref{thrm:existence_very_weak_solution}.
\begin{lemma}\label{lem: L2 smoothign}
  Let  $v_0 \in \M(\Omega)$ and $v$ be the very weak solution of \eqref{eq:eq_aux}. Then
\begin{equation}
    \| v(t)\|_{L^2(\Om)}\le
    \frac{C}{t^{\frac{N}{4}}} \| v_0\|_{\M(\Om)}\quad t>0.
\end{equation}
\end{lemma}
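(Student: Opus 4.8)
The plan is to prove the estimate by a duality argument, using the final
identity of Theorem~\ref{thrm:existence_very_weak_solution} to transfer the
measure-valued initial data onto a smooth test function, and then invoking
the $L^2$ smoothing estimate \eqref{eq:continuous_smoothing} together with the
Gagliardo--Nirenberg-type bound \eqref{eq: from_Linfty_to_L2} to control the
dual problem. Concretely, to estimate $\ltwonorm{v(t)}$ I would pick an
arbitrary $g \in L^2(\Om)$ with $\ltwonorm{g} \le 1$ and represent the
$L^2$-pairing $(v(t),g)_\Om$ through the solution of a suitable backward heat
equation, choosing the test function $\varphi$ so that it solves the adjoint
problem on $(0,t)$ with final data $g$ at time $t$.

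First I would set up the backward (adjoint) problem: let $\varphi$ solve
$-\partial_t \varphi - \Delta \varphi = 0$ on $(0,t)\times\Om$ with homogeneous
boundary conditions and terminal condition $\varphi(t) = g$. This $\varphi$ is
admissible as a test function in the class $\Phi$ (restricted to $(0,t)$) since
$\partial_t\varphi + \Delta\varphi = 0 \in L^\infty$ and $\varphi(t) = g \in
L^2(\Om)$. Applying the identity from Theorem
\ref{thrm:existence_very_weak_solution} on the interval $(0,t)$ then gives
$(v(t),g)_\Om = \int_\Om \varphi(0)\,dv_0$, because the volume term vanishes by
the adjoint equation. The right-hand side is bounded by
$\linfnorm{\varphi(0)}\,\mnorm{v_0}$, so everything reduces to estimating
$\linfnorm{\varphi(0)}$ in terms of $\ltwonorm{g}$.

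Next I would estimate $\linfnorm{\varphi(0)}$ by reversing time: setting
$\psi(\tau) = \varphi(t-\tau)$ turns $\varphi$ into the forward heat solution
with initial data $g$, evaluated at time $\tau = t$, i.e.
$\varphi(0) = \psi(t)$ where $\psi$ solves the forward problem with
$\psi(0) = g$. Then the continuous $L^\infty$ smoothing estimate
\eqref{eq: from_Linfty_to_L2} yields
$\linfnorm{\varphi(0)} = \linfnorm{\psi(t)} \le C\, t^{-N/4}\,\ltwonorm{g}
\le C\, t^{-N/4}$.
Combining this with the duality pairing and taking the supremum over all
admissible $g$ gives exactly
$\ltwonorm{v(t)} \le C\, t^{-N/4}\,\mnorm{v_0}$, which is the claim.

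The main technical obstacle is ensuring that the adjoint test function
$\varphi$ genuinely lies in the test-function class $\Phi$ with enough
regularity at $t=0$ to make the evaluation $\varphi(0)$ well defined and
bounded in $L^\infty(\Om)$; this is where the smoothing of the backward
problem is essential, since $g$ is merely in $L^2$ but $\varphi(0)$ is the
value after a positive time $t$ of smoothing, hence continuous and in
$L^\infty$. A minor point to check is that $\varphi(0) \in C_0(\Om)$ (so that
the integral against the measure $v_0$ is meaningful), which again follows
from parabolic smoothing away from the initial time combined with the
homogeneous boundary condition. Once these regularity facts are in place, the
estimate follows directly from the stated smoothing bounds without further
computation.
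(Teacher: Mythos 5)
Your proposal is correct and follows essentially the same route as the paper: a duality argument with a backward heat equation, bounding the pairing of $v_0$ against the dual solution at time $0$ via the $L^2\to L^\infty$ smoothing estimate \eqref{eq: from_Linfty_to_L2}. The only (cosmetic) difference is that the paper takes the dual terminal data to be $v(T)$ itself and cancels a factor of $\ltwonorm{v(T)}$, whereas you test against an arbitrary $g$ in the $L^2$ unit ball and take a supremum.
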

\begin{proof}
We will establish the result for $t=T$. Define $y$ to be the solution to the dual problem
  \begin{equation}
    \left\lbrace 
    \begin{aligned}
      -\partial_t y - \Delta y &= 0 && \text{in } I\times \Om\\
      y &= 0 && \text{on } I \times \partial \Om\\
      y(T) &= v(T) && \text{in } \Om.
  \end{aligned}
\right.
  \end{equation}
  Then \eqref{eq: from_Linfty_to_L2} applied to $y$ yields $y(0) \in C_0(\Omega)$ and we have the estimate
  \begin{align}
    \|v(T)\|_{L^2(\Om)}^2 
    = (v(T),y(T))_\Om = \langle v_0, y(0)\rangle 
    &\le C \mnorm{v_0}\|y(0)\|_{L^\infty(\Om)}
    \le C T^{- \frac{N}{4}}\mnorm{v_0} \|v(T)\|_{L^{2}(\Om)}.
  \end{align}
 Canceling $\|v(T)\|_{L^{2}(\Om)}$ gives the result. 
\end{proof}
\begin{corr}\label{cor:continuous_smoothing_measure_data}
  Let  $v_0 \in \M(\Omega)$ and $v$ be the very weak solution of \eqref{eq:eq_aux}. Then
\begin{equation}\label{eq:continuous_smoothing_measure_data}
    \|\partial_t^l v(t)\|_{L^2(\Om)} + \|(-\Delta)^l v(t)\|_{L^2(\Om)}\le
    \frac{C}{t^{l+ \frac{N}{4}}} \| v_0\|_{\M(\Om)}\quad t>0,\quad l=0,1,\dots.
\end{equation}
\end{corr}
\begin{proof}
  This is a direct consequence of Theorem \ref{thrm:existence_very_weak_solution}, Lemma   \ref{lmm:composed_solution} and the above smoothing result. The time dependency
  of the constant can be seen, by fixing $t\in (0,T)$ and setting $\tau = \frac{t}{2}$ in Lemma 
  \ref{lmm:composed_solution}. Then by Lemma \ref{lem: L2 smoothign}, we have
  $\ltwonorm{v \left( \frac{t}{2} \right)} \le C \left(\frac{2}{t}\right)^{ \frac{N}{4}} \mnorm{v_0}$.
  By \eqref{eq:continuous_smoothing}, it also holds
    \begin{equation}
        \|\partial_t^l v(t)\|_{L^2(\Om)} + \|(-\Delta)^l v(t)\|_{L^2(\Om)}\le
        \frac{C}{(t - \frac{t}{2})^l}\ltwonorm{v \left( \frac{t}{2} \right)}
        \le \frac{C}{t^{l+ \frac{N}{4}}}\mnorm{v_0}.
    \end{equation}
\end{proof}
By applying the Gagliardo-Nirenberg  inequality \eqref{eq:Gagliardo-Nirenberg_Omega}, we immediately obtain:
\begin{corr}
  Let  $v_0 \in \M(\Omega)$ and $v$ be the very weak solution of \eqref{eq:eq_aux}. Then
\begin{equation}\label{eq:continuous_smoothing_measure_data Linfty}
    \|v(t)\|_{L^\infty(\Om)}\le
    \frac{C}{t^{ \frac{N}{2}}} \| v_0\|_{\M(\Om)},\quad t>0.
\end{equation}
\end{corr}
\subsection{Smoothing estimates for the discrete solutions}
For the time discontinuous Galerkin solutions, both the semidiscrete and the fully discrete, similar smoothing type estimates also hold (see Theorems 3,4,5,10 in
\cite{LeykekhmanD_VexlerB_2017a} and \textcolor{red}{Lemma 3.2 in \cite{LeyVexWal19}}
for general $L^p$ norms, cf. also
~\cite[Theorem~5.1]{ErikssonK_JohnsonC_LarssonS_1998a} for the case of the $L^2$ norm).
\textcolor{red}{
\begin{lemma}\label{lemm:linfty_l2_stability}
  Let $v_k \in \Xkr$ and $v_{kh} \in \Xkhs$ be the semidiscrete and fully discrete solutions  
  of \eqref{eq:semidiscrete_heat_measuredata} and \eqref{eq:fully discrete heat},
  respectively, with $v_0\in L^2(\Omega)$. Then, there exists a constant $C$ independent of $k$ and
 $h$ such that
 \begin{equation}
   \|v_k\|_{L^\infty(I;L^2(\Om))} \le C \|v_0\|_{L^2(\Om)} \quad \text{and} 
   \quad \|v_{kh}\|_{L^\infty(I;L^2(\Om))} \le C \|v_0\|_{L^2(\Om)}.
 \end{equation}
\end{lemma}
}
\begin{lemma}\label{lemma: homogeneous smoothing dG_r fully discrete}
  Let $v_k \in \Xkr$ and $v_{kh} \in \Xkhs$ be the semidiscrete and fully discrete solutions  of \eqref{eq:semidiscrete_heat_measuredata} and \eqref{eq:fully discrete heat}, respectively, with $v_0\in L^2(\Omega)$.  Then, there exists a constant $C$ independent of $k$ and
 $h$ such that
$$
\begin{aligned}
    \sup_{t\in I_m}\|\pa_t v_{k}(t)\|_{L^2(\Om)}+\sup_{t\in I_m}\|\Delta v_{k}(t)\|_{L^2(\Om)}+
        k_m^{-1}\|[v_{k}]_{m-1}\|_{L^2(\Om)}&\le \frac{C}{t_m}\|v_0\|_{L^2(\Om)},\\
    \sup_{t\in I_m}\|\pa_t v_{kh}(t)\|_{L^2(\Om)}+\sup_{t\in I_m}\|\Delta_h v_{kh}(t)\|_{L^2(\Om)}+
        k_m^{-1}\|[v_{kh}]_{m-1}\|_{L^2(\Om)}&\le \frac{C}{t_m}\|v_0\|_{L^2(\Om)},
\end{aligned}
$$
for $m=1,2,\dots,M$. For $m=1$ the jump term is understood as 
$[v_{k}]_0 = v_{k,0}^+-v_0$ and $[v_{kh}]_0 = v_{kh,0}^+-P_h v_0$.
\end{lemma}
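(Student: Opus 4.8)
The plan is to exploit the remark made earlier that, for this constant-coefficient homogeneous problem, the dG($r$) propagator is a rational (subdiagonal Padé) function of the spatial operator, and to reduce the three vector-valued quantities to a single family of \emph{scalar} smoothing estimates via a spectral decomposition. I will treat the fully discrete case in detail; the semidiscrete case is identical with $-\Delta_h$ replaced by $-\Delta$, which is legitimate because, on the convex domain $\Om$, the eigenfunctions of $-\Delta$ lie in $H^2(\Om)\cap H^1_0(\Om)$ and the smoothing forces $v_k(t)\in H^2(\Om)$, so that $\Delta v_k(t)\in L^2(\Om)$ is well defined.

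First I would diagonalize $-\Delta_h$ on $V_h^s$, writing its eigenpairs as $\{\lambda_j,\phi_j\}$ with $\lambda_j>0$ and $\{\phi_j\}$ orthonormal in $L^2(\Om)$, and expand $P_h v_0=\sum_j c_j\phi_j$. Since the operator in \eqref{eq:fully discrete heat} acts diagonally in this basis, the fully discrete scheme decouples exactly into independent scalar dG($r$) problems: on mode $j$ the coefficient $u_j(t)$ of $\phi_j$ solves the scalar scheme for $u'+\lambda_j u=0$ with initial value $c_j$. By Parseval each of the three $L^2(\Om)$ norms in the statement becomes a weighted $\ell^2$ sum over $j$ of scalar quantities built from $u_j$, its derivative, and its jumps, with weights $1$, $\lambda_j^2$, and $k_m^{-2}$, respectively. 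It therefore suffices to prove the scalar estimates uniformly in $\lambda=\lambda_j>0$ and then resum.

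Next I would establish the scalar smoothing estimate. The dG($r$) scheme propagates the scalar solution across $I_i$ through the stability function $R(z)$, the subdiagonal $(r,r+1)$-Padé approximant of $e^{-z}$, so that $u_j(t_m^-)=c_j\prod_{i=1}^m R(\lambda_j k_i)$ and the polynomial on $I_m$ is determined by its left endpoint value. The relevant properties of $R$ are strong A-stability, $|R(z)|\le 1$ and $|R(z)|\le C(1+z)^{-1}$ for $z\ge 0$, together with consistency $R(z)=e^{-z}+O(z^{2r+2})$ near $0$, which gives $|R(z)|\le e^{-cz}$ for small $z$. Using these I would prove the key multiplier bound
\begin{equation*}
  \sup_{\lambda>0}\ \lambda^l\prod_{i=1}^m|R(\lambda k_i)|\ \le\ \frac{C}{t_m^l},\qquad l=0,1,
\end{equation*}
and deduce from it, using that $u_j|_{I_m}$ is a polynomial of fixed degree $r$ (so that $u_j'$, $\sup_{I_m}|u_j|$, and $k_m^{-1}[u_j]_{m-1}$ are all controlled by endpoint values after scaling by $k_m$ through standard inverse estimates in time), the scalar analogues of all three terms with the factor $t_m^{-1}$; e.g. for dG($0$) one has $k_m^{-1}[u_j]_{m-1}=-\tfrac{\lambda_j}{1+\lambda_j k_m}u_j(t_{m-1}^-)$, which is bounded by the $l=1$ estimate at $t_{m-1}$ and then by $C/t_m$ via mesh regularity.

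Finally I would reassemble: squaring, multiplying by the spectral weight, summing over $j$, and invoking Parseval returns the three $L^2(\Om)$ estimates; the scalar dG equation on each interval links $u_j'$, $\lambda_j u_j$ and $k_m^{-1}[u_j]_{m-1}$, so the three terms are mutually comparable and none is lost, while the $m=1$ jump is handled by the stated convention $[v_{kh}]_0=v_{kh,0}^+-P_h v_0$, which is exactly the scalar jump at the first node. The main obstacle is the variable-time-step multiplier bound above: for equal steps it reduces to the clean inequality $\sup_{z>0}z^l|R(z)|^m\le C_l\,m^{-l}$, but for graded meshes the product $\prod_{i=1}^m|R(\lambda k_i)|$ must be compared with $e^{-\lambda t_m}$ and its accumulation controlled uniformly in $\lambda$ using the mesh conditions (i)--(ii); this is precisely the technical point carried out in the cited works \cite{LeykekhmanD_VexlerB_2017a,ErikssonK_JohnsonC_LarssonS_1998a}.
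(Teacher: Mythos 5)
The paper offers no proof of this lemma at all: it is imported from the literature, with the smoothing estimates cited from Theorems 3--5 and 10 of \cite{LeykekhmanD_VexlerB_2017a} and Theorem 5.1 of \cite{ErikssonK_JohnsonC_LarssonS_1998a}, where they are obtained by resolvent (Dunford--Taylor / analytic semigroup) techniques that work in $L^p$ and Banach-space settings and for variable time steps. Your route --- diagonalizing $-\Delta_h$ (resp.\ $-\Delta$, whose eigenfunctions on the convex domain lie in $H^2(\Omega)\cap H^1_0(\Omega)$), decoupling the dG($r$) scheme into independent scalar problems, and reducing all three estimates to uniform bounds on rational functions of $z=\lambda k_i$ --- is therefore genuinely different: it is the classical self-adjoint, Hilbert-space argument, more elementary and self-contained, and fully adequate here since the lemma only involves $L^2$ norms; what it cannot deliver is the $L^p$ generality of the cited theorems. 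It is also consistent with the paper's own remark that for this constant-coefficient homogeneous problem the dG method coincides with subdiagonal Pad\'e approximation.

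Two steps of your sketch need repair before it is a proof. First, the claim that $u_j'$, $\sup_{I_m}|u_j|$ and the jump are ``controlled by endpoint values \ldots through standard inverse estimates in time'' is incorrect as stated for $r\ge 1$: a degree-$r$ polynomial is not controlled by its two endpoint values (your dG($0$) example hides this because there the solution is constant on each interval). The correct mechanism stays inside your framework: the local dG solve expresses $u_j|_{I_m}$ as $F(z,\hat t)\,u_j(t_{m-1}^-)$ with $z=\lambda_j k_m$ and $\hat t\in[0,1]$ the rescaled time, where Cramer's rule for the $(r+1)\times(r+1)$ local system shows that each $F(\cdot,\hat t)$ is rational of type $(r,r+1)$ with the zero-free (on $[0,\infty)$) Pad\'e denominator; hence $\sup_{\hat t}|F(z,\hat t)|\le C(1+z)^{-1}$, $\sup_{\hat t}|\partial_{\hat t}F(z,\hat t)|\le Cz(1+z)^{-1}$ (using $F(0,\cdot)\equiv 1$), and $|F(z,0)-1|\le C\min(z,1)$. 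Each of the three scalar quantities is then bounded by $C\min\bigl(\lambda,k_m^{-1}\bigr)\prod_{i<m}|R(\lambda k_i)|\,|c_j|$, and combining your multiplier bound at $t_{m-1}$ with the trivial bound $\prod_{i<m}|R|\le 1$ gives $C\min\bigl(t_{m-1}^{-1},k_m^{-1}\bigr)|c_j|\le 2C\,t_m^{-1}|c_j|$, since $t_m=t_{m-1}+k_m$. Second, the variable-step multiplier bound that you defer to the cited works is in fact elementary and should be proved rather than cited: split the indices into those with $\lambda k_i\le z_0$, where $|R(\lambda k_i)|\le e^{-c\lambda k_i}$ makes that partial product $\le C\bigl(\lambda\sum k_i\bigr)^{-1}$ over that set, and the rest, where $|R(\lambda k_i)|\le\min\bigl(\gamma_0,C(\lambda k_i)^{-1}\bigr)$ with $\gamma_0<1$ makes that partial product $\le C\bigl(\lambda\sum k_i\bigr)^{-1}$ over that set as well (the factor $n\gamma_0^{n-1}$ being bounded); since both partial products are at most one, $\min(a^{-1},b^{-1})\le 2(a+b)^{-1}$ yields $\prod_{i=1}^m|R(\lambda k_i)|\le C(\lambda t_m)^{-1}$. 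With these two repairs your argument closes, and notably it does not even use the mesh conditions (i)--(ii).
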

The above estimates immediately imply the following stability result. 
\begin{corr}\label{cor: maximal parabolic initial in L1}
Under the assumptions of Lemma \ref{lemma: homogeneous smoothing dG_r fully discrete}, we have
$$
\sum_{m=1}^M \left( \|\pa_t v_{k}\|_{L^1(I_m;L^2(\Om))}+\|\Delta v_{k}\|_{L^1(I_m;L^2(\Om))}+ k_m \|\Delta v_{k,m}^+\|_{L^2(\Om)}  + \|[v_{k}]_{m-1}\|_{L^2(\Om)}\right)\le C\lk\|v_0\|_{L^2(\Om)}
$$
and
\begin{multline*}
\sum_{m=1}^M \Bigl(\|\pa_t v_{kh}\|_{L^1(I_m;L^2(\Om))}+\|\Delta_h v_{kh}\|_{L^1(I_m;L^2(\Om))}
+k_m \|\Delta_h v_{kh,m}^+\|_{L^2(\Om)}+\|[v_{kh}]_{m-1}\|_{L^p(\Om)}\Bigr)\le C\lk\|v_0\|_{L^2(\Om)}.
\end{multline*}
\end{corr}
For sufficiently many time steps, applying Lemma~\ref{lemma: homogeneous smoothing dG_r fully discrete}
iteratively, we have the following result.
\begin{lemma}\label{lemma: higher smoothing}
  Let $v_k \in \Xkr$ and $v_{kh} \in \Xkhs$ be the semidiscrete and fully discrete solutions  of \eqref{eq:semidiscrete_heat_measuredata} and \eqref{eq:fully discrete heat}, respectively, with $v_0\in L^2(\Omega)$. Then, for any $m \in \{1,2,\dots M\}$, any $l \le m$, there hold
\[
\sup_{t\in I_m}\|\pa_t(-\Delta)^{l-1} v_{k}(t)\|_{L^2(\Om)}+\sup_{t\in I_m}\|(-\Delta)^l v_{k}(t)\|_{L^2(\Om)}
+k_m^{-1}\|[(-\Delta)^{l-1}v_{k}]_{m-1}\|_{L^2(\Om)}\le \frac{C}{t^l_m}\|v_0\|_{L^2(\Om)}
\]
and
\[
    \sup_{t\in I_m}\|\pa_t(-\Delta_h)^{l-1} v_{kh}(t)\|_{L^2(\Om)}
    +\sup_{t\in I_m}\|(-\Delta_h)^l v_{kh}(t)\|_{L^2(\Om)}
    +k_m^{-1}\|[(-\Delta_h)^{l-1}v_{kh}]_{m-1}\|_{L^2(\Om)}\le \frac{C}{t^l_m}\|v_0\|_{L^2(\Om)},
\]
provided $k \le \frac{t_m}{l+1}$.
\end{lemma}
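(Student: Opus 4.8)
The plan is to establish the estimate by iterating Lemma~\ref{lemma: homogeneous smoothing dG_r fully discrete} (the case $l=1$) across a splitting of $(0,t_m]$ into $l$ subintervals of length comparable to $t_m/l$; this is equivalent to an induction on $l$. Two structural observations make the iteration possible. The first is a \emph{restart} property: for any grid index $j$, the restriction of $v_{kh}$ to $I_{j+1},\dots,I_M$ is again the fully discrete dG solution of \eqref{eq:fully discrete heat} on $(t_j,T]$ with initial datum $v_{kh,j}^-\in V^s_h$, and analogously for $v_k$. This follows from the truncation identity \eqref{eq: Bilinear tilde} together with the Galerkin orthogonality \eqref{eq: orthogonality fully discrete} (resp.\ \eqref{eq: orthogonality semidiscrete}), reflecting that the dG scheme marches forward in time. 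The second is a \emph{commutation} property: $(-\Delta_h)^{i}v_{kh}$ is itself the dG solution with initial datum $(-\Delta_h)^{i}v_{kh,j}^-$, and likewise $(-\Delta)^{i}v_k$ solves \eqref{eq:semidiscrete_heat_measuredata} with datum $(-\Delta)^{i}v_{k,j}^-$.

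For the fully discrete scheme the commutation is elementary: since $-\Delta_h\colon V^s_h\to V^s_h$ is self-adjoint and $\Xkhs$ is invariant under $\varphi_{kh}\mapsto-\Delta_h\varphi_{kh}$, rewriting $(\nabla w,\nabla\varphi)_\Omega=(-\Delta_h w,\varphi)_\Omega$ in \eqref{eq: bilinear form B} and using that $-\Delta_h$ commutes with $\partial_t$ and with the jump operators yields $B(-\Delta_h v_{kh},\varphi_{kh})=B(v_{kh},-\Delta_h\varphi_{kh})$ for all $\varphi_{kh}\in\Xkhs$. Testing the equation for $v_{kh}$ with $-\Delta_h\varphi_{kh}$ then identifies $-\Delta_h v_{kh}$ as the dG solution with datum $-\Delta_h v_{kh,j}^-$, and iterating gives arbitrary powers. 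In the semidiscrete case $-\Delta$ does not preserve the test space $\hXk$, so instead I would invoke the rational (subdiagonal Pad\'e) representation of the solution discussed in Section~\ref{sec:discretization}: on each step $v_k$ equals a fixed rational function of $-k_j\Delta$ applied to the datum, and the Dirichlet Laplacian commutes with its own functional calculus, so $(-\Delta)^{i}v_k$ solves the same scheme with datum $(-\Delta)^{i}v_{k,j}^-$, which lies in $L^2(\Omega)$ by the regularity gained at the preceding steps.

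With these two facts the iteration is routine. Choose grid indices $0=m_0<m_1<\dots<m_l=m$ with $t_{m_i}$ nearest to $\tfrac{i}{l}t_m$; the hypothesis $k\le t_m/(l+1)$ together with the mesh conditions (i)--(ii) guarantees that the resulting gaps satisfy $g_i:=t_{m_i}-t_{m_{i-1}}\ge \tfrac{t_m}{l(l+1)}$, hence are all comparable to $t_m$ for fixed $l$. At step $i$ the function $(-\Delta_h)^{i-1}v_{kh}$ is, by the restart and commutation properties, the dG solution on $(t_{m_{i-1}},\,\cdot\,]$ with datum $(-\Delta_h)^{i-1}v_{kh,m_{i-1}}^-$; applying Lemma~\ref{lemma: homogeneous smoothing dG_r fully discrete} on this subproblem and reading off the left limit at $t_{m_i}$ gives $\ltwonorm{(-\Delta_h)^{i}v_{kh,m_i}^-}\le \tfrac{C}{g_i}\ltwonorm{(-\Delta_h)^{i-1}v_{kh,m_{i-1}}^-}$, the chain starting from the original datum $v_0$ at $i=1$. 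Telescoping over $i=1,\dots,l-1$ and applying Lemma~\ref{lemma: homogeneous smoothing dG_r fully discrete} once more on the final subinterval $(t_{m_{l-1}},\,\cdot\,]$ — where its left-hand side is exactly the three quantities to be bounded at $t\in I_m$ — yields a bound by $C\,\bigl(\prod_{i=1}^{l}g_i\bigr)^{-1}\ltwonorm{v_0}$. Since $\prod_i g_i\ge\bigl(\tfrac{t_m}{l(l+1)}\bigr)^{l}$, this is $\le C\,t_m^{-l}\ltwonorm{v_0}$ with $C=C(l,r)$; the semidiscrete estimate is obtained identically.

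I expect the main obstacle to be the two points I glossed over. The first is the rigorous justification of the commutation property in the semidiscrete setting, where the natural test-function manipulation fails because $-\Delta$ maps $W_0^{1,p'}(\Omega)$ out of itself; this forces the detour through the functional-calculus/Pad\'e representation and a check that each intermediate datum $(-\Delta)^{i}v_{k,j}^-$ indeed lies in $L^2(\Omega)$. The second is the purely technical bookkeeping of the splitting: one must select the breakpoints $m_i$ as genuine grid indices so that every restarted subproblem still satisfies the mesh conditions (i)--(iii) required to invoke Lemma~\ref{lemma: homogeneous smoothing dG_r fully discrete}, while keeping all gaps $g_i$ of order $t_m$. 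This is exactly what the hypothesis $k\le t_m/(l+1)$ is tailored to ensure, and it is the reason the constant is permitted to depend on $l$.
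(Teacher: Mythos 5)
Your proposal is correct and takes essentially the same route as the paper: the paper offers no detailed proof, stating only that the result follows ``for sufficiently many time steps, applying Lemma~\ref{lemma: homogeneous smoothing dG_r fully discrete} iteratively,'' which is precisely your restart--commute--telescope iteration. Your explicit treatment of the commutation property (via $B(-\Delta_h v_{kh},\varphi_{kh})=B(v_{kh},-\Delta_h\varphi_{kh})$ in the fully discrete case and the Pad\'e/functional-calculus representation in the semidiscrete case) and of the breakpoint bookkeeping simply supplies details the paper leaves implicit.
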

Using the continuous \eqref{eq:Gagliardo-Nirenberg_Omega} and the discrete version of the Gagliardo-Nirenberg
inequality, namely
\begin{equation}\label{eq: discrete Gagliardo-Nirenber}
    \|\chi\|_{L^\infty(\Om)}\le C\|\Delta_h \chi\|^{\frac{N}{4}}_{L^2(\Om)}\|\chi\|^{1-\frac{N}{4}}_{L^2(\Om)},
    \quad \text{for all }\; \chi\in V_h^s,
\end{equation}
which  for example was established for smooth domains in \cite[Lemma 3.3]{HansboA_2002a}, but the proof is
valid for convex domains as well, we immediately obtain the following smoothing result.
\begin{corr}\label{cor: discrete smoothing in Linfty}
Under the assumptions of Lemmas \textcolor{red}{\ref{lemm:linfty_l2_stability} and}
\ref{lemma: homogeneous smoothing dG_r fully discrete} for all
$m=\textcolor{red}{1,2,...,M}$, we have
$$
   \sup_{t\in I_m}\| v_{k}(t)\|_{L^\infty(\Om)}\le \frac{C}{t_m^{N/4}}\|v_0\|_{L^2(\Om)}\quad\text{and} \quad \sup_{t\in I_m}\| v_{kh}(t)\|_{L^\infty(\Om)}\le \frac{C}{t_m^{N/4}}\|v_0\|_{L^2(\Om)}.
$$
\end{corr}
Similarly to the continuous case, 
using a duality argument, the above smoothing results can be extended to $v_0\in \M(\Om)$.

\begin{lemma}\label{lemma: smoothing 2}
  Let \textcolor{red}{$v_0 \in \M(\Om)$, and let $v_k \in \tXk$} and $v_{kh} \in \Xkhs$
  be the semidiscrete and the 
  fully discrete solutions of \eqref{eq:semidiscrete_heat_measuredata} and \eqref{eq:fully discrete heat} 
  respectively. For any $m \in \{1,2,\dots M\}$,  there hold
 \begin{equation}
       \|v_k(t_m)\|_{L^2(\Om)}+  \|v_{kh}(t_m)\|_{L^2(\Om)}\le
    \frac{C}{t^{\frac{N}{4}}_m}\|v_0\|_{\mathcal{M}(\Om)}.
 \end{equation}
\end{lemma}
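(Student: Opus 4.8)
The plan is to mirror the duality argument used for the continuous estimate in Lemma~\ref{lem: L2 smoothign}, replacing the continuous $L^\infty$ smoothing by its discrete counterpart from Corollary~\ref{cor: discrete smoothing in Linfty}. I treat the fully discrete case in detail; the semidiscrete case is identical once the dual function $z_k$ is placed in the correct test space $\hXk$. Fix $m$ and set $g = v_{kh}(t_m) = v_{kh,m}^- \in V^s_h$, so that $\|v_{kh}(t_m)\|_{L^2(\Om)}^2 = (v_{kh,m}^-, g)_\Om$. First I would introduce the fully discrete adjoint solution $z_{kh} \in \Xkhs$, supported on $(0,t_m]$, defined by $B(\varphi_{kh}, z_{kh}) = (\varphi_{kh,m}^-, g)_\Om$ for all $\varphi_{kh} \in \Xkhs$. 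Via the dual form \eqref{eq:B_dual} of $B$, this is exactly the backward-in-time dG scheme with terminal data $g$ at $t_m$, and the truncation identity \eqref{eq: Bilinear tilde} accounts for the extension by zero onto $(t_m,T]$ when $z_{kh}$ is used as a global test function.

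Testing the primal equation \eqref{eq:fully discrete heat} with $z_{kh}$, which is admissible because $V^s_h \subset C_0(\Om)$ renders the pairing with $v_0$ meaningful, yields the chain
\[
\|v_{kh}(t_m)\|_{L^2(\Om)}^2 = (v_{kh,m}^-,g)_\Om = B(v_{kh},z_{kh}) = \langle v_0,z_{kh,0}^+\rangle \le \mnorm{v_0}\,\|z_{kh,0}^+\|_{L^\infty(\Om)}.
\]
It then remains to bound $\|z_{kh,0}^+\|_{L^\infty(\Om)}$. Here I would use that $-\Delta_h$ and $P_h$ are self-adjoint in $L^2(\Om)$, so that under the time reversal $t \mapsto t_m - t$ the problem defining $z_{kh}$ becomes a forward dG scheme over $m$ intervals with $L^2$ initial data $g$, whose value on the last subinterval corresponds to $z_{kh,0}^+$. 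Corollary~\ref{cor: discrete smoothing in Linfty} then gives $\|z_{kh,0}^+\|_{L^\infty(\Om)} \le C t_m^{-N/4}\|g\|_{L^2(\Om)}$, and inserting this while cancelling the common factor $\|v_{kh}(t_m)\|_{L^2(\Om)} = \|g\|_{L^2(\Om)}$ produces the claimed bound. The same scheme applied with the semidiscrete $L^\infty$ smoothing from Corollary~\ref{cor: discrete smoothing in Linfty} handles $v_k$.

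The main obstacle is the rigorous justification that the adjoint solution inherits the $L^\infty$ smoothing, i.e. that the discrete operator sending terminal data at $t_m$ to $z_{kh,0}^+$ is the $L^2$-adjoint of the forward solution operator and therefore obeys Corollary~\ref{cor: discrete smoothing in Linfty}; this rests on the time symmetry of the dG bilinear form visible when comparing \eqref{eq: bilinear form B} with \eqref{eq:B_dual}, together with the self-adjointness of $\Delta_h$. A secondary technical point arises in the semidiscrete case, where $z_k$ must lie in $\hXk$ for the pairing $\langle v_0, z_{k,0}^+\rangle$ to be defined: since the adjoint data $g \in L^2(\Om)$ and, for $m \ge 2$, at least one full step of parabolic smoothing acts before reaching $I_1$, elliptic regularity on the convex domain places $z_k|_{I_1}$ in $H^2(\Om) \hookrightarrow W^{1,p'}_0(\Om)$ for a suitable $p' > N$, in line with Remark~\ref{rem:semidiscrete_spaces}. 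Finally, the borderline case $m = 1$, which lies outside the stated range of Corollary~\ref{cor: discrete smoothing in Linfty}, should be treated directly, for instance by combining $L^2$-stability of the scheme with an inverse inequality.
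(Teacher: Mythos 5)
Your proposal is correct and takes essentially the same approach as the paper: a duality argument with a discrete backward-in-time dG solution having terminal data $v(t_m)$, the identity $\|v(t_m)\|_{L^2(\Om)}^2 = B(v,z) = \langle v_0, z_0^+\rangle$, the discrete $L^\infty$ smoothing of Corollary~\ref{cor: discrete smoothing in Linfty} applied to the dual problem, and cancellation of the common $L^2$ factor. The only differences are cosmetic: you work out the fully discrete case while the paper works out the semidiscrete one (each declaring the other analogous), and your explicit flagging of the $m=1$ case, which Corollary~\ref{cor: discrete smoothing in Linfty} does not cover, is a point the paper passes over by simply restricting to $m\ge 2$.
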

\begin{proof}
  \textcolor{red}{Let $m=1,2,\dots,M$, and}
define $y_k\in \hXk$ to be the semidiscrete solution of the backward problem
$$
B(\psi_k,y_k)=(\psi_{k,m}^-,v_k(t_m))_\Omega, \quad \forall \psi_k\in \tXk,
$$
\textcolor{red}{where the right hand side is well defined, due to the assumptions on 
  $\tXk$, yielding $v_k(t_m) \in L^2(\Om)$.
  Since for this dual problem, the test functions are taken from the weaker space $\tXk$,
choosing $\psi_k=v_k \in \tXk$,}
and using Corollary \ref{cor: discrete smoothing in Linfty} for the backward problem, we have
$$
\|v_k(t_m)\|^2_{L^2(\Om)} = B(v_k,y_k)=\left\langle v_0, y_{k,0}^+ \right\rangle\le \|v_0\|_{\mathcal{M}(\Om)}\|y_{k}(0)\|_{L^\infty(\Om)}\le \frac{C}{t_m^{N/4}}\|v_0\|_{\mathcal{M}(\Om)}\|v_k(t_m)\|_{L^2(\Om)}.
$$
Canceling, we obtain the result for the time semidiscrete solution $v_k$.
The argument for the fully discrete solution $v_{kh}$ is almost identical. 
\end{proof}

From Lemma \ref{lemma: higher smoothing}, we can obtain the following result
\begin{lemma}\label{lemma: higher smoothing 2}
  Let $v_k \in \textcolor{red}{\tXk}$ and $v_{kh} \in \Xkhs$ be the semidiscrete and the fully discrete solutions of \eqref{eq:semidiscrete_heat_measuredata} and 
  \eqref{eq:fully discrete heat} respectively.
 Let $m \in \{1,2,\dots M\}$ large enough and $l\le m$,
 such that $k \le \min \lbrace \frac{t_m}{4}, \frac{t_m}{2(l+1)}\rbrace$, then there hold
 \begin{equation}
    \sup_{t\in I_m}\|\pa_t(-\Delta)^{l-1} v_{k}(t)\|_{L^2(\Om)}+
    \sup_{t\in I_m}\|(-\Delta)^l v_{k}(t)\|_{L^2(\Om)}+
    \sup_{t\in I_m}k_m^{-1}\|[(-\Delta)^{l-1}v_{k}]_{m-1}\|_{L^2(\Om)}\le
    \frac{C}{t^{l+\frac{N}{4}}_m}\|v_0\|_{\mathcal{M}(\Om)}
 \end{equation}
 and
 \begin{equation}
    \sup_{t\in I_m}\|\pa_t(-\Delta_h)^{l-1} v_{kh}(t)\|_{L^2(\Om)}+
    \sup_{t\in I_m}\|(-\Delta_h)^l v_{kh}(t)\|_{L^2(\Om)}+
    \sup_{t\in I_m}k_m^{-1}\|[(-\Delta_h)^{l-1}v_{kh}]_{m-1}\|_{L^2(\Om)}\le
    \frac{C}{t^{l+{\frac{N}{4}}}_m}\|v_0\|_{\mathcal{M}(\Om)}.
 \end{equation}
\end{lemma}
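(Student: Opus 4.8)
The plan is to reproduce, at the discrete level, the splitting argument used for Corollary \ref{cor:continuous_smoothing_measure_data}: combine the $L^2$-at-a-node bound of Lemma \ref{lemma: smoothing 2} with the higher-order $L^2\to L^2$ smoothing of Lemma \ref{lemma: higher smoothing}. It suffices to treat the semidiscrete solution $v_k$, since the fully discrete case follows verbatim with $-\Delta$ replaced by $-\Delta_h$ and the semidiscrete lemmas replaced by their fully discrete counterparts (noting that the restart datum $v_{kh}(t_{\tilde m})\in V_h^s$, so the $L^2$ projection acts trivially).

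First I would select an intermediate node near the midpoint of $(0,t_m]$. Let $\tilde m$ be the largest index with $t_{\tilde m}\le\frac{t_m}{2}$. Since consecutive nodes differ by at most $k\le\frac{t_m}{4}$, one has $t_{\tilde m+1}>\frac{t_m}{2}$ and hence $t_{\tilde m}>\frac{t_m}{2}-k\ge\frac{t_m}{4}$, so that $\frac{t_m}{4}<t_{\tilde m}\le\frac{t_m}{2}$, $1\le\tilde m<m$, and $t_m-t_{\tilde m}\ge\frac{t_m}{2}$. Applying Lemma \ref{lemma: smoothing 2} at $t_{\tilde m}$ and using $t_{\tilde m}>\frac{t_m}{4}$ gives
\[
  \|v_k(t_{\tilde m})\|_{L^2(\Om)}\le\frac{C}{t_{\tilde m}^{N/4}}\|v_0\|_{\M(\Om)}\le\frac{C}{t_m^{N/4}}\|v_0\|_{\M(\Om)}.
\]

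The core step is a restart (causality) argument. Because the dG bilinear form couples successive intervals only through the jump terms, as made precise by the truncation identity \eqref{eq: Bilinear tilde}, the restriction of $v_k$ to the intervals $I_{\tilde m+1},\dots,I_M$ is itself the semidiscrete dG solution of the homogeneous problem started at time $t_{\tilde m}$ with $L^2$ initial datum $v_k(t_{\tilde m})=v_{k,\tilde m}^-$. As the time-mesh conditions (i)--(iii) are translation invariant, I may apply Lemma \ref{lemma: higher smoothing} to this restarted solution, with absolute time replaced by time elapsed since $t_{\tilde m}$: the interval index is $m-\tilde m$ and the relevant node time is $t_m-t_{\tilde m}$. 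The hypotheses $l\le m-\tilde m$ and $k\le\frac{t_m-t_{\tilde m}}{l+1}$ both follow from $t_m-t_{\tilde m}\ge\frac{t_m}{2}$ together with $k\le\frac{t_m}{2(l+1)}$, since these force $m-\tilde m\ge\frac{t_m-t_{\tilde m}}{k}\ge\frac{t_m}{2k}\ge l+1$. Lemma \ref{lemma: higher smoothing} then yields, for $t\in I_m$,
\[
  \sup_{t\in I_m}\|\pa_t(-\Delta)^{l-1}v_k(t)\|_{L^2(\Om)}+\sup_{t\in I_m}\|(-\Delta)^l v_k(t)\|_{L^2(\Om)}+k_m^{-1}\|[(-\Delta)^{l-1}v_k]_{m-1}\|_{L^2(\Om)}\le\frac{C}{(t_m-t_{\tilde m})^l}\|v_k(t_{\tilde m})\|_{L^2(\Om)}.
\]
Combining with the node bound and $t_m-t_{\tilde m}\ge\frac{t_m}{2}$ produces the claimed $\frac{C}{t_m^{l+N/4}}$ decay.

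The main obstacle I anticipate is the rigorous justification of the restart step: one must verify that truncating $v_k$ at $t_{\tilde m}$ yields exactly the dG solution with initial datum $v_{k,\tilde m}^-$, which requires unwinding \eqref{eq: Bilinear tilde} and invoking uniqueness of the discrete solution, and one must carefully track how the index bound $l\le m$ and the step-size condition transfer under the time translation. Everything else is bookkeeping with the two factors $\frac{1}{t_{\tilde m}^{N/4}}$ and $\frac{1}{(t_m-t_{\tilde m})^l}$, both controlled by $\frac{t_m}{4}<t_{\tilde m}\le\frac{t_m}{2}$.
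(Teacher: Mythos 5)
Your proposal is correct and follows essentially the same argument as the paper: split at a node $t_{\tilde m}$ near $\frac{t_m}{2}$, bound $\|v_k(t_{\tilde m})\|_{L^2(\Omega)}$ by $C\,t_{\tilde m}^{-N/4}\|v_0\|_{\M(\Omega)}$ via Lemma \ref{lemma: smoothing 2}, then apply the restarted $L^2$ higher-order smoothing of Lemma \ref{lemma: higher smoothing} on $(t_{\tilde m}, t_m]$ and absorb both factors using $t_{\tilde m}\ge \frac{t_m}{4}$ and $t_m - t_{\tilde m}\ge \frac{t_m}{4}$. The only differences are cosmetic: you take $\tilde m$ as the largest index with $t_{\tilde m}\le\frac{t_m}{2}$ while the paper takes $\tilde m$ with $\frac{t_m}{2}\in(t_{\tilde m -1},t_{\tilde m}]$, and you make explicit the restart/uniqueness justification that the paper leaves implicit.
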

\begin{proof}
We will only establish semidiscrete smoothing estimates for measure valued initial data, the analysis for the fully discrete solution is similar. 
Combining Lemma \ref{lemma: higher smoothing} with Lemma \ref{lemma: smoothing 2},
 gives us for all $m > \tilde{m}+l$:
$$
\begin{aligned}
&\sup_{t\in I_m}\|\pa_t(-\Delta)^{l-1} v_{k}(t)\|_{L^2(\Om)}
    +\sup_{t\in I_m}\|(-\Delta)^l v_{k}(t)\|_{L^2(\Om)}
    +k_m^{-1}\|[(-\Delta)^{l-1}v_{k}]_{m-1}\|_{L^2(\Om)}\\ 
    \le & \frac{C}{(t_m - \ttm)^l}\|v_k(\ttm)\|_{L^2\Om)}\le  \frac{C}{(t_m - \ttm)^{l} \ \ttm^{\frac{N}{4}}} \mnorm{v_0}.
\end{aligned}
$$
For fixed $t_m$ with $m$ large enough such that 
$k \le \min \lbrace \frac{t_m}{4}, \frac{t_m}{2(l+1)}\rbrace$ we apply the above argument to $\ttm$ 
such that $ \frac{t_m}{2} \in (t_{\tilde{m}-1},\ttm]$. By the requirements on $k$ we obtain on the one hand
that there are at least $l$ timesteps between $\ttm$ and $t_m$, which allows us to use the smoothing
estimate of Lemma \ref{lemma: higher smoothing} for the specified $l$. On the other hand, we obtain
\[
  t_m - \ttm \ge \frac{t_m}{2}-k \ge \frac{t_m}{2} - \frac{t_m}{4} = \frac{t_m}{4}
  \qquad \text{yielding} \qquad
  (t_m - \ttm)^{-l} \le 4^l \frac{1}{t_m^l}.
\]
The choice $ \frac{t_m}{2} \le \ttm$ gives $\ttm^{- \frac{N}{4}} \le 
2^{\frac{N}{4}}t_m^{- \frac{N}{4}}$, which allows us to eliminate
$\ttm$ in the final bound and obtain
\begin{equation}
    \sup_{t\in I_m}\|\pa_t(-\Delta)^{l-1} v_{k}(t)\|_{L^2(\Om)}
    +\sup_{t\in I_m}\|(-\Delta)^l v_{k}(t)\|_{L^2(\Om)}
    +k_m^{-1}\|[(-\Delta)^{l-1}v_{k}]_{m-1}\|_{L^2(\Om)}
    \le C(l,N)t_m^{-l-\frac{N}{4}} \mnorm{v_0}.
\end{equation}
\end{proof}

\section{Smoothing type error estimates}\label{sec: smoothing error}
First we review smoothing results with the initial data in $L^2(\Omega)$ and then extend the corresponding results to ${\mathcal{M}(\Om)}$.

\subsection{Review of pointwise smoothing error estimates for $v_0\in L^2(\Om)$}
In \cite{LeyVexWal19}, we have established the following pointwise fully discrete error estimate.
\begin{prop} \label{prop:fully discrete from Linfty_to_L2}
 Let $v_0\in L^2(\Omega)$, let $v$ and $v_{kh} \in \Xkhrone$ satisfy \eqref{eq:eq_aux}
  and \eqref{eq:fully discrete heat}, respectively. Then for any subdomain $\Omega_0$ with $\overline \Omega_0\subset \Omega$ there holds
\[
    \|(v-v_{kh})(T)\|_{L^\infty(\Om_0)} \le C(T,\Om_0)\left(\ell_{kh} h^2+k^{2r+1} \right)\ltwonorm{v_0},
\]
where $\ell_{kh} = \lk + \lh$ and $C(T,\Om_0)$ is a constant that depends on $T$ and $\Om_0$ and the explicit form can be traced from the proof.
\end{prop}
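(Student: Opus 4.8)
The plan is to prove the estimate by a duality argument combined with a clean separation of the temporal and spatial discretization errors, using the parabolic smoothing estimates of Section~\ref{sec: parabolic smoothing} to compensate for the low regularity of the $L^2$ initial datum. I introduce the time-semidiscrete solution $v_k \in \Xkr$ of \eqref{eq:semidiscrete_heat_measuredata} and split
\[
  (v - v_{kh})(T) = (v - v_k)(T) + (v_k - v_{kh})(T),
\]
estimating the two contributions separately. The first term carries the temporal error and will produce the $k^{2r+1}$ contribution; the second carries the spatial error and will produce the $\ell_{kh} h^2$ contribution together with the logarithmic factors. The interior hypothesis $\overline\Omega_0 \subset \Omega$ is used only for the spatial term.

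For the temporal error I would work globally on $\Omega$, as no localization is needed. First I would establish the dG($r$) nodal superconvergence estimate $\ltwonorm{(v-v_k)(T)} \le C k^{2r+1}\ltwonorm{v_0}$. Because $v_0 \in L^2(\Omega)$ only, the high-order time derivatives of $v$ required by the classical order-$2r+1$ argument blow up near $t=0$; I would control this by splitting $(0,T)$ into an initial layer, where the crude stability of Corollary~\ref{cor: maximal parabolic initial in L1} suffices, and the remaining interval, where Lemma~\ref{lemma: higher smoothing} and \eqref{eq:continuous_smoothing} furnish the bounded high-order time regularity needed for full superconvergence. To upgrade this $L^2$ bound to the pointwise bound $\linfnorm{(v-v_k)(T)}$, I would apply the Gagliardo--Nirenberg inequality \eqref{eq:Gagliardo-Nirenberg_Omega} to $(v-v_k)(T) \in H^2(\Om)\cap H^1_0(\Om)$; here the smoothing estimates guarantee $\Delta v_k(T) \in L^2(\Om)$, and since $-\Delta$ commutes with the time discretization one also obtains $\ltwonorm{\Delta(v-v_k)(T)} \le C k^{2r+1}\ltwonorm{v_0}$ (restarting at $t=T/2$ via Lemma~\ref{lmm:composed_solution} so that the restarted data is smooth). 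Interpolating the two $L^2$ superconvergent bounds through \eqref{eq:Gagliardo-Nirenberg_Omega} yields the logarithm-free $k^{2r+1}$ term.

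The spatial error $(v_k - v_{kh})(T)$ is where the interior structure and the logarithmic factors enter. Fixing an arbitrary $x_0 \in \Omega_0$, I would represent the pointwise value through a discrete regularized Dirac delta and pass to the associated backward-in-time dual problem, whose fully discrete approximation plays the role of a space--time discrete Green's function. Using the Galerkin orthogonality relations \eqref{eq: orthogonality semidiscrete}--\eqref{eq: orthogonality fully discrete}, the pointwise error reduces to a weighted sum over the time steps of local finite element approximation errors of $v_k$. Choosing intermediate subdomains $\Omega_0 \subset\subset \Omega_1 \subset\subset \Omega$ with a smooth cutoff, an interior (Schatz--Wahlbin type) maximum-norm argument bounds this by $\lh$ times the local best approximation of $v_k(T)$ in $L^\infty(\Omega_1)$, plus a weaker global norm of the error. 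The factor $\lh$ is the classical pointwise Green's-function logarithm for piecewise linear elements, while $\lk$ arises from accumulating the Green's-function bound over the $O(T/k)$ time steps. Interior elliptic regularity gives $\|v_k(T)\|_{W^{2,\infty}(\Omega_1)} \le C(T,\Omega_1)\ltwonorm{v_0}$, so the local best approximation contributes $h^2$, and the global weak-norm term is controlled at rate $h^2$ by the standard global $L^2$ finite element estimate together with the smoothing of Lemma~\ref{lemma: smoothing 2}. Combining these gives $\linfnormn{(v_k-v_{kh})(T)} \le C(T,\Om_0)\,\ell_{kh}h^2\ltwonorm{v_0}$. Since only $H^2$ regularity on the convex domain and interior smoothness of $v_k$ are invoked, the linear case $s=1$ requires no extra smoothness assumption.

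The hard part is the spatial term, specifically constructing the fully discrete space--time discrete Green's function and proving the interior maximum-norm stability with the sharp logarithmic constants $\lh + \lk$. Two difficulties compound here: the discrete Green's function must be shown to decay away from $x_0$ and to remain controlled as it is summed across all time steps, which is delicate because the parabolic smoothing estimates carry a $t^{-1}$-type singularity at $t=0$; and the interior localization requires superapproximation and local energy (duality) estimates in space that interact with the time-stepping. I would handle the singular initial behaviour by a dyadic decomposition of $(0,T)$ near $t=0$, applying the crude stability estimates on the first dyadic block and the smoothing estimates of Lemma~\ref{lemma: higher smoothing} on the remaining blocks, and I would handle the spatial localization by the weighted-norm technique of Schatz--Wahlbin, so that the global contributions appear only in the lower-order weak norm that is already $O(h^2)$.
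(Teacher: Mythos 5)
Your treatment of the temporal error coincides with the paper's: the paper likewise reduces $\linfnorm{(v-v_k)(T)}$ to the nodal superconvergence bounds $\ltwonorm{(-\Delta)^j(v-v_k)(T)} \le C_j(T)\,k^{2r+1}\ltwonorm{v_0}$ (its estimate \eqref{eq: Delta_j v-vk in L2}) combined with the Gagliardo--Nirenberg inequality \eqref{eq:Gagliardo-Nirenberg_Omega}, so that part of your plan is on target.

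The divergence --- and the genuine gap --- is in the spatial term. The paper does not estimate $(v_k - v_{kh})(T)$ directly in $L^\infty(\Om_0)$; it inserts the Ritz projection and uses the three-way splitting \eqref{eq:error_splitting_l2}. The piece $(v_k - R_h v_k)(T)$ is a purely \emph{elliptic} object: it is the Ritz projection error of the single, interior-smooth function $v_k(T)$, so the static interior maximum-norm estimate of Schatz--Wahlbin applies verbatim and produces the factor $\lh\, h^2$; this is the only place where the hypothesis $\overline\Omega_0 \subset \Omega$ enters. The piece $(R_h v_k - v_{kh})(T)$ lies in $V_h^1$, so the discrete Gagliardo--Nirenberg inequality \eqref{eq: discrete Gagliardo-Nirenber} is applicable, and its ingredients $\|(-\Delta_h)^j(R_h v_k - v_{kh})(T)\|_{L^2(\Om)}$, $j=0,1$, are bounded by $\lk\, h^2 \ltwonorm{v_0}$ through \emph{global} $L^2$ duality arguments --- no localization, no Green's function, no weighted norms. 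Your plan instead requires a fully discrete parabolic Green's function for cG(1)dG($r$), with pointwise decay away from $x_0$, weighted-norm superapproximation, and control under summation over variable time steps. That is precisely the machinery which the paper's introduction identifies as available only in restricted settings (Schatz--Thom\'ee--Wahlbin: two space dimensions, piecewise linears, strongly A-stable single-step schemes with uniform steps; Hansbo: smooth domains, uniform steps); for dG($r$) on the highly varying time meshes admitted here it does not exist off the shelf, and developing it would be a substantial piece of work in itself, with no guarantee that the sharp factor $\lh + \lk$ survives. Note also that your direct target $v_k - v_{kh}$ is not a finite element function, so the discrete Gagliardo--Nirenberg shortcut is closed to you; this is exactly why the insertion of $R_h v_k$, rather than your two-term splitting, is the load-bearing idea of the paper's proof.
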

The  proof of the above result was based on the following splitting  of the error
\begin{equation}\label{eq:error_splitting_l2}
(v-v_{kh})(T) = (v-v_{k})(T)+(R_hv_k-v_{kh})(T)+(v_k-R_hv_{k})(T).
\end{equation}
Then each term was treated separately. 
The first error term was estimated in \cite[Theorem 3.8]{LeyVexWal19} by
\begin{equation}\label{eq:semidiscrete_error}
    \linfnorm{(v-v_k)(T)} \le C(T) k^{2r+1}\ltwonorm{v_0},
\end{equation}
with $C(T) \sim T^{-(2r+1+\frac{N}{4})}$. The above estimate follows from
 (see \cite[Lem.~7.2]{LeyVexWal19})
\begin{equation}\label{eq: Delta_j v-vk in L2}
\ltwonorm{(-\Delta)^j(v-v_k)(T)} \le C_j(T) k^{2r+1}\ltwonorm{v_0},\quad j=0,1,\dots.
\end{equation}
The second error term in \eqref{eq:error_splitting_l2} satisfies,
\begin{equation}\label{eq: estimates R_hvk-vkh in PL}
   \|(R_hv_k-v_{kh})(T)\|_{L^\infty(\Om)} \le C(T)\lk h^2 \ltwonorm{v_0},
\end{equation}
which followed from (see \cite[Lem.~8.2--8.3]{LeyVexWal19})
\begin{equation}
\|(-\Delta_h)^j(R_hv_k-v_{kh})(T)\|_{L^2(\Om)} \le C_j(T)\lk h^2 \ltwonorm{v_0},\quad j=0,1,
\end{equation}
and the discrete  Gagliardo-Nirenberg  inequality 
\eqref{eq: discrete Gagliardo-Nirenber}. Here, we point out that the treatment of the first and the second
terms of \eqref{eq:error_splitting_l2} do not require the condition $\overline \Omega_0\subset \Omega$,
they are global in nature. 
Finally, the estimate of the last
term in \eqref{eq:error_splitting_l2} follows from the interior elliptic error estimate (cf. \cite{SchatzWahlbin_1977})
\begin{equation}
\|(v_k-R_hv_k)(T)\|_{L^\infty(\Om_0)} \le C(T,\Om_0)\lh h^2 \ltwonorm{v_0}.
\end{equation}

\subsection{Pointwise smoothing error estimates for $v_0\in \M(\Om)$}\label{subsec:pointwise_smoothing_meas}
We now turn towards proving the pointwise error estimate for measure valued initial data.
To this end, first recall that in \cite[Lemma 5.1]{LeyVexWal19} we have shown the following
$L^2$ error estimate for parabolic problems with initial data in $\M(\Om)$, where for the spatial
estimate we impose a condition on the support of $v_0$.
\begin{lemma}\label{lemm:L2error_measuredata}
  Let $v_0 \in \M(\Om)$ with $\supp v_0 \subset \Om_0$ for some subdomain 
  $\Om_0 \subset \overline \Om_0 \subset \Om$ and let $v$, $v_k \in \tXk$ and $v_{kh} \in \Xkhrone$ 
  the continuous, semidiscrete and fully discrete solutions to \eqref{eq:eq_aux},
  \eqref{eq:semidiscrete_heat_measuredata} and \eqref{eq:fully discrete heat} respectively.
  Then there hold the estimates
  \begin{align*}
    \|(v - v_k)(T)\|_{L^2(\Om)} & \le C(T) k^{2r+1}\|v_0\|_{\M(\Om)}\\
    \|(v_k - v_{kh})(T)\|_{L^2(\Om)} & \le C(\Omega_0,T) \ell_{kh} h^2\|v_0\|_{\M(\Om)},
  \end{align*}
where $\ell_{kh} = \lk + \lh$ and $C(T,\Omega_0)$ is a constant that depends on $T$ and $\Omega_0$ and the explicit form can be traced from the proof.
\end{lemma}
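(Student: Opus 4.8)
The plan is to prove both estimates by duality, transferring the $L^2$ error with measure initial data onto \emph{pointwise} smoothing errors of the backward (adjoint) problem with $L^2$ final data, for which the estimates reviewed in Proposition~\ref{prop:fully discrete from Linfty_to_L2} and its constituents \eqref{eq:semidiscrete_error} and \eqref{eq: estimates R_hvk-vkh in PL} are already at our disposal. Fix $g\in L^2(\Om)$ with $\ltwonorm{g}=1$ and let $z$, $z_k\in\hXk$ and $z_{kh}\in\Xkhrone$ be the continuous, semidiscrete and fully discrete solutions of the backward heat equation with final datum $g$, defined through the dual form \eqref{eq:B_dual} of $B$, that is $B(\psi_k,z_k)=(\psi_{k,M}^-,g)_\Om$ for all $\psi_k\in\tXk$ and likewise $B(\psi_{kh},z_{kh})=(\psi_{kh,M}^-,g)_\Om$ for all $\psi_{kh}\in\Xkhrone$. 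Testing the primal equations \eqref{eq:semidiscrete_heat_measuredata} and \eqref{eq:fully discrete heat} with $z_k$ and $z_{kh}$, and conversely the adjoint equations with $\psi=v_k$ and $\psi=v_{kh}$, produces the representations
\[
  (v_k(T),g)_\Om=\langle v_0,z_{k,0}^+\rangle,\qquad (v_{kh}(T),g)_\Om=\langle v_0,z_{kh,0}^+\rangle,
\]
while the identity of Theorem~\ref{thrm:existence_very_weak_solution} applied to $\varphi=z\in\Phi$ (for which $\partial_t z+\Delta z=0$) gives $(v(T),g)_\Om=\langle v_0,z(0)\rangle$. These pairings are legitimate since $z_{k,0}^+,z_{kh,0}^+\in W_0^{1,p'}(\Om)\hookrightarrow C_0(\Om)$, the same embedding that underlies the definition of $\hXk$.

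Subtracting the representations and exploiting $\supp v_0\subset\Om_0$, the errors at the endtime reduce to pointwise errors of the adjoint approximation at the \emph{smoothed} endpoint $t=0$:
\[
  ((v-v_k)(T),g)_\Om=\langle v_0,z(0)-z_{k,0}^+\rangle\le\mnorm{v_0}\,\linfnorm{z(0)-z_{k,0}^+},
\]
\[
  ((v_k-v_{kh})(T),g)_\Om=\langle v_0,z_{k,0}^+-z_{kh,0}^+\rangle\le\mnorm{v_0}\,\linfnormn{z_{k,0}^+-z_{kh,0}^+}.
\]
Under the time reversal $t\mapsto T-t$ the adjoint problem becomes a forward homogeneous problem with $L^2$ datum $g$, and evaluation at $t=0$ becomes evaluation at the endtime; the estimates of Proposition~\ref{prop:fully discrete from Linfty_to_L2} therefore apply verbatim. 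For the first difference the global semidiscrete bound \eqref{eq:semidiscrete_error} yields $\linfnorm{z(0)-z_{k,0}^+}\le C(T)k^{2r+1}\ltwonorm{g}$, so no restriction on $\Om_0$ is needed. For the second difference I split $z_k-z_{kh}=(z_k-R_hz_k)+(R_hz_k-z_{kh})$: the global estimate \eqref{eq: estimates R_hvk-vkh in PL} controls $\linfnorm{(R_hz_k-z_{kh})(0)}\le C(T)\lk h^2\ltwonorm{g}$, whereas the interior elliptic estimate (cf.~\cite{SchatzWahlbin_1977})---the only place where $\overline\Om_0\subset\Om$ is used---gives $\linfnormn{(z_k-R_hz_k)(0)}\le C(T,\Om_0)\lh h^2\ltwonorm{g}$, and the two combine to $C(T,\Om_0)\ell_{kh}h^2\ltwonorm{g}$.

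Inserting these bounds and taking the supremum over $g$ with $\ltwonorm{g}=1$ gives the two claimed estimates. I expect the main obstacle to be the rigorous construction and regularity analysis of the discrete adjoint solutions: one must check that $z_k,z_{kh}$ are well defined through \eqref{eq:B_dual}, that they carry enough smoothing at the far endpoint $t=0$ for the pointwise and interior elliptic estimates to be invoked, and that all the measure pairings $\langle v_0,\cdot\rangle$ are well posed, which again hinges on $W_0^{1,p'}(\Om)\hookrightarrow C_0(\Om)$ and on the orthogonality relations \eqref{eq: orthogonality semidiscrete}, \eqref{eq: orthogonality fully discrete}. The remaining work is the careful bookkeeping of the time-reversal symmetry so that the forward endtime estimates \eqref{eq:semidiscrete_error} and \eqref{eq: estimates R_hvk-vkh in PL} may be transferred to the backward problem without loss.
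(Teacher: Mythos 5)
Your proof is correct and takes essentially the same route as the paper's own proof: the paper quotes this lemma from \cite[Lemma 5.1]{LeyVexWal19}, where it is established by precisely this duality argument --- representing $((v-v_k)(T),g)$ and $((v_k-v_{kh})(T),g)$ through discrete backward solutions with $L^2$ final data, and then invoking, for the time-reversed problem, the global semidiscrete bound \eqref{eq:semidiscrete_error} (no support condition needed) and the splitting into \eqref{eq: estimates R_hvk-vkh in PL} plus the interior Ritz-projection estimate (which is where $\supp v_0 \subset \Om_0$ enters).
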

Our main result can now be obtained directly by introducing an auxiliary solution
and the smoothing results presented in Section \ref{sec: parabolic smoothing}.
We first prove the error estimate for the spatial discretization. The proof of the error estimate for the 
time semidiscretization follows the same steps under milder assumptions, see Lemma 
\ref{thm:smoothing_est_time} below.
\begin{theorem} \label{thm:fully discrete from Linfty_to_measure}
 Let $v_0\in \M(\Omega)$, let $v_k \in \tXk$ and $v_{kh} \in \Xkhrone$ satisfy 
 \eqref{eq:semidiscrete_heat_measuredata}
  and \eqref{eq:fully discrete heat}, respectively. Then for any subdomain $\Omega_0$ with 
  $\overline \Omega_0 \subset \Omega$ and $\supp v_0 \subset \Omega_0$ there holds
\[
    \|(v_k-v_{kh})(T)\|_{L^\infty(\Om_0)} \le C(T,\Omega_0)\ell_{kh} h^2\|v_0\|_{\mathcal{M}(\Om)},
\]
where $\ell_{kh} = \lk + \lh$ and $C(T,\Omega_0)$ is a constant that depends on $T$ and $\Omega_0$ and the explicit form can be traced from the proof.
\end{theorem}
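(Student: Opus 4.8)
The plan is to peel off the singular behaviour of the measure datum near $t=0$ by means of the parabolic smoothing of Section \ref{sec: parabolic smoothing}, and then to invoke the already-established $L^2$-data theory on the remaining time interval. Fix a node $\ttm$ equal to the largest mesh point not exceeding $T/2$; the mesh conditions (i)--(iii) ensure that $(\ttm,T]$ still contains sufficiently many time steps and that $T-\ttm$ is comparable to $T$, so that every smoothing estimate of Section \ref{sec: parabolic smoothing} is available on this interval. By the truncation identity \eqref{eq: Bilinear tilde}, the restrictions of $v_k$ and $v_{kh}$ to $(\ttm,T]$ are precisely the semidiscrete and fully discrete dG solutions issued from the nodal values $v_k(\ttm)$ and $v_{kh}(\ttm)$, respectively. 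The decisive gain is that, by Lemma \ref{lemma: smoothing 2}, $v_k(\ttm)\in\Ltwo$ with $\ltwonorm{v_k(\ttm)}\le C\,\ttm^{-N/4}\mnorm{v_0}\le C\,T^{-N/4}\mnorm{v_0}$, so on $(\ttm,T]$ we are reduced to genuinely $L^2$ initial data.

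Since the two restricted solutions carry different data, I would introduce an auxiliary fully discrete solution $\hat v_{kh}$ on $(\ttm,T]$ starting from the same $L^2$ datum $v_k(\ttm)$ as the semidiscrete solution, and split
\[
  (v_k-v_{kh})(T) = \underbrace{(v_k-\hat v_{kh})(T)}_{L^2\text{-data error}} + \underbrace{(\hat v_{kh}-v_{kh})(T)}_{\text{data mismatch}}.
\]
The first term is exactly the semidiscrete-minus-fully-discrete error on $(\ttm,T]$ for the $L^2$ datum $v_k(\ttm)$. The two ingredients used to prove Proposition \ref{prop:fully discrete from Linfty_to_L2} --- the global bound on $R_h v_k - v_{kh}$ together with the interior Ritz estimate on $v_k-R_h v_k$ --- combine to bound $\linfnormn{(v_k-\hat v_{kh})(T)}$ directly, and, importantly, require only $\overline\Om_0\subset\Om$ and no condition on the support. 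Hence
\[
  \linfnormn{(v_k-\hat v_{kh})(T)}\le C(T,\Om_0)\,\ell_{kh}h^2\,\ltwonorm{v_k(\ttm)}\le C(T,\Om_0)\,\ell_{kh}h^2\,\mnorm{v_0}.
\]

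For the second term, $\hat v_{kh}-v_{kh}$ is a fully discrete solution on $(\ttm,T]$ with discrete initial value $P_h v_k(\ttm)-v_{kh}(\ttm)=P_h\big((v_k-v_{kh})(\ttm)\big)$, where I used $v_{kh}(\ttm)\in V_h^s$. Applying the discrete $L^\infty$ smoothing of Corollary \ref{cor: discrete smoothing in Linfty} over $(\ttm,T]$ and the $L^2$-contractivity of $P_h$ gives
\[
  \linfnorm{(\hat v_{kh}-v_{kh})(T)}\le C\,(T-\ttm)^{-N/4}\,\ltwonorm{(v_k-v_{kh})(\ttm)}.
\]
This is where the support hypothesis enters: Lemma \ref{lemm:L2error_measuredata}, read with final time $\ttm$ and using $\supp v_0\subset\Om_0$, bounds $\ltwonorm{(v_k-v_{kh})(\ttm)}\le C(T,\Om_0)\,\ell_{kh}h^2\,\mnorm{v_0}$. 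Since $(T-\ttm)^{-N/4}\le C\,T^{-N/4}$, adding the two contributions and using $\Om_0\subset\Om$ yields the asserted estimate with a constant of the form $C(T,\Om_0)$.

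The routine parts are the bookkeeping of the $T$-dependent constants under the interval shift and the verification that $(\ttm,T]$ contains enough time steps for the smoothing lemmas. I expect the only genuinely delicate point to be the first step: using \eqref{eq: Bilinear tilde} to certify that the restrictions of $v_k$ and $v_{kh}$ to $(\ttm,T]$ are bona fide dG solutions with nodal data $v_k(\ttm)$ and $v_{kh}(\ttm)$ --- so that the $L^2$-data results apply verbatim --- while correctly tracking that the first term needs only $\overline\Om_0\subset\Om$, whereas the data-mismatch term needs $\supp v_0\subset\Om_0$.
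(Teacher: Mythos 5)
Your proposal is correct and follows essentially the same route as the paper: the same split at a node near $T/2$, the same auxiliary fully discrete solution $\hat v_{kh}$ started from $v_k(\ttm)$, the $L^2$-data spatial ingredients of Proposition \ref{prop:fully discrete from Linfty_to_L2} for $v_k-\hat v_{kh}$, and Lemma \ref{lemm:L2error_measuredata} (where the support hypothesis enters) combined with discrete smoothing for the data-mismatch term $\hat v_{kh}-v_{kh}$. The only cosmetic difference is that you invoke the packaged $L^\infty$ smoothing of Corollary \ref{cor: discrete smoothing in Linfty}, while the paper re-assembles it from the discrete Gagliardo--Nirenberg inequality and Lemmas \ref{lemma: homogeneous smoothing dG_r fully discrete} and \ref{lemma: higher smoothing}; your explicit tracking of the $L^2$-projection in the mismatch datum is a detail the paper glosses over.
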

\begin{proof}
  As done in the proofs of the smoothing results, we begin by splitting the time interval. To this end
  let $\tilde m$ be such that $ \frac{T}{2} \in I_{\tilde m}$.
  We introduce a fully discrete auxiliary state $\hat v_{kh} \in \Xkhrone$, defined by
  \begin{equation*}
    B(\hat v_{kh},\varphi_{kh}) = (v_{k,\tilde m-1}^-,\varphi_{kh,\tilde m-1}^+)_\Omega \quad 
    \text{for all} \quad \varphi_{kh} \in \Xkhrone.
  \end{equation*}
  Note that by definition $\hat v_{kh} \equiv 0$ on $I_1 \cup ... \cup I_{\tilde m -1}$ and it
  satisfies a discrete problem on $I_{\tilde m} \cup ... \cup I_M$  
  with initial condition $v_{k,\tilde m-1}^-$ at time $t_{\tilde m -1}$.
  By the triangle inequality, we obtain
  \begin{equation}
    \|(v_k-v_{kh})(T)\|_{L^\infty(\Om_0)}
    \le \|(v_k- \hat v_{kh})(T)\|_{L^\infty(\Om_0)}
    + \|(\hat v_{kh} - v_{kh})(T)\|_{L^\infty(\Om_0)},
  \end{equation}
  where for the first term, we obtain with Proposition \ref{prop:fully discrete from Linfty_to_L2}
  and the semidiscrete parabolic smoothing result of Lemma \ref{lemma: smoothing 2}
  \begin{align}
    \|(v_k- \hat v_{kh})(T)\|_{L^\infty(\Om_0)}
    & \le C(T-t_{\tilde m -1},\Omega_0) \ell_{kh} h^2 \|v_{k,\tilde m-1}^-\|_{L^2(\Omega)}\\
    & \le C(T-t_{\tilde m -1},\Omega_0) t_{\tilde m -1}^{- \frac{N}{4}} \ell_{kh} 
    h^2\|v_0\|_{\M(\Omega)}.
  \end{align}
  For the second error term, we observe that the difference $\hat v_{kh} - v_{kh}$ satisfies 
  a fully discrete parabolic equation on the intervals $I_{\tilde m} \cup ... \cup I_M$ for the initial
  data $v_{k,\tilde m -1}^- - v_{kh,\tilde m -1}^-$. Hence, the discrete Gagliardo-Nirenberg inequality
  \eqref{eq: discrete Gagliardo-Nirenber} and the fully discrete smoothing results of Lemmas 
  \ref{lemma: homogeneous smoothing dG_r fully discrete} and \ref{lemma: higher smoothing} yield
  \begin{align*}
    \|(\hat v_{kh} - v_{kh})(T)\|_{L^\infty(\Om_0)} 
    & \le C\|(\hat v_{kh} - v_{kh})(T)\|_{L^2(\Om)}^{ \frac{1}{2} }
    \|\Delta_h (\hat v_{kh} - v_{kh})(T)\|_{L^2(\Om)}^{ \frac{1}{2}}\\
    & \le C (T-t_{\tilde m -1})^{-\frac{1}{2} - \frac{N}{4}}
    \|v_{k,\tilde m -1}^- - v_{kh,\tilde m -1}^-\|_{L^2(\Omega)}
  \end{align*}
  We apply Lemma \ref{lemm:L2error_measuredata} in order to estimate the $L^2$-error of the full discretization
  at the intermediate point in time, which yields
  \begin{equation}
    \|(\hat v_{kh} - v_{kh})(T)\|_{L^\infty(\Om_0)}
    \le C(t_{\tilde m-1},\Omega_0) (T-t_{\tilde m -1})^{-\frac{1}{2} - \frac{N}{4}} 
    \ell_{kh}h^2\|v_0\|_{\M(\Omega)}.
  \end{equation}
  Since the assumptions on $k$ and $\tilde m$ yield $\frac{T}{4} \le t_{\tilde m -1} \le \frac{T}{2}$ and 
  $\frac{T}{2} \le T- t_{\tilde m -1} \le \frac{3T}{4}$, as before we can replace all quantities involving $t_{\tilde m-1}$
  by ones only dependent of $T$, which concludes the proof.
\end{proof}
 Note that by exactly the same technique, we can also derive the corresponding error estimate for the
 semidiscrete problem, which is global in $\Omega$ and no constraint on $\supp v_0$ is required.
 This is due to the fact,
 that the semidiscrete results of \eqref{eq:semidiscrete_error} and \cite[Lemma 5.1]{LeyVexWal19} 
 hold in this more general setting. There holds the following result.

\begin{lemma}\label{thm:smoothing_est_time}
Let $v_0\in \mathcal{M}(\Om) $ and $v$ and $v_k \in \tXk$ be the very weak and semidiscrete solutions to
\eqref{eq:veryweak_heat} and \eqref{eq:semidiscrete_heat_measuredata} respectively.
Then there holds
\[
    \linfnorm{(v-v_k)(T)} \le C(T) k^{2r+1}\|v_0\|_{\mathcal{M}(\Om)},
\]
with $C(T) \sim T^{-(2r+1+\frac{N}{2})}$.
\end{lemma}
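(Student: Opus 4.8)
The plan is to mimic the proof of Theorem \ref{thm:fully discrete from Linfty_to_measure}, but now entirely at the time‑semidiscrete level. The essential simplification is that every ingredient I invoke here is global in $\Omega$: the semidiscrete $L^2\to L^\infty$ error estimate \eqref{eq:semidiscrete_error}, the semidiscrete $L^2$ error for measure data in Lemma \ref{lemm:L2error_measuredata}, and the discrete $L^\infty$ smoothing in Corollary \ref{cor: discrete smoothing in Linfty} all hold without any constraint relating $\supp v_0$ to a proper subdomain. Consequently no interior elliptic estimate is needed and no condition on $\supp v_0$ enters, which is exactly why the conclusion is global.

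First I would fix $\tilde m$ with $\frac T2\in I_{\tilde m}$; as in the proof of Theorem \ref{thm:fully discrete from Linfty_to_measure}, the mesh conditions (i)--(iii) then give $\frac T4\le t_{\tilde m-1}\le\frac T2$ and $\frac T2\le T-t_{\tilde m-1}\le\frac{3T}4$, and guarantee that $(t_{\tilde m-1},T]$ contains enough time steps to apply the discrete smoothing results below. Since $t_{\tilde m-1}>0$, Lemma \ref{lem: L2 smoothign} yields $v(t_{\tilde m-1})\in L^2(\Omega)$ with $\ltwonorm{v(t_{\tilde m-1})}\le C\,t_{\tilde m-1}^{-N/4}\mnorm{v_0}$. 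I then introduce the auxiliary semidiscrete state $\hat v_k\in\tXk$, vanishing on $I_1\cup\cdots\cup I_{\tilde m-1}$ and defined on the remaining intervals by
\[
  B(\hat v_k,\varphi_k)=(v(t_{\tilde m-1}),\varphi_{k,\tilde m-1}^+)_\Omega\qquad\text{for all }\varphi_k\in\hXk,
\]
that is, the dG($r$) approximation, restarted at $t_{\tilde m-1}$, of the continuous solution with data $v(t_{\tilde m-1})$. The triangle inequality then splits
\[
  \linfnorm{(v-v_k)(T)}\le\linfnorm{(v-\hat v_k)(T)}+\linfnorm{(\hat v_k-v_k)(T)}.
\]

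For the first term, Lemma \ref{lmm:composed_solution} identifies $v$ on $(t_{\tilde m-1},T]$ with the weak solution for the $L^2$ data $v(t_{\tilde m-1})$, of which $\hat v_k$ is precisely the semidiscretization; hence \eqref{eq:semidiscrete_error} (whose constant is $\sim T^{-(2r+1+N/4)}$), applied on $(t_{\tilde m-1},T]$ and combined with the smoothing bound for $\ltwonorm{v(t_{\tilde m-1})}$, gives
\[
  \linfnorm{(v-\hat v_k)(T)}\le C\,(T-t_{\tilde m-1})^{-(2r+1+\frac N4)}\,t_{\tilde m-1}^{-\frac N4}\,k^{2r+1}\mnorm{v_0}.
\]
For the second term, the truncation identity \eqref{eq: Bilinear tilde} shows that $v_k$ restricted to $(t_{\tilde m-1},T]$ is the semidiscrete solution with data $v_{k,\tilde m-1}^-$, so $\hat v_k-v_k$ is the semidiscrete solution on those intervals with $L^2$ data $(v-v_k)(t_{\tilde m-1})$. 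Applying the discrete $L^\infty$ smoothing of Corollary \ref{cor: discrete smoothing in Linfty} to the restarted problem, together with the semidiscrete $L^2$ error for measure data of Lemma \ref{lemm:L2error_measuredata} used on the truncated interval $(0,t_{\tilde m-1}]$ (whose constant is again $\sim t_{\tilde m-1}^{-(2r+1+N/4)}$), yields
\[
  \linfnorm{(\hat v_k-v_k)(T)}\le C\,(T-t_{\tilde m-1})^{-\frac N4}\,t_{\tilde m-1}^{-(2r+1+\frac N4)}\,k^{2r+1}\mnorm{v_0}.
\]

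Finally I would use $\frac T4\le t_{\tilde m-1}\le\frac T2$ and $\frac T2\le T-t_{\tilde m-1}\le\frac{3T}4$ to replace every factor involving $t_{\tilde m-1}$ or $T-t_{\tilde m-1}$ by $T$, whereupon both contributions collapse to $C\,T^{-(2r+1+\frac N2)}k^{2r+1}\mnorm{v_0}$, matching the claim with $C(T)\sim T^{-(2r+1+\frac N2)}$. I expect the only delicate point to be the bookkeeping of the time powers: one must check that in each term the intermediate $L^2$ norm (resp.\ the discrete smoothing) contributes exactly $T^{-N/4}$, and that this combines with an error constant of order $T^{-(2r+1+N/4)}$ to give the exponent $2r+1+\frac N2$; the remaining routine point is verifying that $(t_{\tilde m-1},T]$ indeed carries enough steps for Corollary \ref{cor: discrete smoothing in Linfty}.
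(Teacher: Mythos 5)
Your proposal is correct and is precisely the argument the paper intends: the paper gives no separate proof of this lemma, stating only that it follows ``by exactly the same technique'' as Theorem \ref{thm:fully discrete from Linfty_to_measure}, and your splitting at $t_{\tilde m-1}$ with the restarted auxiliary semidiscrete solution, using \eqref{eq:semidiscrete_error} plus Lemma \ref{lem: L2 smoothign} for one term and Corollary \ref{cor: discrete smoothing in Linfty} plus the (global) semidiscrete estimate of Lemma \ref{lemm:L2error_measuredata} for the other, is the faithful semidiscrete instantiation of that technique. Your bookkeeping of the time powers, yielding $C(T)\sim T^{-(2r+1+\frac N2)}$ from both contributions, is also consistent with the stated constant.
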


\section{Higher order space discretizations}\label{sec: higher order}
Our main result from the previous section, Theorem \ref{thm:fully discrete from Linfty_to_measure}, was established for piecewise linear finite elements only and does not require any additional smoothness assumptions on the solutions beyond $H^2$ regularity that is provided by the convexity of the domain. If additional regularity is available, for example, 
\begin{equation}\label{ass:regularity}
    |v|_{H^3(\Omega)} \le C \|\Delta v\|_{H^1_0(\Omega)}
\end{equation}
for any $v \in H^1_0(\Omega)$ with $\Delta v \in H^1_0(\Om)$,
then the results of Proposition \ref{prop:fully discrete from Linfty_to_L2}
can be extended (with an improved rate)
 to the case of
quadratic Lagrange finite elements which we will denote by $V_h^2$ in this section. 
\begin{remark}\label{rem:bc_laplacian_semidiscrete}
  Since due to Remark \ref{rem:semidiscrete_spaces} for each $t \in (t_1,T]$, the solution 
  $v_k$ to the semidiscrete problem (\ref{eq:semidiscrete_heat_measuredata}) 
  satisfies $v_k(t) \in H^1_0(\Om)$, one can also show straightforwardly that 
  \begin{align*}
    \Delta v_k(t), \ \partial_t \Delta v_k(t) \in H^1_0(\Omega) \ \text{ for all } t\in I_m, \ m \ge 2 
    \quad \text{and} \quad
    \Delta^2 v_k(t) \in H^1_0(\Omega) \ \text{ for all } t\in I_m, \ m \ge 3.
  \end{align*}
\end{remark}

Additional regularity is available on special domains, for example on rectangles,
right or equilateral triangles. We make the following assumption of the domain $\Omega$.

\begin{assumption}\label{assump:regularityH3}
For every $u \in H^1_0(\Omega)$ with $\Delta u \in H^1_0(\Omega)$ there holds $u \in H^3(\Omega)$. Moreover, there exists a constant $C$ independent of $u$ such that 
\begin{equation}\label{ass:regularity 2}
    \|u\|_{H^{3}(\Omega)} \le C \|\na\Delta u\|_{L^2(\Omega)}.
\end{equation}
\end{assumption}
\begin{example}
This assumption holds for example on a rectangle, see \cite[Lemma 2.4]{HellOstermannSandbichler:2015}.
In this case the solution $u$ to the elliptic equation 
\begin{align*}
	-\Delta u &=f\quad \text{in}\ \Omega\\
	u &= 0\quad \text{on}\ \partial\Omega,
\end{align*}
with $f \in H^1_0(\Omega)$ possesses the $H^3(\Omega)$ regularity with the estimate
\[
\|u\|_{H^{3}(\Omega)} \le C \|\na f\|_{L^2(\Omega)}
\]
holds. Thus,  Assumption \ref{assump:regularityH3} is satisfied in this case. 
\end{example}

\begin{lemma}\label{thm:h3regularity}
Let $\Omega$ satisfy Assumption \ref{assump:regularityH3}.
\begin{enumerate}
\item Let $u,\Delta u\in H^1_0(\Omega)$, and $\Delta^2 u \in L^2(\Omega)$. Then there holds
\begin{equation}\label{eq:est:H3}
\|u\|^2_{H^{3}(\Omega)} \le C \|\Delta u\|_{L^2(\Omega)}\|\Delta^2 u\|_{L^2(\Omega)}.
\end{equation}
\item Let $\Omega_0$ be a subdomain with $\overline \Omega_0 \subset \Omega$, let $u,\Delta u, \Delta^2 u\in H^1_0(\Omega)$, and $\Delta^3 u \in L^2(\Omega)$. Then there holds
\begin{equation}\label{eq:est:H5}
\|u\|^2_{H^{5}(\Omega_0)} \le C \|\Delta^2 u\|_{L^2(\Omega)}\|\Delta^3 u\|_{L^2(\Omega)}.
\end{equation}
\end{enumerate}
\end{lemma}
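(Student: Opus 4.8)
The plan is to prove part (1) by combining the basic regularity estimate \eqref{ass:regularity 2} with a single integration by parts, and then to obtain part (2) by applying part (1) to $\Delta u$ and bootstrapping the resulting $H^3$-control of $\Delta u$ to $H^5$-control of $u$ via interior elliptic regularity.

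For part (1), observe that the hypotheses $u,\Delta u\in H^1_0(\Omega)$ are exactly those required by Assumption \ref{assump:regularityH3}, so \eqref{ass:regularity 2} yields $\|u\|_{H^3(\Omega)}\le C\|\na\Delta u\|_{L^2(\Omega)}$. To bound the right-hand side I would apply Green's identity to $\phi:=\Delta u$, which lies in $H^1_0(\Omega)$ and satisfies $\Delta\phi=\Delta^2 u\in L^2(\Omega)$: this gives $\|\na\Delta u\|_{L^2(\Omega)}^2=-(\Delta u,\Delta^2 u)_\Omega\le\|\Delta u\|_{L^2(\Omega)}\|\Delta^2 u\|_{L^2(\Omega)}$, the boundary term vanishing precisely because $\phi\in H^1_0(\Omega)$. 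Squaring the regularity estimate and inserting this bound produces \eqref{eq:est:H3}.

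For part (2), set $w:=\Delta u$. Then $w,\Delta w=\Delta^2 u\in H^1_0(\Omega)$ and $\Delta^2 w=\Delta^3 u\in L^2(\Omega)$, so part (1) applies to $w$ and gives $\|\Delta u\|_{H^3(\Omega)}^2\le C\|\Delta^2 u\|_{L^2(\Omega)}\|\Delta^3 u\|_{L^2(\Omega)}$. It remains to transfer $H^3$-control of $\Delta u$ into $H^5$-control of $u$. I would invoke interior elliptic regularity for $-\Delta u=-\Delta u$, reading $\Delta u$ as a datum in $H^3(\Omega)$: for nested subdomains $\Omega_0\Subset\Omega_1\Subset\Omega$ there holds $\|u\|_{H^5(\Omega_0)}\le C(\Omega_0,\Omega_1)\big(\|\Delta u\|_{H^3(\Omega_1)}+\|u\|_{L^2(\Omega_1)}\big)$. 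The restriction to a proper subdomain is essential, since it lets us bypass any global boundary regularity of $\Omega$, which need not extend to $H^5$ near corners; this is exactly why part (2) is only an interior estimate whereas part (1) is global.

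Finally I would absorb the lower-order term. Repeatedly combining Poincaré with the identity $\|\na\psi\|_{L^2(\Omega)}^2=-(\psi,\Delta\psi)_\Omega$ for $\psi\in H^1_0(\Omega)$ gives $\|\psi\|_{L^2(\Omega)}\le C\|\Delta\psi\|_{L^2(\Omega)}$; chaining this through $u,\Delta u,\Delta^2 u\in H^1_0(\Omega)$ yields both $\|u\|_{L^2(\Omega)}\le C\|\Delta^2 u\|_{L^2(\Omega)}$ and $\|u\|_{L^2(\Omega)}\le C\|\Delta^3 u\|_{L^2(\Omega)}$, hence $\|u\|_{L^2(\Omega)}^2\le C\|\Delta^2 u\|_{L^2(\Omega)}\|\Delta^3 u\|_{L^2(\Omega)}$. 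Squaring the interior estimate and inserting this together with the bound for $\|\Delta u\|_{H^3}^2$ gives \eqref{eq:est:H5}. The only genuinely delicate point is the interior regularity step on the nested subdomains; everything else reduces to Green's identity and Poincaré.
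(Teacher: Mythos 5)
Your proof is correct and follows essentially the same route as the paper: part (1) via Green's identity applied to $\Delta u$ combined with Assumption \ref{assump:regularityH3}, and part (2) by applying part (1) to $\Delta u$, invoking the interior elliptic regularity estimate, and absorbing the lower-order $\|u\|_{L^2(\Omega)}$ term through iterated Poincar\'e-type bounds $\|\Delta^j u\|_{L^2(\Omega)} \le C\|\Delta^{j+1} u\|_{L^2(\Omega)}$. The only cosmetic difference is that you chain the lower-order term directly to $\|\Delta^2 u\|_{L^2(\Omega)}\|\Delta^3 u\|_{L^2(\Omega)}$, whereas the paper first passes through $\|\Delta u\|_{L^2(\Omega)}\|\Delta^2 u\|_{L^2(\Omega)}$.
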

\begin{proof}
\begin{enumerate}
\item For $v\in H^1_0(\Omega)$ with $\Delta v \in L^2(\Omega)$ one directly obtains
\[
\|\nabla v\|^2_{L^2(\Omega)} \le \|v\|_{L^2(\Omega)} \|\Delta v\|_{L^2(\Omega)}.
\]
Due to $\Delta u \in H^1_0(\Omega)$ and $\Delta^2 u \in L^2(\Omega)$ this inequality can be applied to $v = \Delta u$ leading to
\[
\|\nabla \Delta u\|^2_{L^2(\Omega)} \le \|\Delta u\|_{L^2(\Omega)} \|\Delta^2 u\|_{L^2(\Omega)}.
\]
Thus, Assumption \ref{assump:regularityH3} implies the desired estimate.
\item Using a higher interior regularity result, see \cite[Chapter 6.3, Theorem 2]{evans_partial_2010},
  we obtain
\[
  \|u\|_{H^{5}(\Omega_0)} \le C(\|\Delta u\|_{H^{3}(\textcolor{red}{\Omega})} + \|u\|_{L^2(\Omega)}).
\]
Since $\Delta^2 u \in H^1_0(\Omega)$ and $\Delta^3 u \in L^2(\Omega)$ we can apply \eqref{eq:est:H3} to $\Delta u$ leading to
\[
\|\Delta u\|^2_{H^{3}(\Omega)} \le C \|\Delta^2 u\|_{L^2(\Omega)}\|\Delta^3 u\|_{L^2(\Omega)}.
\]
This leads to
\[
\|u\|^2_{H^{5}(\Omega_0)} \le C \|\Delta^2 u\|_{L^2(\Omega)}\|\Delta^3 u\|_{L^2(\Omega)} + C\|\Delta u\|_{L^2(\Omega)}\|\Delta^2 u\|_{L^2(\Omega)},
\]
which proves the desired result by $\|\Delta^j u\|_{L^2(\Omega)} \le C\|\Delta^{j+1} u\|_{L^2(\Omega)}$ 
for $j=1,2$. 
\end{enumerate}
\end{proof}

Complementing the standard error estimates for the Ritz projection in the $L^2$ and $H^1$ norms, under Assumption \ref{assump:regularityH3},
we also have the following negative norm estimate. Note that even though no $H^3$ regularity of the 
solution $u$ is used explicitly in the 
estimates, the duality argument used to prove the result, requires the assumption to hold true for 
any $H^1_0$ right hand side.
\begin{lemma}\label{lemm:H-1estimate}
  Let $u \in H^1_0(\Omega)$ and Assumption \ref{assump:regularityH3} hold true. Then it holds
  \begin{equation*}
    \|u - R_h u \|_{H^{-1}(\Omega)} \le C h^2 \| \nabla (u - R_h u) \|_{L^2(\Omega)}.
  \end{equation*}
  If further $u \in H^2(\Omega)$, then it holds 
  \begin{equation*}
    \|u - R_h u \|_{H^{-1}(\Omega)} \le C h^3 \|u \|_{H^{2}(\Omega)}
    \le C h^3 \|\Delta u \|_{L^{2}(\Omega)}.
  \end{equation*}
\end{lemma}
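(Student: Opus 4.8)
The plan is to run a standard duality (Aubin--Nirenberg) argument, with the essential twist that the regularity Assumption \ref{assump:regularityH3} is invoked for the \emph{dual} problem rather than for $u$ itself. Writing $e = u - R_h u$ and recalling that
\[
  \|e\|_{H^{-1}(\Omega)} = \sup_{0 \ne \varphi \in H^1_0(\Omega)} \frac{(e,\varphi)_\Omega}{\|\nabla \varphi\|_{L^2(\Omega)}},
\]
it suffices to bound $(e,\varphi)_\Omega$ for an arbitrary $\varphi \in H^1_0(\Omega)$ by $C h^2 \|\nabla e\|_{L^2(\Omega)} \|\nabla \varphi\|_{L^2(\Omega)}$.

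First I would introduce the dual solution $w \in H^1_0(\Omega)$ of $-\Delta w = \varphi$. Since $\varphi \in H^1_0(\Omega)$, we have $\Delta w = -\varphi \in H^1_0(\Omega)$, so that Assumption \ref{assump:regularityH3} applies to $w$ and furnishes $w \in H^3(\Omega)$ together with the key bound $\|w\|_{H^3(\Omega)} \le C \|\nabla \Delta w\|_{L^2(\Omega)} = C\|\nabla \varphi\|_{L^2(\Omega)}$. This is precisely the step that forces the additional smoothness hypothesis: a plain $L^2$ datum would only yield $H^2$ regularity of $w$, whereas the $H^1_0$ datum $\varphi$ buys one extra derivative, and it is this gain that upgrades the final convergence rate from $h^2$ to $h^3$. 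I expect this to be the main conceptual point of the proof; the authors flag it in the sentence immediately preceding the lemma.

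Next I would integrate by parts and exploit Galerkin orthogonality. Since $e \in H^1_0(\Omega)$, Green's formula gives $(e,\varphi)_\Omega = (e,-\Delta w)_\Omega = (\nabla e,\nabla w)_\Omega$, and by the defining orthogonality $(\nabla e,\nabla w_h)_\Omega = 0$ of the Ritz projection this equals $(\nabla e, \nabla(w-w_h))_\Omega$ for every $w_h \in V_h^2$. Taking $w_h$ to be the interpolant of $w$ and using the $O(h^2)$ approximation property of the quadratic elements on $H^3$ functions, $\inf_{w_h \in V_h^2}\|\nabla(w-w_h)\|_{L^2(\Omega)} \le C h^2 \|w\|_{H^3(\Omega)}$, a Cauchy--Schwarz estimate yields
\[
  (e,\varphi)_\Omega \le \|\nabla e\|_{L^2(\Omega)}\, C h^2 \|w\|_{H^3(\Omega)} \le C h^2 \|\nabla e\|_{L^2(\Omega)} \|\nabla \varphi\|_{L^2(\Omega)},
\]
and dividing by $\|\nabla\varphi\|_{L^2(\Omega)}$ and taking the supremum gives the first claimed estimate.

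Finally, for the second estimate I would assume in addition $u \in H^2(\Omega)$ and insert the standard $H^1$ Ritz projection bound $\|\nabla(u - R_h u)\|_{L^2(\Omega)} \le C h \|u\|_{H^2(\Omega)}$ into the first inequality, obtaining $\|e\|_{H^{-1}(\Omega)} \le C h^3 \|u\|_{H^2(\Omega)}$; the chain is then closed with the convex-domain elliptic regularity bound $\|u\|_{H^2(\Omega)} \le C \|\Delta u\|_{L^2(\Omega)}$. Apart from the regularity step for $w$, every ingredient here is routine approximation theory and Cauchy--Schwarz, so the only real obstacle is ensuring that Assumption \ref{assump:regularityH3} is legitimately applicable to the generic $H^1_0$ right-hand side $\varphi$ arising in the duality.
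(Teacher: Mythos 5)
Your proposal is correct and follows essentially the same route as the paper: the paper simply cites the duality argument of \cite[Theorem 5.8.3]{BrennerScott} for the first estimate, and your write-up is precisely that Aubin--Nitsche argument, including the key point (which the paper flags before the lemma) that Assumption \ref{assump:regularityH3} is applied to the dual problem with $H^1_0$ datum $\varphi$ to gain the extra order. Your derivation of the second estimate from the first via the standard $H^1$ Ritz bound and convex-domain $H^2$ regularity is also exactly what the paper does.
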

\begin{proof}
  The first estimate is proved by a duality argument in \cite[Theorem 5.8.3]{BrennerScott}.
  The second estimate then follows with the standard $H^1$ error estimate and $H^2$ regularity.
\end{proof}

Under Assumption \ref{assump:regularityH3} we can establish the main results of this section.
We first consider again the case of $L^2$ initial data. The extension to $v_0 \in \M(\Omega)$
then follows analogously to the case of linear finite elements.
\begin{theorem}\label{thm:smoothing_est_space_p2}
Let $v_0\in L^2(\Omega)$, let $v$ and $v_{kh} \in \Xkhrtwo$ satisfy \eqref{eq:eq_aux} and 
\eqref{eq:fully discrete heat}, respectively. Then for any subdomain $\Omega_0$ with $\overline \Omega_0\subset \Omega$ there holds
\begin{equation}\label{eq:h3_regularity}
    \|(v-v_{kh})(T)\|_{L^\infty(\Om_0)} \le C(T,d)\left(\ell_{k}h^3 + k^{2r+1} \right)\ltwonorm{v_0},
\end{equation}
where $\ell_{k} = \lk$, $d=\dist(\Om_0,\partial \Om)$
and $C(T,d)$ is a constant depending on $T$ and $d$.
\end{theorem}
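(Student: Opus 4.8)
The plan is to mirror the proof of Proposition~\ref{prop:fully discrete from Linfty_to_L2}, now with the Ritz projection $R_h$ onto the quadratic space $V_h^2$, using the error splitting
\begin{equation*}
(v-v_{kh})(T) = (v-v_k)(T) + (R_hv_k-v_{kh})(T) + (v_k - R_hv_k)(T).
\end{equation*}
The first term is the purely semidiscrete error, which is insensitive to the spatial order: by \eqref{eq:semidiscrete_error} it is bounded by $C(T)k^{2r+1}\ltwonorm{v_0}$. The whole task is thus to upgrade the two spatial contributions from the piecewise-linear rate $h^2$ (which carries a $\lh$ on the interior term) to $h^3$, with the logarithm reduced to $\ell_k=\lk$.

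For the second, \emph{global} term I would establish, as in \cite[Lem.~8.2--8.3]{LeyVexWal19}, the fully discrete bounds
\begin{equation*}
\|(-\Delta_h)^j(R_hv_k-v_{kh})(T)\|_{L^2(\Om)} \le C_j(T)\,\lk\, h^3 \ltwonorm{v_0}, \qquad j=0,1,
\end{equation*}
after which the discrete Gagliardo--Nirenberg inequality \eqref{eq: discrete Gagliardo-Nirenber} gives the $L^\infty(\Om)$ bound. Writing $\theta=R_hv_k-v_{kh}$ and $\rho=R_hv_k-v_k$, the orthogonality \eqref{eq: orthogonality fully discrete} yields $B(\theta,\varphi_{kh})=B(\rho,\varphi_{kh})$; the gradient term of $B$ vanishes by the defining property of $R_h$, leaving a right-hand side built from $(R_h-I)\pa_t v_k$ and the jump contributions $(R_h-I)v_k$. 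The gain to $h^3$ comes from two ingredients unavailable in the linear case: the negative-norm Ritz estimate $\|(R_h-I)w\|_{H^{-1}(\Om)}\le Ch^3\|\Delta w\|_{L^2(\Om)}$ of Lemma~\ref{lemm:H-1estimate}, together with the higher smoothing of $v_k$ supplied by the $H^3$/$H^5$ estimates of Lemma~\ref{thm:h3regularity} and the discrete smoothing of Lemmas~\ref{lemma: homogeneous smoothing dG_r fully discrete} and~\ref{lemma: higher smoothing}. Exactly as in the $L^2$ analysis, the factor $\lk$ arises from summing the $t_m^{-1}$-type smoothing bounds over the time steps via Corollary~\ref{cor: maximal parabolic initial in L1}. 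The case $j=1$ follows by applying the same argument to $(-\Delta_h)\theta$, which solves the analogous discrete equation with $v_k$ replaced by $\Delta v_k$; this is legitimate since $\Delta v_k,\Delta^2 v_k\in H^1_0(\Om)$ by Remark~\ref{rem:bc_laplacian_semidiscrete}, and Lemma~\ref{thm:h3regularity} furnishes the matching $h^3$ Ritz bounds.

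For the third, \emph{interior} term I would invoke the interior maximum-norm estimate for the elliptic Ritz projection (cf.\ \cite{SchatzWahlbin_1977}). The decisive point is that for quadratic elements, $s=2\ge 2$, the interior $L^\infty$ estimate carries no logarithmic factor, the $\lh$ being a genuinely piecewise-linear phenomenon, so that
\begin{equation*}
\|(v_k - R_hv_k)(T)\|_{L^\infty(\Om_0)} \le C(T,d)\, h^3 \ltwonorm{v_0}.
\end{equation*}
Here the local smoothness of $v_k(T)$ needed to feed the estimate, together with the control of the global pollution term in a negative norm through Lemma~\ref{lemm:H-1estimate}, is provided by the interior $H^5$ smoothing of Lemma~\ref{thm:h3regularity}(2) combined with the higher discrete smoothing bounds, and the $T$- and $d$-dependence of the constant is tracked as in the linear case. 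Summing the three contributions gives the claimed estimate with $\ell_k=\lk$.

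The main obstacle is the second step: forcing the fully discrete error $R_hv_k-v_{kh}$ down to the optimal order $h^3$. Plain $L^2$ Ritz bounds no longer suffice, and one must play the negative-norm estimate of Lemma~\ref{lemm:H-1estimate} against the time-derivative and jump terms of $B$ while keeping the smoothing singularities summable. The delicate bookkeeping is to verify that the boundary conditions demanded by Assumption~\ref{assump:regularityH3} and Remark~\ref{rem:bc_laplacian_semidiscrete} genuinely hold for all the powers $(-\Delta_h)^j\theta$ (equivalently, for $\Delta^j v_k(T)$) that enter the higher-order argument.
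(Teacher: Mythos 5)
Your proposal follows essentially the same route as the paper: the identical three-term splitting, the semidiscrete bound for $(v-v_k)(T)$, the Schatz--Wahlbin interior estimate (log-free for quadratics) combined with the $H^3$/$H^5$ regularity of Lemma \ref{thm:h3regularity} and discrete smoothing for $(v_k-R_hv_k)(T)$, and duality/orthogonality arguments powered by the negative-norm Ritz estimate of Lemma \ref{lemm:H-1estimate}, Corollary \ref{cor: maximal parabolic initial in L1}, and Remark \ref{rem:bc_laplacian_semidiscrete} to get the $h^3\lk$ bounds on $(-\Delta_h)^j(R_hv_k-v_{kh})(T)$, $j=0,1$, followed by discrete Gagliardo--Nirenberg. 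The only cosmetic deviations (taking the pollution term in $H^{-1}$ rather than $L^2$, and commuting $\Delta_h$ through the equation rather than onto the dual solution as in Lemma \ref{lemma:8.3}) do not change the substance of the argument.
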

\subsection{Proof of Theorem \ref{thm:smoothing_est_space_p2}}
The exact dependence of the constant $C$ on $T$ and $d$ is available in the proof of this result.
The rest of this section is devoted to the establishment of the above theorem.
The  proof for the quadratic case is similar to the proof for the piecewise linear case, but requires some modifications. As it was done in \cite{LeyVexWal19}, we split the error as 
\begin{equation}\label{eq:err_split_quadFE}
(v-v_{kh})(T) = (v-v_{k})(T)+(v_k-R_hv_{k})(T)+(R_hv_k-v_{kh})(T):=T_1+T_2+T_3.
\end{equation}
The first time semidiscrete term $T_1$ is already estimated in \cite[Theorem 3.8]{LeyVexWal19}. The second
term $T_2$ can again be estimated by the interior pointwise error estimates of 
\cite[Theorem 5.1]{SchatzWahlbin_1977},
\begin{equation}\label{eq: from SW1977 2}
\|(v_k - R_h v_k)(T)\|_{L^\infty(\Om_0)}\le C \|v_k(T)-\chi\|_{L^\infty(\Om_d)}+Cd^{-N/2}\|(v_k - R_h v_k)(T)\|_{L^2(\Om)},
\end{equation}
for any $\chi\in V^2_h$, where $\Om_d$ is a subdomain satisfying $\overline \Om_0\subset\Om_d\subset \overline \Om_d \subset\Om$ 
and $d=\dist(\Omega_0,\partial \Omega_d)$. We note that in contrast to the linear elements, in the above estimate the logarithmic term is not needed.
By the approximation theory and the Sobolev embedding
  $H^5(\Omega_d) \hookrightarrow W^{3,\infty}(\Omega_d)$  (see e.g.
  \cite[Theorem 4.12]{Adams_Fournier_2005}), 
  Lemma \ref{thm:h3regularity} and the discrete parabolic smoothing result of Lemma
  \ref{lemma: homogeneous smoothing dG_r fully discrete}, we obtain
$$
\begin{aligned}
\|v_k(T)-\chi\|_{L^\infty(\Om_d)}&\le Ch^3 \|v_k(T)\|_{W^{3,\infty}(\Omega_d)} \le  Ch^3 \|v_k(T)\|_{H^5(\Omega_d)}\\
& \le
      Ch^3\|\Delta^2 v_k(T)\|^{ \frac{1}{2}}_{\Ltwo}\|\Delta^3 v_k(T)\|^{ \frac{1}{2}}_{\Ltwo}
      \le \frac{Ch^3}{T^{\frac{5}{2}}}\|v_0\|_{L^2(\Om)}.
\end{aligned}
$$
The pollution term $\|(v_k - R_h v_k)(T)\|_{L^2(\Om)}$ from \eqref{eq: from SW1977 2}, can be estimated using global  elliptic estimates in $L^2$ norm, Lemma \ref{thm:h3regularity} and 
Lemma \ref{lemma: higher smoothing 2} as
\begin{equation}
\|(v_k - R_h v_k)(T)\|_{L^2(\Om)}\le Ch^3\|v_k(T)\|_{H^3(\Om)}
\le Ch^3\|\Delta v_k(T)\|^{ \frac{1}{2}}_{\Ltwo}\|\Delta^2 v_k(T)\|^{ \frac{1}{2}}_{\Ltwo}
\le \ \frac{Ch^3}{T^{\frac{3}{2}}}\|v_0\|_{L^2(\Om)}.
\end{equation}
Thus,
$$
\|(v_k - R_h v_k)(T)\|_{L^\infty(\Om_0)}\le C(T,\Omega_0)h^3\|v_0\|_{L^2(\Om)},
$$
and it remains to estimate the last term $T_3$ of \eqref{eq:err_split_quadFE}.
As done in \cite[Lemmas~8.2--8.3]{LeyVexWal19}, this will be achieved by estimating
\begin{equation}\label{eq:Rhvk_vkh}
\|(-\Delta_h)^j(R_hv_k-v_{kh})(T)\|_{L^2(\Om)},\quad j=0,1,
\end{equation}
and the discrete  Gagliardo-Nirenberg  inequality 
\eqref{eq: discrete Gagliardo-Nirenber}.
The proof of the above estimates was facilitated by the following technical lemma,
see \cite[Lemma 8.1]{LeyVexWal19}.
\begin{lemma}\label{lemma: P_h u_k-u_kh}
 Let $v_0\in L^2(\Omega)$, let $v$ and $v_{kh} \in \Xkhrone$ satisfy \eqref{eq:eq_aux}
  and \eqref{eq:fully discrete heat}, respectively.
There exists a constant $C$ independent of $k$, $h$, and $T$ such that
$$
    \|\Delta_h^{-1}(P_hv_k-v_{kh})(T)\|_{L^2(\Om)}\le Ch^{2} \lk\|v_0\|_{L^2(\Om)}.
$$
\end{lemma}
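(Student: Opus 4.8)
The plan is to prove the bound by a duality argument in the weaker discrete norm, reducing the full difference $e_{kh}:=(P_hv_k-v_{kh})$ to the $L^2$-projection error of the time-semidiscrete solution. First I would derive an error equation for $e_{kh}$. Since $\Xkhrone\subset\hXk$, the semidiscrete equation \eqref{eq:semidiscrete_heat_measuredata} may be tested with fully discrete functions, so the orthogonality $B(v_k-v_{kh},\varphi_{kh})=0$ holds for all $\varphi_{kh}\in\Xkhrone$. Writing $e_{kh}=(P_hv_k-v_k)+(v_k-v_{kh})$ and setting $\rho:=P_hv_k-v_k$, this yields $B(e_{kh},\varphi_{kh})=B(\rho,\varphi_{kh})$. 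The crucial point is that $\rho(t)$ is $L^2$-orthogonal to $V_h^s$ for each $t$; since $P_h$ is time independent, the same holds for $\partial_t\rho$, the jumps $[\rho]_m$, and $\rho_0^+$. Hence in \eqref{eq: bilinear form B} every term except the gradient term vanishes, and using $(\na\rho,\na\varphi_{kh})=(\na R_h\rho,\na\varphi_{kh})=(-\Delta_h R_h\rho,\varphi_{kh})$ together with $R_h\rho=P_hv_k-R_hv_k$,
\[
B(e_{kh},\varphi_{kh}) = (\na\rho,\na\varphi_{kh})_{\IOm} = \int_I (P_hv_k-R_hv_k,-\Delta_h\varphi_{kh})_\Om\,dt.
\]

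Next I would set up the duality. For $\psi\in V_h^s$ with $\ltwonorm{\psi}=1$ I put $\Phi:=\Delta_h^{-1}\psi\in V_h^s$ and let $z_{kh}\in\Xkhrone$ solve the backward problem $B(\varphi_{kh},z_{kh})=(\varphi_{kh}(T),\Phi)_\Om$. Choosing $\varphi_{kh}=e_{kh}$ and inserting the error equation gives
\[
(\Delta_h^{-1}e_{kh}(T),\psi)_\Om = (e_{kh}(T),\Phi)_\Om = B(e_{kh},z_{kh}) = \int_I (P_hv_k-R_hv_k,-\Delta_h z_{kh})_\Om\,dt.
\]
Taking the supremum over such $\psi$ produces $\ltwonorm{\Delta_h^{-1}e_{kh}(T)}$, so it remains to bound the right-hand side by $Ch^2\lk\ltwonorm{v_0}$.

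The estimate of the right-hand side rests on two ingredients. For the first factor I would use the standard $L^2$ bounds for the $L^2$- and Ritz projections together with $H^2$ regularity on the convex domain, giving $\ltwonorm{(P_hv_k-R_hv_k)(t)}\le Ch^2\norm{v_k(t)}_{H^2(\Om)}\le Ch^2\ltwonorm{\Delta v_k(t)}$, the source of the two powers of $h$. For the second factor the decisive observation is that the dual datum $\Phi=\Delta_h^{-1}\psi$ is smooth: since $\Delta_h$ commutes with the fully discrete solution operator, $\Delta_h z_{kh}$ is the discrete solution with datum $\Delta_h\Phi=\psi$, and $L^2$-stability of the scheme yields $\sup_t\ltwonorm{\Delta_h z_{kh}(t)}\le C\ltwonorm{\psi}=C$, uniformly in $k,h,T$. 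Therefore the dual contributes no temporal singularity and
\[
\int_I (P_hv_k-R_hv_k,-\Delta_h z_{kh})\,dt \le Ch^2\Bigl(\sup_t\ltwonorm{\Delta_h z_{kh}(t)}\Bigr)\sum_{m=1}^M\|\Delta v_k\|_{L^1(I_m;L^2(\Om))} \le Ch^2\lk\ltwonorm{v_0},
\]
where the last step is the maximal-parabolic-regularity bound of Corollary \ref{cor: maximal parabolic initial in L1}, which produces exactly the logarithmic factor and a constant independent of $T$.

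The main obstacle, and the point that must be handled with care, is the balance of the forward and backward singularities. A naive duality would pair the forward solution $v_k$, singular at $t=0$, against a backward dual singular at $t=T$, producing a divergent weight $\int_0^T \tfrac{dt}{t(T-t)}$. The plan circumvents this precisely by testing against $\Delta_h^{-1}$: this makes the dual datum two orders smoother, so the dual stays bounded in the $\Delta_h$-norm uniformly in time and the entire temporal weight is carried by the single factor $\sum_m\|\Delta v_k\|_{L^1(I_m;L^2(\Om))}$, where the discretization regularizes the $t^{-1}$ singularity into $\lk$. One also has to justify that $v_k(t)\in H^2(\Om)$ with the smoothing bound on every interval, including $I_1$, so that the projection-error estimate and Corollary \ref{cor: maximal parabolic initial in L1} indeed apply on all of $I$.
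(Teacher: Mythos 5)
Your proof is correct and follows essentially the same duality argument the paper uses for this estimate (see the quadratic analogue, Lemma \ref{lemma:8.1}): a discrete backward dual problem with $\Delta_h^{-1}$-preconditioned final data, Galerkin orthogonality to replace $v_{kh}$ by $v_k$, the vanishing of all $L^2$-pairing terms of $B$ against $P_h v_k - v_k$, the swap to the Ritz projection in the gradient term yielding the $O(h^2)$ factor, and Corollary \ref{cor: maximal parabolic initial in L1} producing $\lk$ with a $T$-independent constant. The only cosmetic difference is in handling the dual factor: you commute $\Delta_h$ through the dual problem and invoke plain $L^2$ stability of $\Delta_h z_{kh}$, whereas the paper keeps the negative power on the dual (e.g. $\|\Delta_h^{-1}z_{kh}(t)\|_{H^1_0(\Omega)} \le C\|z_{kh}(t)\|_{L^2(\Omega)}$) and uses $L^\infty(I;L^2)$ stability of $z_{kh}$ itself --- the two bookkeepings are equivalent.
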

In order to prove Theorem \ref{thm:smoothing_est_space_p2} we thus first extend Lemma 
\ref{lemma: P_h u_k-u_kh} to quadratic finite elements in space, in order to estimate
the terms of \eqref{eq:Rhvk_vkh}.

\begin{lemma}\label{lemma:8.1}
  Let $v_k \in \Xkr$ and $v_{kh} \in \Xkhrtwo$ be the semidiscrete
  and fully discrete solutions of \eqref{eq:semidiscrete_heat_measuredata} and 
  \eqref{eq:fully discrete heat}, respectively for $v_0\in L^2(\Om)$. Then there exists a constant $C$ independent
  of $h$,$k$ and $T$ such that
  \begin{equation}
    \|\Delta_h^{-2} \left( P_h v_k - v_{kh} \right)(T)\|_{L^2(\Omega)} \leq
    C h^3 \lk\|v_0\|_{L^2(\Omega)}.
  \end{equation}
\end{lemma}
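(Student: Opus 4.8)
The plan is to mimic the duality/energy argument used to establish Lemma \ref{lemma: P_h u_k-u_kh}, but applied to the quantity $\Delta_h^{-2}(P_h v_k - v_{kh})$ rather than $\Delta_h^{-1}(P_h v_k - v_{kh})$, so as to pick up one additional power of $h$ (from $h^2$ to $h^3$). The starting point is the Galerkin orthogonality \eqref{eq: orthogonality fully discrete}, which, together with the definitions \eqref{eq:discrete_laplacian} of $\Delta_h$ and of $P_h$, gives an equation governing the error $e_{kh} := P_h v_k - v_{kh}$. The central object is the "negative discrete Laplacian" $\Delta_h^{-2} e_{kh}$; I would introduce a dual fully discrete solution $z_{kh} \in \Xkhrtwo$ solving a backward-in-time discrete problem whose final-time data is $\Delta_h^{-2} e_{kh}(T)$, so that $\|\Delta_h^{-2} e_{kh}(T)\|_{L^2(\Omega)}^2 = B(e_{kh}, z_{kh})$ can be rewritten using orthogonality.

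The key steps, in order, are as follows. First, I would write $\|\Delta_h^{-2} e_{kh}(T)\|_{L^2}^2 = (\Delta_h^{-2} e_{kh}(T), \Delta_h^{-2} e_{kh}(T))$ and express this via the bilinear form $B$ applied to $e_{kh}$ against a suitable dual variable, exploiting the identity \eqref{eq: Bilinear tilde} to handle the truncation of the time interval if needed. Second, using \eqref{eq: orthogonality fully discrete}, I would subtract an arbitrary $\Xkhrtwo$ interpolant of the dual solution, reducing the estimate to a product of a projection/Ritz error of the dual state against the residual of $P_h v_k - v_{kh}$. Third, I would invoke the higher-order regularity afforded by Assumption \ref{assump:regularityH3} through Lemma \ref{thm:h3regularity} and the negative-norm estimate of Lemma \ref{lemm:H-1estimate}, which is precisely what upgrades the spatial rate from $h^2$ to $h^3$: the $H^{-1}$ Ritz error estimate $\|u - R_h u\|_{H^{-1}} \le C h^3 \|\Delta u\|_{L^2}$ supplies the extra half-power-squared. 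Fourth, I would control the discrete parabolic smoothing factors of the dual problem using the discrete smoothing estimates of Lemmas \ref{lemma: homogeneous smoothing dG_r fully discrete} and \ref{lemma: higher smoothing}, applied to the backward equation, so that every negative power of $t_m$ (or of $T-t$) is integrated against the time steps, producing the logarithmic factor $\ell_k = \lk$.

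The main obstacle I anticipate is the careful bookkeeping of the interplay between the operator $\Delta_h^{-2}$, the $L^2$-projection $P_h$, and the Ritz projection $R_h$ at the fully discrete level. Specifically, the quantity being estimated mixes $P_h v_k - v_{kh}$, whereas the improved spatial rate is natural for the Ritz error $v_k - R_h v_k$; reconciling these requires commuting $\Delta_h$ with the projections and repeatedly using $(-\Delta_h v_h, w_h) = (\nabla v_h, \nabla w_h)$ together with the definitions of $P_h$ and $R_h$. Getting the constant to be genuinely independent of $k$, $h$, and $T$ (as claimed) while summing the smoothing estimates over all time intervals — and ensuring the resulting time-integral of $t_m^{-1}$-type factors collapses to exactly one logarithm $\lk$ rather than a power of $T$ — will be the delicate point, and I would handle it by the same splitting into an initial block of intervals near $t=0$ (treated by the crude smoothing bound) and the remaining intervals (treated by the higher-order smoothing of Lemma \ref{lemma: higher smoothing}) that underlies the analogous argument in \cite{LeyVexWal19}.
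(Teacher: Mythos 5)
Your proposal takes essentially the same route as the paper's proof: a fully discrete duality argument with final-time data $\Delta_h^{-2}(P_h v_k - v_{kh})(T)$, Galerkin orthogonality to exchange $v_{kh}$ for $v_k$, the negative-norm Ritz estimate of Lemma~\ref{lemm:H-1estimate} to produce the factor $h^3$, and the $L^1(I;L^2(\Omega))$ smoothing bound of Corollary~\ref{cor: maximal parabolic initial in L1} to produce the logarithm. Two details to correct when writing it out: the dual problem must be tested with $\Delta_h^{-2}(P_h v_k - v_{kh})$ itself (testing with $P_h v_k - v_{kh}$ yields only $\|\Delta_h^{-1}(P_h v_k - v_{kh})(T)\|_{L^2(\Omega)}^2$), and the Ritz/$L^2$-projection error carrying the $H^{-1}$ estimate is that of the \emph{primal} semidiscrete solution $v_k$ rather than of the dual state (which is already fully discrete, so no interpolant of it can be subtracted) --- consequently the logarithm necessarily comes from the primal side via $\|\Delta v_k\|_{L^1(I;L^2(\Omega))}\le C\ln\frac{T}{k}\,\|v_0\|_{L^2(\Omega)}$, with only $L^\infty(I;L^2(\Omega))$ stability needed for the dual solution.
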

\begin{proof}
  Let $z_{kh} \in \Xkhrtwo$ be the solution to a dual problem with
  $z_{kh}(T)=\Delta_h^{-2}(P_h v_k - v_{kh})(T)$, i.e.
    \begin{equation*}
      B(\chi_{kh},z_{kh}) = \left( \chi_{kh}(T),\Delta_h^{-2}(P_h v_k - v_{kh})(T)
      \right)
      \quad \text{for all} \quad \chi_{kh} \in \Xkhrtwo.
    \end{equation*}
    Choosing $\chi_{kh} = \Delta_h^{-2}(P_h v_k - v_{kh})$,
    \textcolor{red}{and using the Galerkin orthogonality
    \eqref{eq: orthogonality fully discrete} of $v_k$ and $v_{kh}$, we obtain}
    \begin{equation*}
      \mathcal{Z} := \|\Delta_h^{-2}(P_h v_k - v_{kh})(T)\|^2_{L^2(\Omega)}
      = B(\Delta_h^{-2}(P_h v_k - v_{kh}),z_{kh})
      = B(P_h v_k - v_{kh},\Delta_h^{-2} z_{kh})
      \textcolor{red}{= B(P_h v_k - v_{k},\Delta_h^{-2} z_{kh}).}
    \end{equation*}
    \textcolor{red}{
      Note, that since $z_{kh}$ is piecewise polynomial in time, with values in $V_h^2$,
      for every $t \in I$, it holds $\Delta_h^{-2}z_{kh}(t) \in V_h^2$ and for every
      $t \in I \setminus \{t_0,t_1, ..., t_M\}$ it holds $\partial_t \Delta_h^{-2} z_{kh}(t) \in V_h^2$.
      Using the dual representation of $B$, given in \eqref{eq:B_dual}, and 
      the definition $P_h$, all $L^2(\Om)$ inner products vanish, and it holds
    }
    \begin{equation*}
      \mathcal{Z} = (\nabla(P_h v_k - v_{k}),\nabla(\Delta_h^{-2} z_{kh}))_{I\times \Omega}.
    \end{equation*}
    In this inner product, we can replace $v_k$ by its Ritz projection $R_h v_k$ and obtain after
    applying the definitions of $\Delta_h$, $P_h$ and the duality pairing
    \begin{equation*}
    \begin{aligned}
     \mathcal{Z} =(\nabla(P_h v_k - R_h v_{k}),
     \nabla(\Delta_h^{-2} z_{kh}))_{I\times \Omega}
    & = -(P_h v_k - R_h v_{k},\Delta_h^{-1} z_{kh})_{I\times \Omega}
     = -(v_k - R_h v_{k},\Delta_h^{-1} z_{kh})_{I\times \Omega}\\
    &\le \int_I \|(v_k - R_h v_k)(t)\|_{H^{-1}(\Omega)} \|\Delta_h^{-1}z_{kh}(t)\|_{H^1_0(\Omega)} \ dt.
      \end{aligned}
    \end{equation*}
    The second term in the integral for each fixed $t$, can be estimated as follows,
    \begin{align*}
      \|\Delta_h^{-1}z_{kh}(t)\|_{H^1_0(\Omega)}^2
      &\leq C \left( \nabla \Delta_h^{-1}z_{kh}(t),\nabla \Delta_h^{-1}z_{kh}(t) \right)_{\Om}\\
      & =  - C \left( z_{kh}(t),\Delta_h^{-1}z_{kh}(t) \right)_{\Om}\\
      & \leq  C \|z_{kh}(t)\|_{L^2(\Omega)}\|\Delta_h^{-1}z_{kh}(t)\|_{L^2(\Omega)}\\
      & \leq  C \|z_{kh}(t)\|_{L^2(\Omega)}\|\Delta_h^{-1}z_{kh}(t)\|_{H^1_0(\Omega)},
    \end{align*}
    yielding $\|\Delta_h^{-1}z_{kh}(t)\|_{H^1_0(\Omega)}
    \leq C \|z_{kh}(t)\|_{L^2(\Omega)}$ for almost all $t$.
    Using this estimate together with Lemma \ref{lemm:H-1estimate}, we get
    \begin{equation*}
     \mathcal{Z} \leq C h^3\int_I \|\Delta v_k(t)\|_{L^2(\Omega)}
     \|z_{kh}(t)\|_{L^2(\Omega)} \ dt\leq C h^3 \|\Delta v_k\|_{L^1(I;L^2(\Omega))}
     \|z_{kh}\|_{L^\infty(I;L^2(\Omega))}.
    \end{equation*}
    Using Corollary \ref{cor: maximal parabolic initial in L1}, we finally obtain
        \begin{align*}
     \mathcal{Z} &\leq C h^3 \lk \|v_0\|_{L^2(\Omega)}
            \|z_{kh}(T)\|_{L^2(\Omega)}
      \leq C h^3 \lk \|v_0\|_{L^2(\Omega)}
      \|\Delta_h^{-2}(P_h v_k - v_{kh})(T)\|_{L^2(\Omega)}.
    \end{align*}
Canceling $\|\Delta_h^{-2}(P_h v_k - v_{kh})(T)\|_{L^2(\Omega)}$ gives the result.
\end{proof}

Using this auxiliary result, we can prove the next lemmas estimating $R_h v_k - v_{kh}$.
\begin{lemma}\label{lemma:8.2}
  Let $v_k \in \Xkr$ and $v_{kh} \in \Xkhrtwo$ be the semidiscrete and fully discrete
  solutions of \eqref{eq:semidiscrete_heat_measuredata} and \eqref{eq:fully discrete heat}, respectively for $v_0\in L^2(\Om)$.
  There exists a constant $C$ independent of $k$,$h$, and $T$ such that
  \begin{equation*}
    \| \left( R_h v_k - v_{kh} \right)(T)\|_{\Ltwo} \leq
    C h^3 \left( \frac{1}{T^2} + \frac{1}{T^{\frac{3}{2}}} \right)
    \lk \|v_0\|_\Ltwo.
  \end{equation*}
\end{lemma}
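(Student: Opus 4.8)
The plan is to bound $\|(R_h v_k - v_{kh})(T)\|_{\Ltwo}$ by combining the just-proved higher-order estimate of Lemma \ref{lemma:8.1} with the smoothing properties of the fully discrete solution. First I would split off the Ritz projection by writing
\begin{equation*}
  R_h v_k - v_{kh} = (R_h v_k - P_h v_k) + (P_h v_k - v_{kh}),
\end{equation*}
so that the task reduces to estimating the two pieces separately. The term $R_h v_k - P_h v_k$ is purely a spatial projection difference and does not see the time discretization; I expect it to be controlled by the standard $L^2$-error estimates for the Ritz and $L^2$ projections applied to $v_k(T)$, together with the elliptic regularity from Lemma \ref{thm:h3regularity} and the parabolic smoothing of Lemma \ref{lemma: higher smoothing}, giving a bound of order $h^3 T^{-3/2}\|v_0\|_{\Ltwo}$ (the same calculation already used above to estimate $\|(v_k - R_h v_k)(T)\|_{L^2(\Om)}$).

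For the genuinely fully discrete term $(P_h v_k - v_{kh})(T)$, the idea is an inverse-inequality/smoothing bootstrap from the negative-norm control of Lemma \ref{lemma:8.1}. I would use the discrete smoothing estimate of Lemma \ref{lemma: higher smoothing} for the fully discrete equation satisfied by $P_h v_k - v_{kh}$ to interpolate between the very weak norm $\|\Delta_h^{-2}(P_h v_k - v_{kh})(T)\|_{\Ltwo}$, which is $O(h^3 \lk)$ by Lemma \ref{lemma:8.1}, and a higher discrete-Laplacian norm. Concretely, writing $w_{kh} = P_h v_k - v_{kh}$, one has
\begin{equation*}
  \|w_{kh}(T)\|_{\Ltwo} \le \|\Delta_h^{-2} w_{kh}(T)\|_{\Ltwo}^{1/2}
  \, \|\Delta_h^{2} w_{kh}(T)\|_{\Ltwo}^{1/2},
\end{equation*}
after which the factor $\|\Delta_h^{2} w_{kh}(T)\|_{\Ltwo}$ is bounded by applying the fully discrete smoothing Lemma \ref{lemma: higher smoothing} iteratively. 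Since $w_{kh}$ solves a homogeneous discrete problem with data $P_h v_0$ (up to the projection consistency), four applications of the smoothing estimate produce the factor $T^{-2}$, and the geometric mean with the $h^3$ from the negative norm yields $h^{3/2}\cdot h^{3/2}=h^3$ and the temporal weight $T^{-2}$.

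Combining the two pieces gives the claimed bound with the sum $T^{-2}+T^{-3/2}$, the first exponent coming from the fully discrete smoothing chain and the second from the spatial projection term. The main obstacle I anticipate is the bookkeeping of the time weights in the smoothing chain: Lemma \ref{lemma: higher smoothing} requires $k \le \tfrac{t_m}{l+1}$ for the $l$-th power of the discrete Laplacian, so one must verify that the four smoothing steps needed to reach $\Delta_h^2$ are admissible under the standing mesh assumption $k \le \tfrac{T}{2r+2}$, and track that the resulting constant behaves like $T^{-2}$. A secondary subtlety is making sure the interpolation inequality between $\Delta_h^{-2}$ and $\Delta_h^{2}$ norms is legitimate — this is just Cauchy–Schwarz in the eigenbasis of the symmetric positive-definite operator $-\Delta_h$, but it is the step that ties the negative-norm gain of Lemma \ref{lemma:8.1} to the final positive-norm estimate.
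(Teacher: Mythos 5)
Your decomposition $R_h v_k - v_{kh} = (R_h v_k - P_h v_k) + (P_h v_k - v_{kh})$ handles the first piece correctly (it is indeed $O(h^3 T^{-3/2})$ by the Ritz/$L^2$ projection error, Lemma \ref{thm:h3regularity} and Lemma \ref{lemma: higher smoothing}), but the treatment of $w_{kh} := P_h v_k - v_{kh}$ has two genuine flaws. First, the arithmetic of the interpolation step does not give $h^3$: Lemma \ref{lemma:8.1} supplies $\|\Delta_h^{-2} w_{kh}(T)\|_{\Ltwo} \le C h^3 \lk \|v_0\|_{\Ltwo}$, but your second factor $\|\Delta_h^2 w_{kh}(T)\|_{\Ltwo}$ carries no power of $h$ at all, so the geometric mean yields only $C h^{3/2} (\lk)^{1/2} T^{-1}$ --- the claim ``$h^{3/2}\cdot h^{3/2}=h^3$'' would require \emph{both} factors to be $O(h^{3/2})$, which is not the case. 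Second, the bound $\|\Delta_h^2 w_{kh}(T)\|_{\Ltwo} \le C T^{-2}\|v_0\|_{\Ltwo}$ is itself unjustified: $w_{kh}$ is \emph{not} a solution of the homogeneous fully discrete scheme. By the orthogonality \eqref{eq: orthogonality fully discrete} and the definition of $P_h$ it satisfies the inhomogeneous error equation
\begin{equation*}
  B(w_{kh},\varphi_{kh}) = \left(\nabla(P_h v_k - v_k),\nabla \varphi_{kh}\right)_{I\times\Omega}
  \quad \text{for all } \varphi_{kh}\in \Xkhrtwo,
\end{equation*}
so Lemma \ref{lemma: higher smoothing} does not apply to it. A triangle inequality does not repair this: $\|\Delta_h^2 v_{kh}(T)\|_{\Ltwo}$ is fine by discrete smoothing, but $\|\Delta_h^2 P_h v_k(T)\|_{\Ltwo}$ is not uniformly bounded in $h$, because $P_h$ does not commute with $\Delta_h$; the natural estimates, e.g. $\|\Delta_h^2(P_h-R_h)v\|_{\Ltwo} \le C h^{-4}\|(P_h-R_h)v\|_{\Ltwo} \le C h^{-1}\|v\|_{H^3(\Omega)}$, leave an $O(h^{-1})$ defect that no amount of regularity of $v_k$ removes (for quadratic elements $h^3$ is the best $L^2$ projection rate).

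The paper's proof avoids exactly this obstruction by a duality argument rather than an interpolation between norms of $w_{kh}$. One introduces the fully discrete \emph{dual} solution $y_{kh}$ with final datum $(R_h v_k - v_{kh})(T)$, truncates at $t_{\tilde m}$ with $\frac{T}{2}\in I_{\tilde m}$, and uses Galerkin orthogonality; the critical contribution (your $w_{kh}$, evaluated at $t_{\tilde m}$) then appears in the paired form $\left(\Delta_h^{-2}(P_h v_{k,\tilde m} - v_{kh,\tilde m})^-,\, \Delta_h^{2} y_{kh,\tilde m}^+\right)$. There Lemma \ref{lemma:8.1} is invoked with artificial endtime $t_{\tilde m}$, retaining the full factor $h^3\lk$, while the operator $\Delta_h^2$ lands on $y_{kh}$, which \emph{is} a genuine fully discrete solution of a homogeneous backward problem, so that Lemma \ref{lemma: higher smoothing} legitimately produces the factor $(T-t_{\tilde m})^{-2}\sim T^{-2}$. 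In short: the duality pairing lets the negative-norm gain and the smoothing act on two different functions, each in the situation where it is valid, whereas your version asks both to act on $w_{kh}$, which loses half the rate and relies on a smoothing estimate that fails for that function.
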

\begin{proof}
  Let $y_{kh} \in \Xkhrtwo$ be the solution to a dual problem with
  $y_{kh}(T) = \left( R_h v_k - v_{kh} \right)(T)$, i.e $y_{kh} \in \Xkhrtwo$
  satisfies
  \begin{equation*}
    B(\varphi_{kh},y_{kh}) = \left( \varphi_{kh}(T), (R_h v_k - v_{kh})(T) \right)
    \quad \text{for all} \quad \varphi_{kh} \in \Xkhrtwo.
  \end{equation*}
  To simplify notation, we define $\psi_{kh} := R_h v_k - v_{kh} \in \Xkhrtwo$.
  We introduce $\tilde{\psi}_{kh} \in \Xkhrtwo$ to be zero on
  $I_1 \cup ... \cup I_{\tilde{m}}$ and $\tilde{\psi}_{kh} = \psi_{kh}$ on
  $I_{\tilde{m}+1} \cup ... I_M$ for $\tilde{m}$ chosen such that
  $ \frac{T}{2} \in I_{\tilde{m}}$. Analogously we define $\tilde{y}_{kh}$.
  Choosing $\tilde{\psi}_{kh}$ as test function in the definition of
  $y_{kh}$ and transfering the
  cutoff from one argument to the other, by (\ref{eq: Bilinear tilde}), we get
  \begin{align*}
    \| \left( R_h v_k - v_{kh} \right)(T)\|_{\Ltwo}^2 &= B(\tilde{\psi}_{kh},y_{kh})\\
      &=B(\psi_{kh},\tilde{y}_{kh}) + 
      \left( \psi_{kh,\tilde{m}}^-,y_{kh,\tilde{m}}^+\right)\\
      &=B(R_h v_k - v_{kh},\tilde{y}_{kh}) + 
        \left( (R_h v_{k,\tilde{m}} - v_{kh,\tilde{m}})^-,
        y_{kh,\tilde{m}}^+\right)\\
      &=B(R_h v_k - v_{k},\tilde{y}_{kh}) + 
        \left( (R_h v_{k,\tilde{m}} - v_{kh,\tilde{m}})^-,
        y_{kh,\tilde{m}}^+\right)\\
      &= J_1 + J_2.
  \end{align*}
  Here we also have used the Galerkin orthogonality \eqref{eq: orthogonality fully discrete} with respect
  to the bilinear form $B$. By the definition of the Ritz projection the terms 
  $\left(\nabla (R_h v_k - v_{k}), \nabla \tilde{y}_{kh}\right)_{I_m \times \Omega}$
  vanish from the form $B$, such that the remaining terms in $J_1$ are
  \begin{align*}
    J_1 &= -\smashoperator{\sum_{m=\tilde{m}+1}^M} (R_h v_k - v_k,
    \partial_t y_{kh})_{I_m\times\Omega}
    - \smashoperator{\sum_{m=\tilde{m}+1}^M} (R_h v_{k,m}^- - v_{k,m}^-,[y_{kh}]_m)
    - \left( R_h v_{k,\tilde{m}}^- - v_{k,\tilde{m}}^-,y_{kh,\tilde{m}}^+ \right)\\
    &\leq \|R_h v_k - v_k\|_{L^\infty((t_{\tilde{m}-1},T);\Ltwo)}
    \left( \|\partial_t y_{kh}\|_{L^1(I;\Ltwo)} +
      \smashoperator{\sum_{m=1}^M}\|[y_{kh}]_m\|_{\Ltwo} + \|y_{kh,\tilde{m}}^+\|_{\Ltwo}
    \right),
  \end{align*}
  where we used the dual form of $B(\cdot,\cdot)$ and H\"olders inequality in space and time to 
  estimate the terms.
  Applying Corollary \ref{cor: maximal parabolic initial in L1},  gives
  \begin{equation*}
    \|\partial_t y_{kh}\|_{L^1(I;\Ltwo)} + 
    \sum_{m=1}^M\|[y_{kh}]_m\|_{\Ltwo} +
    \|y_{kh,\tilde{m}}^+\|_{\Ltwo}
    \leq C \lk \|(R_h v_k - v_{kh})(T)\|_{\Ltwo}.
  \end{equation*}
  Note that $y_{kh}$ is a solution to a dual problem and we use $y_{kh}(T)$ as bound. Using the $L^2$ error
  estimate for the Ritz projection, together with the estimate \eqref{eq:est:H3} of 
  Lemma \ref{thm:h3regularity}, we obtain for any $t \in (t_{\tilde{m}-1},T]$
  \begin{equation*}
   \ \|(R_h v_k - v_k)(t)\|_{\Ltwo} \leq C h^3
      \| v_k(t)\|_{H^3(\Omega)}\le Ch^3 
      \|\Delta v_k(t)\|_{\Ltwo}^{ \frac{1}{2}}
    \|\Delta^2 v_k(t)\|_{\Ltwo}^{ \frac{1}{2}}.
  \end{equation*}
  Using the smoothing results of Lemma \ref{lemma: higher smoothing}, we obtain
  \begin{equation}\label{eq:ritzerror_over_time}
  \begin{aligned}
   \sup_{t\in(t_{\tilde{m}-1},T]} \|(R_h v_k - v_k)(t)\|_{\Ltwo)} &\leq
   C h^3 \sup_{t\in(t_{\tilde{m}-1},T]} \|\Delta v_k(t)\|_{\Ltwo}^{ \frac{1}{2}}
    \|\Delta^2 v_k(t)\|_{\Ltwo}^{ \frac{1}{2}}\\
       &\leq C \dfrac{h^3}{t_{\tilde{m}}^{\frac{3}{2}}} \|v_0\|_{\Ltwo}
      \leq C \dfrac{h^3}{T^{\frac{3}{2}}} \|v_0\|_{\Ltwo}.
  \end{aligned}
  \end{equation}
  In the last step, we used the estimate $ \frac{1}{t_{\tilde{m}}} \leq \frac{2}{T}$
  which holds true, since $t_{\tilde{m}}$ was chosen such that
  $ \frac{T}{2} \in I_{\tilde{m}}$, and thus, $ \frac{T}{2}\leq t_{\tilde{m}}$.
  Combining these results gives the proposed estimate for $J_1$:
  \begin{equation*}
    J_1 \leq C \dfrac{h^3}{T^{ \frac{3}{2}}} 
    \lk\|v_0\|_{\Ltwo}
    \|(R_h v_k - v_{kh})(T)\|_{\Ltwo}.
  \end{equation*}
  To estimate $J_2$ we insert an artificial zero by adding and subtracting
  $v_{k,\tilde{m}}^-$,
  \begin{align*}
    J_2 &= ((R_h v_{k,\tilde{m}} - v_{kh,\tilde{m}})^-,y_{kh,\tilde{m}}^+)=((R_h v_{k,\tilde{m}} - v_{k,\tilde{m}})^-,y_{kh,\tilde{m}}^+) +
    ((v_{k,\tilde{m}} - v_{kh,\tilde{m}})^-,y_{kh,\tilde{m}}^+)
    := J_{21} + J_{22}.
  \end{align*}
  The term $J_{21}$ can be estimated with \eqref{eq:ritzerror_over_time},
  the discrete smoothing result of Lemma \ref{lemma: higher smoothing} applied to
  $\|y_{kh,\tilde{m}}^+\|_\Ltwo$ and the special choice of $\tilde{m}$:
  \begin{align*}
    J_{21} &\leq \sup_{t\in(t_{\tilde{m}-1},T)} \|(R_h v_k - v_k)(t)\|_{\Ltwo)}
    \|y_{kh,\tilde{m}}^+\|_{\Ltwo}\\
    &\leq C \dfrac{h^3}{t_{\tilde{m}}^{ \frac{3}{2}}} \|v_0\|_{\Ltwo}
    \|y_{kh}(T)\|_{\Ltwo}\leq C \dfrac{h^3}{T^{ \frac{3}{2}}} \|v_0\|_{\Ltwo}
    \|(R_h v_k - v_{kh})(T)\|_{\Ltwo}.
  \end{align*}
  To estimate $J_{22}$ we use Lemma \ref{lemma:8.1} by using
  $t_{\tilde{m}}$ as artificial endtime. Here it is of importance, that the
  derived constant does not depend on the endtime, since we need to replace 
  $t_{\tilde{m}}$ by $T$ later. This can only be done, when the explicit dependence of
  the result of Lemma \ref{lemma:8.1} on the endtime is known. We thus get after inserting
  the $L^2$ projection operator:
  \begin{align*}
    J_{22} &= ((P_hv_{k,\tilde{m}} - v_{kh,\tilde{m}})^-,y_{kh,\tilde{m}}^+)\\
    &= (\Delta_h^{-2}(P_h v_{k,\tilde{m}} - v_{kh,\tilde{m}})^-,
        \Delta_h^2 y_{kh,\tilde{m}}^+)\\
    &\leq \|\Delta_h^{-2}(P_h v_{k,\tilde{m}} - v_{kh,\tilde{m}})^-\|_{\Ltwo}
        \|\Delta_h^2 y_{kh,\tilde{m}}^+\|_{\Ltwo}\\
    &\leq C \ln \left( \frac{t_{\tilde{m}}}{k}\right)\|v_0\|_{\Ltwo}
    \frac{h^3}{{(T-t_{\tilde{m}})}^2}\|(R_h v_k - v_{kh})(T)\|_{\Ltwo}.
  \end{align*}
  In the last step, we have used Lemma \ref{lemma:8.1} for 
  $\|\Delta_h^{-2}(P_h v_{k,\tilde{m}} - v_{kh,\tilde{m}})^-\|_{\Ltwo}$ and the 
  discrete smoothing result of Lemma \ref{lemma: higher smoothing} for 
  $\|\Delta_h^2 y_{kh,\tilde{m}}^+\|_{\Ltwo}$. Since $y_{kh}$ is a solution to a 
  backwards problem, we use $\frac{1}{T- t_{\tilde{m}}}$, instead of 
  $\frac{1}{t_{\tilde{m}}}$. We now replace all occurrences of $t_{\tilde{m}}$ by $T$.
  As before, we use $\frac{T}{2} \in I_{\tilde{m}}$ yielding
  $ t_{\tilde{m}}\leq \frac{T}{2} + k$. For fine enough time discretizations
  (i.e. $ \frac{T}{4}>k$) we have
  \begin{equation*}
    T-t_{\tilde{m}} \ge T - \frac{T}{2} - k \ge T - \frac{T}{2} - \frac{T}{4} 
    = \frac{T}{4},
  \end{equation*}
  thus giving $\frac{1}{T-t_{\tilde{m}}} \le C \frac{1}{T}$.
  To estimate the logarithmic term, we use the following consideration:
  Let $x \in \mathbb{R}$ such that $x > 2$. Then it holds $x+1 \le x^2$. With the 
  monotonicity of the logarithm, we obtain $\ln(x+1) \le \ln(x^2) = 2 \ln(x)$.
  Applying this to the logarithmic term, wile using $ t_{\tilde{m}}\leq \frac{T}{2} + k$
  and $\frac{T}{2k}>2$, yields
  \begin{equation*}
    \ln \left( \frac{t_{\tilde{m}}}{k}\right) \leq 
    \ln \left( \frac{ \frac{T}{2}+k}{k}\right) =
    \ln \left( \frac{T}{2k}+1\right) \leq 2\ln \left( \frac{T}{2k} \right)
    \leq 2\ln \left( \frac{T}{k} \right).
  \end{equation*}
  This gives a bound for $J_{22}$, depending on the final time $T$,
  \begin{equation*}
    J_{22} \leq C \frac{h^3}{T^2}\lk
    \|v_0\|_{\Ltwo} \|(R_h v_k - v_{kh})(T)\|_{\Ltwo}.
  \end{equation*}
  Dividing all considered terms by $\|(R_h v_k - v_{kh})(T)\|_{\Ltwo}$ gives the
  proposed estimate.
\end{proof}
We now show a similar result for the discrete Laplacian.
\begin{lemma}\label{lemma:8.3}
  Let $v_k \in \Xkr$ and $v_{kh} \in \Xkhrtwo$ be the semidiscrete and fully
  discrete solutions of \eqref{eq:semidiscrete_heat_measuredata} and \eqref{eq:fully discrete heat},
  respectively. Then there exists a constant independent of $k$, $h$, and $T$ such that
  \begin{equation*}
    \|\Delta_h(R_h v_k - v_{kh})(T)\|_{\Ltwo} \leq C h^3
    \left( \frac{1}{T^3} + \frac{1}{T^{ \frac{5}{2}}} \right)
    \lk  \|v_0\|_{\Ltwo}.
  \end{equation*}
\end{lemma}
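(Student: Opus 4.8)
The plan is to mirror the duality argument of Lemma~\ref{lemma:8.2}, now carrying one extra factor of the discrete Laplacian. Writing $\psi_{kh} := R_h v_k - v_{kh} \in \Xkhrtwo$, I would introduce the dual solution $y_{kh} \in \Xkhrtwo$ with final datum $y_{kh}(T) = \Delta_h \psi_{kh}(T)$, i.e. $B(\chi_{kh},y_{kh}) = (\chi_{kh}(T),\Delta_h\psi_{kh}(T))$ for all $\chi_{kh} \in \Xkhrtwo$. Testing with the cut-off function $\chi_{kh} = \Delta_h\tilde\psi_{kh}$, where $\tilde\psi_{kh}$ vanishes on $I_1\cup\dots\cup I_{\tilde m}$ and equals $\psi_{kh}$ afterwards with $\tilde m$ fixed by $\frac T2 \in I_{\tilde m}$, produces $\ltwonorm{\Delta_h\psi_{kh}(T)}^2 = B(\Delta_h\tilde\psi_{kh}, y_{kh})$. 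Using the cut-off identity \eqref{eq: Bilinear tilde}, the self-adjointness of $\Delta_h$ on $V_h^2$ to move the extra Laplacian across the form (so that $B(\Delta_h\psi_{kh},\tilde y_{kh}) = B(\psi_{kh},\Delta_h\tilde y_{kh})$), and the Galerkin orthogonality \eqref{eq: orthogonality fully discrete} to replace $v_{kh}$ by $v_k$, I would arrive at $\ltwonorm{\Delta_h\psi_{kh}(T)}^2 = J_1 + J_2$ with $J_1 = B(R_h v_k - v_k, \Delta_h\tilde y_{kh})$ and the interface term $J_2 = (\Delta_h\psi_{kh,\tilde m}^-, y_{kh,\tilde m}^+)$.

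For the interface term I would proceed as in Lemma~\ref{lemma:8.2}: moving one $\Delta_h$ back onto $y_{kh}$ and inserting $\pm v_{k,\tilde m}^-$ gives $J_2 = J_{21} + J_{22}$, with $J_{21} = ((R_h v_k - v_k)_{\tilde m}^-, \Delta_h y_{kh,\tilde m}^+)$ and, after replacing $v_k$ by $P_h v_k$, $J_{22} = ((P_h v_k - v_{kh})_{\tilde m}^-, \Delta_h y_{kh,\tilde m}^+)$. The first is controlled by the Ritz-smoothing bound \eqref{eq:ritzerror_over_time} together with $\ltwonorm{\Delta_h y_{kh,\tilde m}^+} \le C(T-\ttm)^{-1}\ltwonorm{\Delta_h\psi_{kh}(T)}$ from Lemma~\ref{lemma: higher smoothing} applied to the backward problem, yielding the factor $\ttm^{-3/2}(T-\ttm)^{-1} \approx T^{-5/2}$. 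For the second, the key is to pair the new estimate of Lemma~\ref{lemma:8.1} with the third-order discrete smoothing: writing $J_{22} = (\Delta_h^{-2}(P_h v_k - v_{kh})_{\tilde m}^-, \Delta_h^3 y_{kh,\tilde m}^+)$ and using $\ltwonorm{\Delta_h^3 y_{kh,\tilde m}^+} \le C(T-\ttm)^{-3}\ltwonorm{\Delta_h\psi_{kh}(T)}$ produces the factor $T^{-3}$; the logarithm is tamed by $\ln(\ttm/k) \le 2\lk$ exactly as in Lemma~\ref{lemma:8.2}. Crucially, both $J_{21}$ and $J_{22}$ retain exactly one power of $\ltwonorm{\Delta_h\psi_{kh}(T)}$, which is what will permit the final cancellation.

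The main obstacle is $J_1$. A naive estimate using the dual form \eqref{eq:B_dual} would place the extra Laplacian on the temporal contributions of $y_{kh}$, namely $\pa_t\Delta_h y_{kh}$ and $[\Delta_h y_{kh}]_m$, and force, via Corollary~\ref{cor: maximal parabolic initial in L1}, the datum $\Delta_h^2\psi_{kh}(T)$ --- two powers, which could only be cancelled by an inverse estimate and would destroy the rate. Instead I would expand $J_1$ with the primal form \eqref{eq: bilinear form B}: the gradient contribution vanishes by the definition of the Ritz projection, leaving the time derivative and the jumps acting on the \emph{Ritz error} $R_h v_k - v_k = (R_h - I)v_k$, which commutes with $\pa_t$ and with the jump operator. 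The dual factors then enter only as $\norm{\Delta_h y_{kh}}_{L^1(I;\Ltwo)}$ and $\sum_m k_m\ltwonorm{\Delta_h y_{kh,m}^+}$, both bounded by $C\lk\ltwonorm{\Delta_h\psi_{kh}(T)}$ through Corollary~\ref{cor: maximal parabolic initial in L1} at a single power. It then remains to estimate the Ritz-error factors: for the volume term via $\norm{\pa_t(R_h-I)v_k(t)}_{\Ltwo} \le Ch^3\norm{\pa_t v_k(t)}_{H^3(\Omega)}$, using Lemma~\ref{thm:h3regularity} and the smoothing of Lemma~\ref{lemma: higher smoothing} to obtain the decay $Ch^3 t^{-5/2}$, and for the jump term via $\norm{(R_h-I)[v_k]_{m-1}}_{\Ltwo} \le Ch^3\norm{[v_k]_{m-1}}_{H^3(\Omega)} \le Ch^3 k_m t_m^{-5/2}\ltwonorm{v_0}$, again combining Lemma~\ref{thm:h3regularity} with the jump smoothing of Lemma~\ref{lemma: higher smoothing}. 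Collecting $J_1$, $J_{21}$, $J_{22}$ gives the bound $Ch^3(T^{-3}+T^{-5/2})\lk\ltwonorm{v_0}\,\ltwonorm{\Delta_h\psi_{kh}(T)}$, and cancelling one factor of $\ltwonorm{\Delta_h\psi_{kh}(T)}$ yields the claim; throughout I would use $\frac T4 \le \ttm \le \frac T2$ and $\frac T2 \le T-\ttm \le \frac{3T}4$ to replace all $\ttm$-dependent quantities by powers of $T$.
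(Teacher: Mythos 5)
Your proposal is correct and follows essentially the same route as the paper's own proof: the same dual problem with datum $\Delta_h(R_hv_k-v_{kh})(T)$, the same cutoff at $\tilde m$ and splitting into $J_1+J_{21}+J_{22}$, the same key choice of expanding $J_1$ in the primal form of $B$ so that the time derivative and jumps fall on the Ritz error and only a single power of $\Delta_h$ lands on the dual solution (controlled via Corollary \ref{cor: maximal parabolic initial in L1}), and the same pairing of Lemma \ref{lemma:8.1} with the $\|\Delta_h^3 y_{kh,\tilde m}^+\|_{\Ltwo}$ smoothing bound for $J_{22}$, including the identical treatment of the logarithm. The only blemish is the final parenthetical, where the inequalities for $\ttm$ and $T-\ttm$ are stated with the directions swapped; what you actually need (and implicitly use earlier) is $\ttm \ge \frac{T}{2}$ and $T-\ttm \ge \frac{T}{4}$.
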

\begin{proof}
  Let $y_{kh} \in \Xkhrtwo$ be the solution to a dual problem with
  $y_{kh}(T) = \Delta_h \left( R_h v_k - v_{kh} \right)(T)$, i.e $y_{kh} \in \Xkhrtwo$
  satisfies
  \begin{equation}
    B(\varphi_{kh},y_{kh}) = \left( \varphi_{kh}(T), \Delta_h(R_h v_k - v_{kh})(T) \right)
    \quad \text{for all} \quad \varphi_{kh} \in \Xkhrtwo.
  \end{equation}
  As in the previous lemma, in order to simplify notation,
  we define ${\psi_{kh} := R_h v_k - v_{kh} \in \Xkhrtwo}$.
  We introduce $\tilde{\psi}_{kh} \in \Xkhrtwo$ to be zero on
  $I_1 \cup ... \cup I_{\tilde{m}}$ and $\tilde{\psi}_{kh} = \psi_{kh}$ on
  $I_{\tilde{m}+1} \cup ... I_M$ for $\tilde{m}$ chosen such that
  $ \frac{T}{2} \in I_{\tilde{m}}$. We define $\tilde{y}_{kh}$ analogously.
  Choosing $\Delta_h \tilde{\psi}_{kh}$ as test function in the definition of $y_{kh}$,
  and transfering the cutoff from the first argument of $B$ to the second, by applying
  \eqref{eq: Bilinear tilde}, we get
  \begin{align*}
    \|\Delta_h \left( R_h v_k - v_{kh} \right)(T)\|_{\Ltwo}^2
    &= B(\Delta_h\tilde{\psi}_{kh},y_{kh})\\
    &= B(\tilde{\psi}_{kh},\Delta_hy_{kh})\\
    &= B(\psi_{kh},\Delta_h\tilde{y}_{kh}) + 
      \left( \psi_{kh,\tilde{m}}^-,\Delta_hy_{kh,\tilde{m}}^+\right)\\
    &= B(R_h v_k - v_{kh},\Delta_h\tilde{y}_{kh}) + 
        \left( (R_h v_{k,\tilde{m}} - v_{kh,\tilde{m}})^-,
        \Delta_hy_{kh,\tilde{m}}^+\right)\\
    &=B(R_h v_k - v_{k},\Delta_h\tilde{y}_{kh}) + 
        \left( (R_h v_{k,\tilde{m}} - v_{kh,\tilde{m}})^-,
        \Delta_hy_{kh,\tilde{m}}^+\right)\\
    &= J_1 + J_2.
  \end{align*}
  Here we have used the Galerkin orthogonality \eqref{eq: orthogonality fully discrete}
  with respect to the bilinear form $B$. By the definition of the Ritz projection the terms 
  $\left(\nabla (R_h v_k - v_{k}),
  \nabla \Delta_h\tilde{y}_{kh}\right)_{I_m \times \Omega}$
  vanish from the form $B$, such that the remaining terms of $J_1$ are
  \begin{equation*}
    J_1 = \sum_{m=\tilde{m}+1}^M
    (\partial_t (R_h v_k - v_k),\Delta_h y_{kh})_{I_m\times\Omega}
    + \sum_{m=\tilde{m}+1}^M ([R_h v_k -v_k]_m,\Delta_h y_{kh,m}^+).
  \end{equation*}
  Applying H\"older's inequality in space and time gives
  \begin{align*}
    J_1 \le
    \|\partial_t (R_h v_k - v_k)\|_{L^\infty((t_{\tilde{m}},T);\Ltwo)}
    \|\Delta_h y_{kh}\|_{L^1((t_{\tilde{m}},T);\Ltwo)}
    + \sum_{m=\tilde{m}+1}^M
    \|[R_h v_k -v_k]_m\|_{\Ltwo}
    \|\Delta_h y_{kh,m}^+\|_{\Ltwo}.
  \end{align*}
  Introducing an artificial factor $1$ as $k_m \cdot k_m^{-1}$ in the sum allows us to extract
  the term $ \max_{\tilde{m}\le m \le M} \left\lbrace k_m^{-1} \|[R_h v_k -v_k]_m\|_{\Ltwo} \right\rbrace$
  out of the sum. This gives
  \begin{align*}
    J_1 \le& 
    \max_{\tilde{m}\le m\le M}\|\partial_t (R_h v_k - v_k)\|_{L^\infty(I_m;\Ltwo)}
    \|\Delta_h y_{kh}\|_{L^1((t_{\tilde{m}},T);\Ltwo)}\\
    &+ \max_{\tilde{m}\le m \le M} 
    \left\lbrace k_m^{-1} \|[R_h v_k -v_k]_m\|_{\Ltwo} \right\rbrace
    \left(\sum_{m=\tilde{m}+1}^M k_m \|\Delta_h y_{kh,m}^+\|_{\Ltwo}\right).
  \end{align*}
Using  Corollary \ref{cor: maximal parabolic initial in L1} and  the $L^2$ error estimate for the Ritz projection for the other terms, gives the following estimate,
  \begin{align*}
    J_1 \le C h^3 \left( 
      \max_{\tilde{m}\le m\le M}\|\partial_t v_k\|_{L^\infty(I_m;H^3(\Omega))}
    + \max_{\tilde{m}\le m \le M} 
    \left\lbrace k_m^{-1} \|[v_k]_m\|_{H^3(\Omega))} \right\rbrace\right)
    \lk\|\Delta_h(R_h v_k - v_{kh})(T)\|_{\Ltwo}.
  \end{align*}
  Similar to the previous lemma, by the estimate \eqref{eq:est:H3} of Lemma \ref{thm:h3regularity}
  and Remark \ref{rem:bc_laplacian_semidiscrete}, yielding $\partial_t \Delta v_k(t) \in H^1_0(\Omega)$
  for $t\in (t_{\tilde{m}-1},T]$, we obtain
  \begin{equation*}
    \|\partial_t v_k(t)\|_{H^3(\Omega)} 
    \le C\|\partial_t \Delta v_k(t)\|^{ \frac{1}{2}}_{\Ltwo} 
    \|\partial_t \Delta^2 v_k(t)\|^{ \frac{1}{2}}_{\Ltwo}.
  \end{equation*}
  Taking the supremum over $(t_{\tilde{m}-1},T]$ and using Lemma \ref{lemma: higher smoothing} then yields
  \begin{align*}
      \max_{\tilde{m}\le m\le M}\|\partial_t v_k(t)\|_{L^\infty(I_m;H^3(\Omega))} 
      &\leq
      C
       \max_{\tilde{m}\le m\le M}\left\|\partial_t\Delta v_k(t)
      \right\|^{ \frac{1}{2}}_{L^\infty(I_m;\Ltwo)}
      \left\|\partial_t\Delta^2 v_k(t)
    \right\|^{ \frac{1}{2}}_{L^\infty(I_m;\Ltwo)}\\
      &\leq
      C \dfrac{1}{t_{\tilde{m}}^\frac{5}{2}} \|v_0\|_{\Ltwo} 
      \le C \dfrac{1}{T^\frac{5}{2}} \|v_0\|_{\Ltwo}.
  \end{align*}
  Applying the same arguments, using $\Delta v_k(t) \in H^1_0(\Omega)$ for 
  $t \in (t_{\tilde{m}-1},T)$, and Lemma \ref{lemma: higher smoothing} gives
  \begin{align*}
    \max_{\tilde{m}\le m \le M} 
    \left\lbrace k_m^{-1} \|[v_k]_m\|_{H^3(\Omega)} \right\rbrace
    \le
    \max_{\tilde{m}\le m \le M} 
    \left\lbrace k_m^{-1} \left(\|\Delta[v_k]_m\|_{\Ltwo}\|\Delta^2[v_k]_m\|_{\Ltwo}\right)^{ \frac{1}{2}}
    \right\rbrace
    \leq
    C \dfrac{1}{t_{\tilde{m}}^\frac{5}{2}} \|v_0\|_{\Ltwo} 
    \le C \dfrac{1}{T^\frac{5}{2}} \|v_0\|_{\Ltwo}.
  \end{align*}
  Summarizing all above results yields the final bound for $J_1$:
  \begin{equation*}
    J_1 \le C h^3 \dfrac{1}{T^\frac{5}{2}} \lk \|v_0\|_{\Ltwo} 
    \|\Delta_h(R_h v_k - v_{kh})(T)\|_{\Ltwo}.
  \end{equation*}
  To estimate $J_2$ we insert an artificial zero like before by adding and subtracting
  $v_{k,\tilde{m}}^-$:
  \begin{align*}
    J_2 &= ((R_h v_{k,\tilde{m}} - v_{kh,\tilde{m}})^-,\Delta_hy_{kh,\tilde{m}}^+)=((R_h v_{k,\tilde{m}} - v_{k,\tilde{m}})^-,\Delta_hy_{kh,\tilde{m}}^+) +
    ((v_{k,\tilde{m}} - v_{kh,\tilde{m}})^-,\Delta_hy_{kh,\tilde{m}}^+)
    := J_{21} + J_{22}.
  \end{align*}
  The term $J_{21}$ can be estimated similarly to the previous lemma,
  applying \eqref{eq:ritzerror_over_time}, the discrete smoothing result of Lemma
  \ref{lemma: higher smoothing} for $\|\Delta_h y_{kh,\tilde{m}}^+\|_\Ltwo$ and using the special
  choice of $\tilde{m}$:
  \begin{align*}
    J_{21} &\le \sup_{t\in(t_{\tilde{m}-1},T)}\|(R_h v_k - v_k)(t)\|_{\Ltwo}
    \|\Delta_hy_{kh,\tilde{m}}^+\|_{\Ltwo}\\
    &\le C \dfrac{h^3}{t_{\tilde{m}}^{\frac{3}{2}}} \|v_0\|_{\Ltwo}
    \frac{1}{T - t_{\tilde{m}}} \|y_{kh}(T)\|_{\Ltwo}\\
    &\le C \dfrac{h^3}{T^{ \frac{5}{2}}} \|v_0\|_{\Ltwo}
    \|\Delta_h(R_h v_k - v_{kh})(T)\|_{\Ltwo}.
  \end{align*}
  We estimate $J_{22}$ by replacing $v_{k,\tilde{m}}$ with its $L^2$-projection:
  \begin{align*}
    J_{22} &= ((P_hv_{k,\tilde{m}} - v_{kh,\tilde{m}})^-,\Delta_hy_{kh,\tilde{m}}^+)\\
    &= (\Delta_h^{-2}(P_h v_{k,\tilde{m}} - v_{kh,\tilde{m}})^-,
        \Delta_h^3 y_{kh,\tilde{m}}^+)\\
    &\leq \|\Delta_h^{-2}(P_h v_{k,\tilde{m}} - v_{kh,\tilde{m}})^-\|_{\Ltwo}
        \|\Delta_h^3 y_{kh,\tilde{m}}^+\|_{\Ltwo}\\
    &\leq C \dfrac{h^3}{(T-t_{\tilde{m}})^3}
    \ln \left( \frac{t_{\tilde{m}}}{k}\right)\|v_0\|_{\Ltwo}
    \|\Delta_h(R_h v_k - v_{kh})(T)\|_{\Ltwo}.
  \end{align*}
  In the last step, we have used Lemma \ref{lemma:8.1} for
  $\|\Delta_h^{-2}(P_h v_{k,\tilde{m}} - v_{kh,\tilde{m}})^-\|_{\Ltwo}$
  and the discrete smoothing result of Lemma \ref{lemma: higher smoothing} for 
  $\|\Delta_h^3 y_{kh,\tilde{m}}^+\|_{\Ltwo}$. Since $y_{kh}$ is the solution to a
  problem backward in time, we use $ \frac{1}{T- t_{\tilde{m}}}$ instead of
  $ \frac{1}{t_{\tilde{m}}}$ in the application of this result.
  Analogously to the previous lemma, we can replace the terms involving $t_{\tilde{m}}$
  by ones dependent only of $T$ because of the special choice of $t_{\tilde{m}}$,
  thus giving the final bound for $J_{22}$,
  \begin{equation*}
    J_{22} \leq C \frac{h^3}{T^3}\lk
    \|v_0\|_{\Ltwo} \|\Delta_h(R_h v_k - v_{kh})(T)\|_{\Ltwo}.
  \end{equation*}
  Dividing all considered terms by $\|\Delta_h(R_h v_k - v_{kh})(T)\|_{\Ltwo}$ gives the
  proposed estimate.
\end{proof}
Combining Lemma \ref{lemma:8.2} and Lemma \ref{lemma:8.3}  with the discrete
Gagliardo-Nirenberg inequality 
\eqref{eq: discrete Gagliardo-Nirenber} gives the following result:
\begin{corr}\label{cor: Rhvk-vkh Linfty to L2}
  Let $v_k \in \Xkr$ and $v_{kh} \in \Xkhrtwo$ be the semidiscrete and fully discrete solutions of 
 of \eqref{eq:eq_aux} with  $v_0\in L^2(\Om)$. Then there exists a constant independent of $k$ and $h$ such that
  \begin{equation*}
    \|(R_h v_k - v_{kh})(T)\|_{L^\infty(\Omega)} \le C h^3 \lk
    \left( \frac{1}{T^3} + \frac{1}{T^\frac{5}{2}} \right)^\frac{N}{4}
    \left( \frac{1}{T^2} + \frac{1}{T^\frac{3}{2}} \right)^{\left(1-\frac{N}{4}\right)}
    \|v_0\|_{\Ltwo}.
  \end{equation*}
\end{corr}
This result now allows us to estimate the final term $T_3$ of \eqref{eq:err_split_quadFE} and thus 
proves Theorem \ref{thm:smoothing_est_space_p2}.

\subsection{Estimates for $(v-v_{kh})(T)$ with $v_0\in \M(\Om)$}\label{subsec: measure}

Now that we have established Theorem \ref{thm:smoothing_est_space_p2} for $v_0\in L^2(\Omega)$,
following exactly the proof of Theorem \ref{thm:fully discrete from Linfty_to_measure},
and using the Assumption \ref{assump:regularityH3},
we can establish 

\begin{theorem}\label{thm:smoothing_est_space_p2_measure}
Let $\Omega_0$ be a subdomain with $\overline \Omega_0 \subset \Omega$,
$v_0\in \M(\Om)$ with $\supp v_0 \subset \Omega_0$. 
Let $v$ and $v_{kh} \in \Xkhrtwo$ satisfy \eqref{eq:eq_aux} and 
\eqref{eq:fully discrete heat}, respectively. In addition, let $\Omega$ be such that 
Assumption \ref{assump:regularityH3} holds.
\begin{equation}\label{eq:h3_regularity}
    \|(v-v_{kh})(T)\|_{L^\infty(\Om_0)} \le C(T,\Om_0)\left( \ell_k h^3 + k^{2r+1}\right)\|v_0\|_{\M(\Om)},
\end{equation}
where $\ell_{k} = \lk$, $d=\dist(\Om_0,\partial \Om)$
and $C(T,d)$ is a constant depending on $T$ and $d$.
\end{theorem}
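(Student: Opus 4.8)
The plan is to follow the proof of Theorem~\ref{thm:fully discrete from Linfty_to_measure} closely, replacing each linear-element ingredient by its quadratic counterpart from this section and treating the time-semidiscrete error separately. I would first split
\begin{equation*}
  (v-v_{kh})(T) = (v-v_k)(T) + (v_k-v_{kh})(T).
\end{equation*}
The first summand is the pure time-discretization error, already estimated by Lemma~\ref{thm:smoothing_est_time}, which contributes $C(T)k^{2r+1}\mnorm{v_0}$ and is global in $\Omega$. It therefore remains to bound the spatial error $(v_k-v_{kh})(T)$ by $C(T,\Om_0)\ell_k h^3\mnorm{v_0}$, and this is where the support condition $\supp v_0\subset\Om_0$ and Assumption~\ref{assump:regularityH3} enter.

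For the spatial error I would reuse the auxiliary-state construction: choose $\tilde m$ with $\frac{T}{2}\in I_{\tilde m}$ and let $\hat v_{kh}\in\Xkhrtwo$ be the fully discrete solution that vanishes on $I_1\cup\dots\cup I_{\tilde m-1}$ and solves the homogeneous discrete problem on $I_{\tilde m}\cup\dots\cup I_M$ with the $L^2$ datum $v_{k,\tilde m-1}^-$ at $t_{\tilde m-1}$. By the triangle inequality
\begin{equation*}
  \|(v_k-v_{kh})(T)\|_{L^\infty(\Om_0)}
  \le \|(v_k-\hat v_{kh})(T)\|_{L^\infty(\Om_0)}
  + \|(\hat v_{kh}-v_{kh})(T)\|_{L^\infty(\Om_0)}.
\end{equation*}
On $I_{\tilde m},\dots,I_M$ the function $v_k$ is the semidiscrete solution for the $L^2$ datum $v_{k,\tilde m-1}^-$, so $(v_k-\hat v_{kh})(T)$ is precisely the semidiscrete-minus-fully-discrete spatial error for $L^2$ data; its $L^\infty(\Om_0)$ norm is controlled by the $h^3$-part of Theorem~\ref{thm:smoothing_est_space_p2}, giving $C(T-t_{\tilde m-1},\Om_0)\ell_k h^3\ltwonorm{v_{k,\tilde m-1}^-}$, and then the measure-data smoothing bound $\ltwonorm{v_{k,\tilde m-1}^-}\le C t_{\tilde m-1}^{-N/4}\mnorm{v_0}$ of Lemma~\ref{lemma: smoothing 2} finishes this term. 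Since $\frac{T}{4}\le t_{\tilde m-1}\le\frac{T}{2}$ and $\frac{T}{2}\le T-t_{\tilde m-1}$, every intermediate-time factor collapses into a constant depending only on $T$ and $\Om_0$.

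For the second term, $\hat v_{kh}-v_{kh}$ solves a homogeneous fully discrete problem on $I_{\tilde m},\dots,I_M$ with datum $v_{k,\tilde m-1}^- - v_{kh,\tilde m-1}^-$, so the discrete Gagliardo--Nirenberg inequality~\eqref{eq: discrete Gagliardo-Nirenber} combined with the fully discrete smoothing of Lemmas~\ref{lemma: homogeneous smoothing dG_r fully discrete} and~\ref{lemma: higher smoothing} bounds it by a negative power of $T-t_{\tilde m-1}$ times the intermediate-time $L^2$ error $\|(v_k-v_{kh})(t_{\tilde m-1})\|_{\Ltwo}$. This is the step I expect to be the main obstacle: it needs a quadratic-element analogue of Lemma~\ref{lemm:L2error_measuredata}, i.e.
\begin{equation*}
  \|(v_k-v_{kh})(T)\|_{\Ltwo}\le C(T,\Om_0)\,\ell_k h^3\,\mnorm{v_0}
  \qquad\text{under }\supp v_0\subset\Om_0.
\end{equation*}
A second naive auxiliary split does not help here, since it only reproduces the same intermediate $L^2$ error at an earlier time, where the measure-smoothing constants degenerate.

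I would instead establish this $L^2$ measure-data estimate directly by duality, exactly as its linear counterpart was obtained in~\cite{LeyVexWal19}. Posing the (backward) dual fully discrete problem with datum $(v_k-v_{kh})(T)\in L^2(\Om)$ and using the Galerkin orthogonalities~\eqref{eq: orthogonality semidiscrete},~\eqref{eq: orthogonality fully discrete} together with~\eqref{eq:fully discrete heat}, one rewrites $\|(v_k-v_{kh})(T)\|_{\Ltwo}^2$ as an $L^2$-projection remainder plus the pairing $\langle v_0,(z_k-z_{kh})(0)\rangle$ of $v_0$ with the initial value of the semidiscrete-minus-fully-discrete dual error. Because $\supp v_0\subset\Om_0$, this pairing is bounded by $\mnorm{v_0}\,\|(z_k-z_{kh})(0)\|_{L^\infty(\Om_0)}$, and by time reversal the interior $L^\infty$ estimate of Theorem~\ref{thm:smoothing_est_space_p2} applies to the dual problem and gives $\|(z_k-z_{kh})(0)\|_{L^\infty(\Om_0)}\le C\ell_k h^3\ltwonorm{(v_k-v_{kh})(T)}$; the projection remainder is handled globally by the $L^2$ Ritz/projection error together with the measure-data smoothing of Lemma~\ref{lemma: higher smoothing 2} and Lemma~\ref{thm:h3regularity}. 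Cancelling one factor of $\|(v_k-v_{kh})(T)\|_{\Ltwo}$ yields the claim, with Lemmas~\ref{lemma:8.2} and~\ref{lemma:8.3} supplying the auxiliary $L^2$- and $\Delta_h$-weighted bounds exactly as in the linear case. Collecting all contributions produces $C(T,\Om_0)(\ell_k h^3+k^{2r+1})\mnorm{v_0}$, as asserted.
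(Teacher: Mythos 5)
Your proposal is correct and takes essentially the same route as the paper, whose entire proof of Theorem \ref{thm:smoothing_est_space_p2_measure} is the statement that one repeats the auxiliary-state/triangle-inequality argument of Theorem \ref{thm:fully discrete from Linfty_to_measure}, substituting the quadratic-element ingredients of Section \ref{sec: higher order} (Theorem \ref{thm:smoothing_est_space_p2} in place of Proposition \ref{prop:fully discrete from Linfty_to_L2}) together with the smoothing results and the $L^2$ measure-data estimate. Your explicit duality argument for the $h^3$ analogue of Lemma \ref{lemm:L2error_measuredata} supplies exactly the ingredient the paper leaves implicit (that lemma is stated only for linear elements, the quadratic version being inherited from the same argument in \cite{LeyVexWal19} with Theorem \ref{thm:smoothing_est_space_p2} applied to the time-reversed dual problem), and it is the right way to close that step.
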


\bibliographystyle{siam}
\bibliography{sources}

\begin{thebibliography}{10}

\bibitem{AdamsFournier:1977}
{\sc R.~A. Adams and J.~Fournier}, {\em Cone conditions and properties of
  {S}obolev spaces}, J. Math. Anal. Appl., 61 (1977), pp.~713--734.

\bibitem{Adams_Fournier_2005}
{\sc R.~A. Adams and J.~J.~F. Fournier}, {\em Sobolev spaces}, vol.~140 of Pure
  and Applied Mathematics (Amsterdam), Elsevier/Academic Press, Amsterdam,
  second~ed., 2003.

\bibitem{ashyralyev_wellposedness_1994}
{\sc A.~Ashyralyev and P.~E. Sobolevskii}, {\em Well-Posedness of Parabolic
  Difference Equations}, Birkhäuser Basel, 1994.

\bibitem{BrennerScott}
{\sc S.~C. Brenner and L.~R. Scott}, {\em The mathematical theory of finite
  element methods}, vol.~15 of Texts in Applied Mathematics, Springer, New
  York, third~ed., 2008.

\bibitem{CasasE_VexlerB_ZuazuaE_2015a}
{\sc E.~Casas, B.~Vexler, and E.~Zuazua}, {\em Sparse initial data
  identification for parabolic {PDE} and its finite element approximations},
  Math. Control Relat. Fields, 5 (2015), pp.~377--399.

\bibitem{drelichman_weighted_2020}
{\sc I.~Drelichman, R.~G. Durán, and I.~Ojea}, {\em A weighted setting for the
  numerical approximation of the poisson problem with singular sources}, {SIAM}
  J. Numer. Anal., 58 (2020), pp.~590--606.

\bibitem{ErikssonK_JohnsonC_LarssonS_1998a}
{\sc K.~Eriksson, C.~Johnson, and S.~Larsson}, {\em Adaptive finite element
  methods for parabolic problems. {VI}. {A}nalytic semigroups}, SIAM J. Numer.
  Anal., 35 (1998), pp.~1315--1325.

\bibitem{evans_partial_2010}
{\sc L.~C. Evans}, {\em Partial differential equations}, no.~19 in Graduate
  studies in mathematics, American Mathematical Society, second~ed., 2010.

\bibitem{HansboA_2002a}
{\sc A.~Hansbo}, {\em Strong stability and non-smooth data error estimates for
  discretizations of linear parabolic problems}, BIT, 42 (2002), pp.~351--379.

\bibitem{HellOstermannSandbichler:2015}
{\sc T.~Hell, A.~Ostermann, and M.~Sandbichler}, {\em Modification of
  dimension-splitting methods---overcoming the order reduction due to corner
  singularities}, IMA J. Numer. Anal., 35 (2015), pp.~1078--1091.

\bibitem{lesaint_finite_1974}
{\sc P.~Lesaint and P.~Raviart}, {\em On a finite element method for solving
  the neutron transport equation}, in Mathematical Aspects of Finite Elements
  in Partial Differential Equations, Academic Press, pp.~89--123.

\bibitem{LeykekhmanD_VexlerB_2016a}
{\sc D.~Leykekhman and B.~Vexler}, {\em Pointwise best approximation results
  for {G}alerkin finite element solutions of parabolic problems}, SIAM J.
  Numer. Anal., 54 (2016), pp.~1365--1384.

\bibitem{LeykekhmanD_VexlerB_2017a}
\leavevmode\vrule height 2pt depth -1.6pt width 23pt, {\em Discrete maximal
  parabolic regularity for {G}alerkin finite element methods}, Numer. Math.,
  135 (2017), pp.~923--952.

\bibitem{LeyVexWagl_2023}
{\sc D.~Leykekhman, B.~Vexler, and J.~Wagner}, {\em Numerical analysis of
  sparse initial data identification for parabolic problems with pointwise
  final time observations}, in preparation,  (2023).

\bibitem{LeyVexWal19}
{\sc D.~Leykekhman, B.~Vexler, and D.~Walter}, {\em Numerical analysis of
  sparse initial data identification for parabolic problems}, ESAIM Math.
  Model. Numer. Anal., 54 (2020), pp.~1139--1180.

\bibitem{DMeidner_RRannacher_BVexler_2011a}
{\sc D.~Meidner, R.~Rannacher, and B.~Vexler}, {\em A priori error estimates
  for finite element discretizations of parabolic optimization problems with
  pointwise state constraints in time}, SIAM J. Control Optim., 49 (2011),
  pp.~1961--1997.

\bibitem{saff_zeros_1975}
{\sc E.~B. Saff and R.~S. Varga}, {\em On the zeros and poles of padé
  approximants to e{\textasciicircum}z}, Numerische Mathematik, 25 (1975),
  pp.~1--14.

\bibitem{AHSchatz_VThomee_LBWahlbin_1980a}
{\sc A.~H. Schatz, V.~C. Thom{\'e}e, and L.~B. Wahlbin}, {\em Maximum norm
  stability and error estimates in parabolic finite element equations}, Comm.
  Pure Appl. Math., 33 (1980), pp.~265--304.

\bibitem{SchatzWahlbin_1977}
{\sc A.~H. Schatz and L.~B. Wahlbin}, {\em Interior maximum norm estimates for
  finite element methods}, Math. Comp., 31 (1977), pp.~414--442.

\bibitem{ThomeeV_2006}
{\sc V.~Thom\'{e}e}, {\em Galerkin finite element methods for parabolic
  problems}, vol.~25 of Springer Series in Computational Mathematics,
  Springer-Verlag, Berlin, second~ed., 2006.

\bibitem{wanner_order_1978}
{\sc G.~Wanner, E.~Hairer, and S.~P. Nørsett}, {\em Order stars and stability
  theorems}, {BIT}, 18 (1978), pp.~475--489.

\end{thebibliography}


%
\end{document}